\newtheorem{thm}{Theorem}[section] 
\newtheorem{conj}[thm]{Conjecture}
\newtheorem{rmk}[thm]{Remark}
\newtheorem{cor}[thm]{Corollary}
\newtheorem{lem}[thm]{Lemma}
\newtheorem{prop}[thm]{Proposition}
\newtheorem{defn}[thm]{Definition}
\sloppy  \allowdisplaybreaks[4]
\begin{document}
	\title{Accessibility and Ergodicity of Partially Hyperbolic Diffeomorphisms without Periodic Points}
	\author{Ziqiang Feng\thanks{Department of Mathematics, Southern University of Science and Technology, Shenzhen, 518000 and Beijing International Center for Mathematical Research, Peking University, Beijing, 100871, China. 12031293@mail.sustech.edu.cn; zqfeng@pku.edu.cn}, Ra\'{u}l Ures\thanks{Department of Mathematics and Shenzhen International Center for Mathematics, Southern University of Science and Technology, Shenzhen, 518000, China. ures@sustech.edu.cn}\thanks{This work was partially supported by National Key R\&D Program of China 2022YFA1005801, NSFC 12071202, and NSFC 12161141002.}}

	\maketitle
	

	\begin{abstract}

		We prove that every $C^2$ conservative partially hyperbolic diffeomorphism of a closed 3-manifold without periodic points is ergodic, which gives an affirmative answer to the Ergodicity Conjecture by Hertz-Hertz-Ures \cite{2008nil} in the absence of periodic points. We also show that a partially hyperbolic diffeomorphism of a closed 3-manifold $M$ with no periodic points is accessible if the non-wandering set is all of $M$ and the fundamental group $\pi_1(M)$ is not virtually solvable.

	\end{abstract}

	{\bf Keywords}: Partial hyperbolicity, accessibility, ergodicity, foliations.
	
	{\bf MSC}: 37A25; 37C25; 37C40; 37C86; 37D30; 57R30.

\tableofcontents


\section{Introduction}

In 2008, Hertz, Hertz, and Ures \cite{2008nil} introduced a conjecture that has since played a significant role in the field of partially hyperbolic dynamics. They posited that any $ C^r $ (with $ r > 1 $) conservative partially hyperbolic diffeomorphism on a closed 3-manifold is ergodic, provided there is no embedded 2-torus tangent to the $ E^s \oplus E^u $ bundle (the direct sum of the stable and unstable distributions). In this work, we focus on this conjecture, specifically for the class of partially hyperbolic diffeomorphisms that lack periodic points.

The concept of \textbf{ergodicity}, introduced by Boltzmann in the late 19th century in the context of thermodynamics, was originally formulated to describe the statistical behavior of gas particles. Over time, it evolved into a fundamental property in dynamical systems, characterizing the equidistribution of typical orbits. Formally, a dynamical system $ f $ is said to be ergodic with respect to an invariant measure $ \mu $ if, for every continuous function $ \phi: M \to \mathbb{R} $, the time average of $ \phi $ along the orbit of $ \mu $-almost every point $ x $ equals its spatial average:
$$
\lim_{n \to \infty} \frac{1}{n} \sum_{i=0}^{n-1} \phi(f^i(x)) = \int_M \phi \, d\mu \quad \mu\text{-a.e. } x.
$$
Oxtoby and Ulam \cite{OU1941} demonstrated that ergodicity is a generic property for volume-preserving homeomorphisms. However, extending this genericity to smooth systems proved impossible with the advent of \textbf{KAM theory} \cite{KAM54, Arnold65, Moser62}. KAM theory reveals that, in smooth integrable systems of elliptic type, invariant tori with positive but not full volume persist under small perturbations. This persistence creates open sets of volume-preserving diffeomorphisms that are not ergodic, providing examples of \textbf{stably non-ergodic systems}.

In \cite{Hopf1939}, Hopf introduced a technique known as the \emph{Hopf argument}, which was used to establish the ergodicity of the geodesic flow on a class of surfaces with negative curvature. Later, Anosov and Sinai \cite{Anosov1967, AnosovSinai67} refined and extended the Hopf argument to prove the ergodicity of all $C^2$ volume-preserving uniformly hyperbolic systems, including geodesic flows on compact manifolds of negative curvature.  

A natural generalization of uniform hyperbolicity, where the Hopf argument might be extended, is the concept of partial hyperbolicity. Partial hyperbolicity was independently introduced by Brin and Pesin \cite{BrinPesin74} in the context of skew products and frame flows, and by Pugh and Shub \cite{PughShub72} in their study of Anosov actions.  

Let $M$ be a closed Riemannian manifold. A diffeomorphism $f: M \rightarrow M$ is said to be (pointwise) \emph{partially hyperbolic} if the tangent bundle $TM$ admits a non-trivial $Df$-invariant splitting $TM = E^s \oplus E^c \oplus E^u$ such that  
\begin{equation*}
	\|Df(x)v^s\| < 1 < \|Df(x)v^u\| 
	\quad \text{and} \quad
	\|Df(x)v^s\| < \|Df(x)v^c\| < \|Df(x)v^u\|
\end{equation*}
holds for every $x \in M$ and unit vectors $v^\sigma \in E^\sigma_x$ ($\sigma = s, c, u$).  
Here, the stable bundle $E^s$ is contracting, the unstable bundle $E^u$ is expanding, and the intermediate bundle $E^c$, called the center bundle, is neither as expanded as the unstable bundle nor as contracted as the stable one.

Following the foundational work of \cite{GPS94}, Pugh and Shub proposed their Stable Ergodicity Conjecture, which states that ``stable ergodicity is dense among conservative partially hyperbolic diffeomorphisms.'' Their conjecture was divided into two parts: (1) (essential) accessibility implies ergodicity for a $C^2$ conservative partially hyperbolic diffeomorphism; and (2) accessibility holds in an open and dense set of partially hyperbolic diffeomorphisms.  

Significant progress has been made in understanding stable ergodicity for partially hyperbolic diffeomorphisms. In the case where the center bundle $E^c$ is one-dimensional, the Pugh-Shub conjecture was resolved by Hertz, Hertz, and Ures \cite{08invent}. Part (1) of the conjecture, which asserts that (essential) accessibility implies ergodicity, was proved by Burns and Wilkinson \cite{BW10annals} under a technical bunching condition; this condition is automatically satisfied when the center bundle is one-dimensional. Later, Avila, Crovisier, and Wilkinson \cite{ACW21} established the $C^1$ denseness of stably ergodic partially hyperbolic systems. For a comprehensive overview of the Pugh-Shub conjecture and related topics in partially hyperbolic dynamics, we refer the reader to the survey article \cite{2018survey}.

As articulated in the Pugh-Shub conjecture, ergodicity is a prevalent property among volume-preserving partially hyperbolic diffeomorphisms. This paper is motivated by the following conjecture, which provides a complete characterization of manifolds admitting non-ergodic dynamics:  

\begin{conj}[Weak Ergodicity Conjecture]\cite{2008nil}
	Let $f: M^3 \rightarrow M^3$ be a $C^r$ ($r > 1$) non-ergodic partially hyperbolic diffeomorphism on an orientable closed 3-manifold. Then $M^3$ must be one of the following:  
	
	(1) the 3-torus $\mathbb{T}^3$;  
	
	(2) the mapping torus of $-id: \mathbb{T}^2 \rightarrow \mathbb{T}^2$;  
	
	(3) the mapping tori of Anosov diffeomorphisms of $\mathbb{T}^2$.  
\end{conj}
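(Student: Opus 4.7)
My plan is to proceed by contrapositive: assume $f: M^3 \to M^3$ is a $C^r$, $r>1$, conservative partially hyperbolic diffeomorphism that is non-ergodic, and derive that $M^3$ must be one of the three listed manifolds. The overarching strategy is to combine the main theorem stated in this paper's abstract with the structure theory for partially hyperbolic dynamics on closed $3$-manifolds and with the classification of manifolds admitting an embedded $2$-torus tangent to $E^s\oplus E^u$.

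The first step is a reduction. By the main theorem, any $C^2$ conservative partially hyperbolic diffeomorphism without periodic points on a closed $3$-manifold is ergodic; hence if $f$ is non-ergodic, $f$ must admit a periodic point $p$. After replacing $f$ by an appropriate iterate, $p$ is fixed and provides a uniformly hyperbolic saddle interacting non-trivially with $E^c$, which I plan to use as an anchor for extracting invariant geometric objects from the dynamics.

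The core of the argument, and the main obstacle, is to manufacture from the failure of ergodicity an embedded $2$-torus $T\subset M$ tangent to $E^s\oplus E^u$. Since $\dim E^c=1$, the Burns--Wilkinson and Hertz--Hertz--Ures ergodicity criterion recalled in the introduction implies that non-ergodicity forces essential non-accessibility, so there is a measurable $f$-invariant union of non-trivial accessibility classes of positive but not full volume. Combining Hopf-type arguments with global product structure and the leaf conjugacy techniques of Hammerlindl--Potrie, I would attempt to promote the topological boundary of such a set to a closed embedded $su$-lamination. The periodic point $p$ should then allow me to locate a compact leaf of this lamination; using minimality of the strong stable and strong unstable foliations when restricted to any $f$-invariant compact surface tangent to $E^s\oplus E^u$, the lamination should close up into an $f$-periodic $2$-torus tangent to $E^s\oplus E^u$.

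Once an embedded $su$-torus is produced, the remaining step is to invoke the classification of $3$-manifolds supporting a partially hyperbolic diffeomorphism with a periodic embedded torus tangent to $E^s\oplus E^u$, due essentially to Hertz--Hertz--Ures (with refinements by Hammerlindl--Potrie); this forces $M^3$ to be $\mathbb{T}^3$, the mapping torus of $-\mathrm{Id}:\mathbb{T}^2\to\mathbb{T}^2$, or a mapping torus of an Anosov diffeomorphism on $\mathbb{T}^2$, which completes the contrapositive. The hard part is clearly the construction of the $su$-torus in the presence of periodic points: the accessibility theorem in this paper handles only the case without periodic points and with $\pi_1(M)$ not virtually solvable, so the central challenge is to extend the $su$-torus construction into the periodic-point regime, and simultaneously to treat configurations where $\pi_1(M)$ is virtually solvable and the model dynamics already impose delicate $su$-structures that must be ruled out or matched against the three listed manifolds.
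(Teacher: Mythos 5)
This statement is a \emph{conjecture} (the Weak Ergodicity Conjecture of Hertz--Hertz--Ures), and the paper does not prove it; it only establishes the special case in which $f$ has no periodic points, where the conclusion holds vacuously because such $f$ are shown to be ergodic (Theorem \ref{ergodic}). Your proposal therefore cannot be compared to a proof in the paper, and it is not itself a proof: after the (correct) reduction to the periodic-point case via Theorem \ref{ergodic}, the entire remaining content --- producing an embedded $2$-torus tangent to $E^s\oplus E^u$ from non-ergodicity --- is exactly the Strong Ergodicity Conjecture, which is open. You acknowledge this yourself (``the hard part is clearly the construction of the $su$-torus in the presence of periodic points''), so the proposal is a strategy outline with its central step missing, not an argument.

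Beyond the gap, one concrete step in your outline is wrong. You claim that the closed $su$-lamination arising from non-accessibility, together with a periodic point, ``should close up into an $f$-periodic $2$-torus tangent to $E^s\oplus E^u$.'' The structure theorem quoted in the paper (Theorem \ref{1su-foliation}, from \cite{2008nil}) says that in the intermediate case the invariant lamination $\Gamma(f)$ extends to a foliation \emph{without compact leaves}, and its boundary leaves are periodic with Anosov dynamics and dense periodic points --- they are non-compact. So the presence of a periodic point does not force a compact leaf, and the lamination need not close up into a torus. Note also that non-ergodicity only gives \emph{essential} non-accessibility (a measurable $su$-saturated set of intermediate volume), which is weaker than having $\Gamma(f)\neq\emptyset$ with the topological lamination structure of Proposition \ref{jointint}; bridging measurable and topological non-accessibility is itself a nontrivial issue you do not address. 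Your final step --- that an $su$-torus forces $M$ to be one of the three listed manifolds, by Theorem \ref{maptori} --- is correct and is precisely why the Strong conjecture implies the Weak one, but everything before it remains unproven.
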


This conjecture establishes a connection between dynamical properties and the topology of the ambient manifold, suggesting that the manifold's structure fundamentally influences whether a conservative partially hyperbolic diffeomorphism exhibits ergodic behavior.  

Hertz, Hertz, and Ures further proposed a stronger version of their ergodicity conjecture:  

\begin{conj}[Strong Ergodicity Conjecture]\cite{2008nil}
	If a $C^r$ ($r > 1$) conservative partially hyperbolic diffeomorphism on a 3-manifold is non-ergodic, then there exists an embedded 2-torus tangent to $E^s \oplus E^u$.  
\end{conj}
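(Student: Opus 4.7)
The plan is to reduce the Strong Ergodicity Conjecture to the theorem proved in this paper (the no-periodic-points case) by controlling the contribution of periodic points through the Hertz-Hertz-Ures theory of accessibility classes on closed 3-manifolds.

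First, I would reduce to the case when $f$ is not essentially accessible. Any partially hyperbolic diffeomorphism of a 3-manifold has $\dim E^c = 1$, so by the Burns-Wilkinson theorem (which needs only center bunching, automatic in the one-dimensional center case), essential accessibility implies ergodicity for $C^2$ conservative $f$. Thus any non-ergodic counterexample must fail essential accessibility, and in particular must fail accessibility.

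Second, I would invoke the Hertz-Hertz-Ures structure theorem for non-open accessibility classes on a 3-manifold: the union $\Lambda$ of all such classes is an $f$-invariant lamination whose leaves are $C^1$ surfaces tangent to $E^s \oplus E^u$. The induced dynamics on any compact leaf of $\Lambda$ carries an Anosov splitting from the ambient $(E^s,E^u)$, so by Franks and Newhouse each compact leaf must be a 2-torus or Klein bottle; after passing to the orientation double cover and lifting $f$ if necessary, at least one such leaf is a 2-torus, which is precisely the invariant set tangent to $E^s\oplus E^u$ demanded by the conjecture.

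The main obstacle will be producing at least one compact leaf of $\Lambda$ when periodic points are present. Note that if $\mathrm{Per}(f)$ were empty the main theorem of this paper would force ergodicity, contradicting the non-ergodicity assumption; so $\mathrm{Per}(f)\neq\emptyset$, and the delicate question is whether any periodic point of $f$ meets a leaf of $\Lambda$. A priori, non-compact leaves of $\Lambda$ could accumulate on a complicated invariant set while every periodic orbit sits inside an open accessibility class. My strategy would be to combine a Lefschetz-type index count over periodic orbits with the leaf-conjugacy classification of partially hyperbolic 3-manifolds (dynamical coherence, branching-foliation trichotomy) to show that the accumulation behavior of $\Lambda$ forces some periodic saddle to lie on a leaf $L$, at which point the $f$-invariance and recurrence of $L$ compel it to be compact and hence a torus. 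On the complement of $\overline{\mathrm{Per}(f)}$ one would then apply the no-periodic-points technology of the present paper to rule out pathological non-compact leaves. Pushing this periodic-point/lamination interaction to a full proof is, in my view, the essential remaining difficulty; the remainder of the argument is a direct application of results already in the literature or established in this paper.
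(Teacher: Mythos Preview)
The statement you are attempting to prove is a \emph{conjecture}, not a theorem: the paper states it as the Strong Ergodicity Conjecture and does not claim to prove it in general. What the paper actually establishes is the special case with no periodic points (Theorem~\ref{ergodic}). There is therefore no ``paper's own proof'' to compare against; the full statement remains open.

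Your outline correctly isolates the structure of the problem. The first reduction (non-ergodic $\Rightarrow$ not accessible, via \cite{08invent,BW10annals}) is sound, and your use of the lamination $\Gamma(f)$ of non-open accessibility classes is the right object. You also correctly observe that the paper's main theorem disposes of the case $\mathrm{Per}(f)=\emptyset$, so one may assume periodic points exist. But the step you flag as ``the essential remaining difficulty'' is exactly the content of the open conjecture: showing that $\Gamma(f)$ contains a \emph{compact} leaf. In the trichotomy of Theorem~\ref{1su-foliation}, Case~2 produces boundary leaves that are $f$-periodic with Anosov-like dynamics and dense periodic points, but these leaves need not be compact; and Case~3 yields a foliation with no compact leaves at all. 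Nothing in the known structure theory forces a compact $su$-leaf once periodic points are allowed.

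The mechanisms you suggest for closing this gap---a Lefschetz-type index argument, the branching-foliation trichotomy, or ``recurrence of $L$ compels it to be compact''---are not developed enough to carry weight. A periodic point lying on a leaf $L$ of $\Gamma(f)$ does not make $L$ compact; non-compact $f$-periodic $su$-leaves supporting hyperbolic periodic points are precisely what Case~2 of Theorem~\ref{1su-foliation} describes. Likewise, the leaf-conjugacy classification results you allude to require hypotheses (solvable $\pi_1$, homotopy class constraints, etc.) that are not available in the general setting of the conjecture. In short, your proposal is an accurate diagnosis of where the problem lies, but it is a research programme rather than a proof, and the paper makes no claim to resolve it beyond the no-periodic-points case.
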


The Strong Ergodicity Conjecture posits that the sole obstruction to ergodicity is the presence of a proper compact accessibility class. As the nomenclature implies, this version is stronger than the Weak Conjecture, a conclusion supported by the result in \cite{2011TORI} (see Theorem \ref{maptori}).

Over the past few decades, significant effort has been devoted to characterizing ambient manifolds in order to verify the conjectures above. In \cite{2008nil}, the authors confirmed the Weak Ergodicity Conjecture for manifolds with (virtually) solvable fundamental groups. Notably, they identified the first class of manifolds where all conservative partially hyperbolic diffeomorphisms are ergodic. Building on results from \cite{HamU14CCM}, Gan and Shi \cite{GS20DA} resolved the Strong Ergodicity Conjecture for manifolds with (virtually) solvable $\pi_1(M)$. Consequently, non-solvable manifolds have garnered significant attention in the pursuit of verifying the Ergodicity Conjecture. Hammerlindl, Hertz and Ures \cite{2020Seifert} validated the conjecture for conservative partially hyperbolic diffeomorphisms isotopic to the identity on Seifert manifolds. Recent work by Fenley and Potrie demonstrated that the isotopy-class constraint in \cite{2020Seifert} can be removed if the manifold has a hyperbolic base and the induced base action of the diffeomorphism is pseudo-Anosov \cite{FP21accessible,FP-gafa}.  Furthermore, they established ergodicity for partially hyperbolic diffeomorphisms on closed hyperbolic 3-manifolds \cite{FP_hyperbolic}. 

As articulated in the Pugh-Shub conjecture, the set of conservative non-ergodic partially hyperbolic diffeomorphisms is meager. A central challenge for the Hertz-Hertz-Ures Ergodicity Conjecture lies in refining this meager set to well-understood cases by restricting both the dynamical systems and the ambient manifolds.

In this paper, we affirm the Hertz-Hertz-Ures Strong Ergodicity Conjecture for partially hyperbolic diffeomorphisms without periodic points. This result is more powerful, as it precludes all non-ergodic scenarios—even those admitting an embedded $su$-torus.

\begin{thm}\label{ergodic}
	Any $C^2$ conservative partially hyperbolic diffeomorphism on a closed 3-dimensional manifold with no periodic points is ergodic.
\end{thm}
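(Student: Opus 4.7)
My plan is to split the argument according to the fundamental group of $M$ and to call on two very different toolkits in each branch, with both branches ultimately feeding the Burns--Wilkinson/Hertz--Hertz--Ures ergodicity criterion for one-dimensional center (the former only indirectly, through the solvable case of the Strong Ergodicity Conjecture). Concretely, I will prove the statement separately under (i) $\pi_1(M)$ virtually solvable, and (ii) $\pi_1(M)$ not virtually solvable.

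For case (i), suppose for contradiction that $f$ is not ergodic. The Strong Ergodicity Conjecture in the solvable setting, proved by Gan and Shi with input from Hammerlindl--Ures, yields an embedded $2$-torus $T\subset M$ tangent to $E^s\oplus E^u$. Since $E^s\oplus E^u$ is $Df$-invariant, $f$ sends $su$-tori to $su$-tori; and by the structure theorem of Hertz--Hertz--Ures on $su$-tori, the collection of $su$-tori in $M$ is a pairwise disjoint, incompressible, $f$-invariant finite family. Therefore some iterate $f^N$ fixes $T$, and $f^N|_T$ is a homeomorphism of $\mathbb{T}^2$ preserving two transverse one-dimensional foliations, one expanding and one contracting in the induced metric. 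Hence $f^N|_T$ is an Anosov diffeomorphism of $\mathbb{T}^2$, whose periodic points are dense; this produces periodic points of $f$, contradicting the hypothesis. For case (ii), since $f$ is conservative, Poincar\'e recurrence gives $\Omega(f)=M$. The accessibility theorem announced in the abstract (the paper's second main result) then applies and yields that $f$ is accessible. Because $\dim E^c=1$ and $f$ is $C^2$, the Hertz--Hertz--Ures/Burns--Wilkinson theorem promotes accessibility to ergodicity, finishing this case.

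The main obstacle in this strategy is not Theorem \ref{ergodic} itself, which follows relatively cleanly from the dichotomy above, but the accessibility statement it relies on in case (ii): one must show that the absence of periodic points, together with $\Omega(f)=M$ and non-solvability of $\pi_1(M)$, precludes a non-open accessibility class. I expect this will require a careful analysis of the possible non-open accessibility classes and their structure as $su$-laminations in the branching-foliation framework of Fenley--Potrie, together with the observation that minimal compact invariant sets transverse to the center direction tend to force periodic center behavior (and hence periodic points). The only subtlety on the solvable side is the finiteness and invariant permutation of the collection of $su$-tori, both of which follow from incompressibility and the HHU classification, so that branch of the argument should be short once the existence of an $su$-torus is granted by Gan--Shi.
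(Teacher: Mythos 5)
Your case (ii) is correct and coincides with what the paper does: conservativity gives $NW(f)=M$ by Poincar\'e recurrence, Theorem \ref{1accessible} gives accessibility, and Theorem \ref{ergodic2} upgrades this to ergodicity. The genuine gap is in case (i), at the step where you pass from the existence of \emph{one} $su$-torus (via Gan--Shi) to the existence of an $f$-\emph{periodic} one. The collection of $su$-tori is the set of compact accessibility classes, a closed $f$-invariant sublamination of $\Gamma(f)$, and it need not be finite: for a skew product over a linear Anosov automorphism of $\mathbb{T}^2$ whose fiber maps have irrational rotation number, every accessibility class is an $su$-torus, there are uncountably many of them, none is periodic, and $f$ has no periodic points. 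So your assertion that ``the collection of $su$-tori is a finite $f$-invariant family, hence some iterate fixes $T$'' fails exactly in the hardest subcase, and with it the contradiction you are after. (When the union of the $su$-tori is a \emph{proper} sublamination, its boundary leaves are periodic with dense periodic points by Proposition \ref{1sublamination}, so your argument does go through there; the unresolved case is when this union is all of $M$, i.e.\ $M$ is foliated by non-periodic $su$-tori, and there $f$ is in fact ergodic, so no contradiction is available at all.)

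The paper's treatment of the solvable branch is structured differently and sidesteps this issue: rather than extracting periodic points from an $su$-torus, it produces a compact \emph{periodic center} leaf and invokes Theorem \ref{compactperiodiccenter}. On $\mathbb{T}^3$, the linear part of $f$ is partially hyperbolic (Theorem \ref{phlinearpart}) and its middle eigenvalue must have modulus one, since otherwise $f$ would be isotopic to Anosov and would have periodic points; by Theorem \ref{on3torus}, $f$ is then leaf conjugate to a skew product and has a compact periodic center leaf. When $\pi_1(M)$ is virtually solvable but not virtually nilpotent, Theorem \ref{sol-nil} gives leaf conjugacy to the time-one map of a suspension Anosov flow, whose closed orbits again supply compact periodic center leaves. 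The remaining virtually nilpotent manifolds other than $\mathbb{T}^3$ are covered by Theorem \ref{nil}. If you wish to retain your Gan--Shi-based strategy, you must add a separate argument for the case in which $M$ is foliated by non-periodic $su$-tori, and that argument would in effect reproduce the compact-periodic-center analysis above.
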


Among techniques for proving ergodicity, accessibility—inspired by the first part of the Pugh-Shub conjecture—has emerged as one of the most effective tools (see \cite{08invent,BW10annals}). Specifically, investigating ergodicity can be reduced to a geometric problem rooted in the study of accessibility.

Hertz, Hertz, and Ures \cite{2008nil} demonstrated that the only 3-manifold with (virtually) nilpotent fundamental group admitting a non-accessible partially hyperbolic diffeomorphism is the 3-torus, provided the nonwandering set is the entire manifold. Accessibility has also been established for conservative partially hyperbolic diffeomorphisms in the following settings: (1) on closed Seifert 3-manifolds, either in the isotopy class of the identity \cite{2020Seifert} or acting pseudo-Anosov on the base \cite{FP-gafa}; and (2) on hyperbolic 3-manifolds \cite{FP_hyperbolic}.

An alternative approach to resolving accessibility and ergodicity involves specializing the dynamics. For manifolds with non-solvable fundamental groups, Fenley and Potrie \cite{FP_hyperbolic} proved accessibility for discretized Anosov flows. Subsequently, in \cite{FP21accessible}, they expanded this result to a broader class of partially hyperbolic diffeomorphisms under the conditions that $\pi_1(M)$ is not virtually solvable and $NW(f) = M$. This paper further addresses the classification of accessible partially hyperbolic systems.  

\begin{thm}\label{1accessible}
	Let $f: M \rightarrow M$ be a $C^1$ partially hyperbolic diffeomorphism of a closed 3-manifold without periodic points. If $\pi_1(M)$ is not virtually solvable and $NW(f) = M$, then $f$ is accessible.  
\end{thm}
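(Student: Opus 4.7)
I would argue by contradiction, assuming that $f$ is non-accessible. The first step is to invoke the structure theorem of Hertz--Hertz--Ures for accessibility classes on closed 3-manifolds: combined with $NW(f)=M$, it asserts that the set $\Gamma$ of points whose accessibility class fails to be open is a non-empty, closed, $f$-invariant lamination whose leaves are $C^1$ surfaces tangent to $E^s\oplus E^u$ and saturated by the strong stable and unstable foliations. The theorem then reduces to ruling out any such lamination $\Gamma$ under the standing hypotheses.

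\textbf{Eliminating compact leaves.} A compact leaf of $\Gamma$ is a 2-torus or Klein bottle tangent to $E^s\oplus E^u$; passing to an orientation double cover if necessary, I may assume that it is a 2-torus $T$. Some iterate $f^k$ fixes $T$, and since $T$ is $su$-saturated, $f^k|_T$ is Anosov. But Anosov diffeomorphisms of $\mathbb T^2$ have a dense set of periodic points, contradicting the no-periodic-points hypothesis (this is also the content behind Theorem \ref{maptori}, which would anyway force $\pi_1(M)$ to be virtually solvable). Thus every leaf of $\Gamma$ is non-compact, and by standard topology of laminations in dimension 3 each leaf is a plane, an open cylinder, or an open M\"obius band.

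\textbf{Using non-solvability of $\pi_1(M)$.} To dispose of non-compact leaves, I would bring in the branching foliation machinery of Burago--Ivanov and the classification program of Hammerlindl--Potrie and Fenley--Potrie. Extend $\Gamma$ to $f$-invariant branching foliations $\mathcal F^{cs}$ and $\mathcal F^{cu}$ tangent to $E^c\oplus E^s$ and $E^c\oplus E^u$, and lift the picture to the universal cover $\widetilde M$. The leaves of the lifted branching foliations are planes, each leaf space is a simply-connected (possibly non-Hausdorff) 1-manifold, and $\pi_1(M)$ acts by deck transformations. When $\pi_1(M)$ is not virtually solvable, Fenley--Potrie's classification \cite{FP_hyperbolic,FP21accessible} shows that the only admissible configurations are those coming from a discretized or collapsed Anosov flow on $M$. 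In that setting the accessibility theorem of \cite{FP21accessible} applies, and combined with the existence of $\Gamma$ produces an $f$-periodic leaf $L$ of $\Gamma$. On such a periodic non-compact leaf, $f^k|_L$ preserves a pair of transverse contracting and expanding one-dimensional foliations induced by $W^s$ and $W^u$; a Brouwer/Nielsen translation-number argument on the plane, cylinder, or M\"obius band then produces a fixed point of some further iterate, contradicting the hypothesis.

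\textbf{Main obstacle.} The crux of the argument is the last reduction: extracting a periodic leaf (hence a periodic point) from the mere existence of an invariant lamination with only non-compact leaves on a 3-manifold whose fundamental group is not virtually solvable. A priori, on the universal cover $\widetilde f$ might act on the leaf space of $\widetilde\Gamma$ by fixed-point-free translations, and the whole point of the non-solvability hypothesis is to rule this out. This is where Fenley--Potrie's detailed analysis of $\pi_1(M)$-actions on leaf spaces of branching foliations, together with $NW(f)=M$, must be used to force a branching leaf that is fixed modulo deck transformations; turning that into an honest periodic point of $f$ is the decisive step, and the one that genuinely uses the combination of all three hypotheses.
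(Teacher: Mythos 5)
Your reduction to ruling out an invariant lamination tangent to $E^s\oplus E^u$, and your elimination of compact leaves via Anosov tori, match the paper's first step (Theorem \ref{1su-foliation}, Theorem \ref{maptori}, Proposition \ref{nocompact}). But the decisive case — $\Gamma(f)=M$, i.e.\ a genuine $f$-invariant foliation $\mathcal{F}^{su}$ without compact leaves filling the whole manifold — is exactly where your argument has a gap. You invoke a ``Fenley--Potrie classification'' asserting that on a manifold with non-virtually-solvable fundamental group the only admissible configurations come from discretized or collapsed Anosov flows; no such theorem exists in this generality (it is the open classification conjecture; \cite{FP_hyperbolic} treats hyperbolic manifolds and \cite{FP21accessible} specific classes of systems, not arbitrary partially hyperbolic diffeomorphisms on arbitrary non-solvable $M$). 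Likewise, the accessibility theorem of \cite{FP21accessible} applies only to those restricted classes, so ``combining'' it with the existence of $\Gamma$ is circular: establishing that $f$ falls into a class where accessibility is known is precisely what has to be proved. Finally, even granting a periodic non-compact leaf, your Brouwer/Nielsen step does not go through: a fixed-point-free homeomorphism of the plane or cylinder preserving transverse contracting and expanding foliations need not have periodic points under iteration (non-wandering of $f$ on $M$ does not restrict to non-wandering on a leaf), and in the case $\Gamma(f)=M$ there is no reason for any leaf to be periodic at all.

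For contrast, the paper's actual route through this case is entirely different and occupies Sections \ref{top}--\ref{proof}: it uses completeness of $\mathcal{F}^{cs}$ and $\mathcal{F}^{cu}$ to show the center foliation is regulating for $\mathcal{F}^{su}$ (Corollary \ref{=1point}), shows the $su$-leaves are Gromov hyperbolic (via minimality and the absence of a transverse invariant measure, Corollary \ref{hyperbolicleaf}), and then analyzes the stable subfoliation $\widetilde{\Lambda}^s_F$ of each lifted leaf at the ideal boundary: single ideal points for rays (Proposition \ref{singlelimit}), density of ideal points via the uniform/non-uniform dichotomy (Propositions \ref{nonuniform} and \ref{uniform}), two distinct ideal points and uniform quasi-geodesity (Propositions \ref{twodistinctiidealpoint} and \ref{equivalent}), and finally the weak quasi-geodesic fan structure on a cylinder leaf, which forces a deck-transformation-invariant stable leaf and hence a closed stable leaf — the contradiction. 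None of this machinery, nor any substitute for it, appears in your proposal, so the proof as proposed does not establish the theorem.
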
  

Notably, accessibility can be established without assuming the non-wandering or conservative properties, as shown in \cite{DW03,BHHTU08,HS21DA,FP21accessible}. In \cite{FU2}, we extend the techniques developed here to prove that Theorem \ref{1accessible} remains valid even when $NW(f) \neq M$.  

The proof of Theorem \ref{ergodic} hinges on Theorem \ref{1accessible}, which relies on deep results from the geometry and topology of codimension-one foliations in 3-manifolds. We outline our strategy below:  

After passing to a finite cover and iteration, we may assume the manifold and all invariant subbundles are orientable, with orientations preserved by the dynamics. Crucially, this assumption does not affect the accessibility property.

Assuming the non-wandering property, the non-accessible case implies one of two possibilities: either the bundle $E^s \oplus E^u$ integrates into a foliation with non-compact leaves, or there exists a 2-torus tangent to $E^s \oplus E^u$ (see Theorem \ref{1su-foliation}). The latter scenario would yield an Anosov torus, forcing the manifold's fundamental group to be solvable. This reduces the problem to studying foliations tangent to $E^s \oplus E^u$. Additionally, the foliations tangent to $E^s \oplus E^c$ and $E^c \oplus E^u$ contain no compact leaves (Corollary \ref{1nocompact}). However, the intersection dynamics of these foliations—and their behavior within each foliation—remain poorly understood. A primary objective of this work is to derive topological constraints on these intersections. Specifically, we aim to exclude the existence of an $su$-foliation by demonstrating a global product structure among the leaves of the three foliations in the universal cover, a phenomenon that can only occur if the manifold has a solvable fundamental group.

In Section \ref{top}, we first establish that every lifted center leaf intersects each lifted $su$-leaf at precisely one point in the universal cover. This follows from the absence of compact $su$-leaves, which precludes multiple intersection points between a center leaf and an $su$-leaf. Within center-stable leaves, we prove that any pair of center and stable leaves intersects non-trivially, inducing a leafwise product structure between stable and center leaves. Analogously, this holds for center-unstable leaves. Extending this product structure to the universal cover guarantees the intersection of center leaves and $su$-leaves, reinforcing the global geometric constraints.

Every $su$-leaf is Gromov hyperbolic unless the manifold's fundamental group is virtually solvable (Corollary \ref{hyperbolicleaf}). Consequently, when lifted to the universal cover, each $su$-leaf can be compactified into a hyperbolic disk with an ideal circle at infinity. By analyzing the stable subfoliation within lifted $su$-leaves—induced by their intersection with the lifted center-stable foliation—we demonstrate in Section \ref{nondenselimit} that every ray in a stable leaf accumulates at a unique point on this ideal circle.  

We now show that the set of ideal points of stable leaves is dense in the ideal circle of any $su$-leaf. This result is established in two principal steps:  First, we get rid of the case where every pair of lifted $su$-leaves lies at bounded Hausdorff distance from one another (a scenario we term a \emph{uniform} $su$-foliation).
Second, if the $su$-foliation is \emph{non-uniform}, we demonstrate that the manifold must be a torus bundle over the circle under the density failure condition.  

Next, within each lifted $su$-leaf, we prove that every stable leaf possesses two distinct ideal points (Proposition \ref{twodistinctiidealpoint}). This result hinges critically on the global product structure of the $su$-foliation and center foliation. To achieve this, we analyze the distance between a stable leaf and the corresponding geodesic connecting its endpoints in the hyperbolic disk. This analysis establishes two key properties: the leaf space of the stable subfoliation within lifted $su$-leaves is Hausdorff, and stable leaves are uniform quasi-geodesics in the $su$-leaves.  

The proof of Theorem \ref{1accessible} is presented in Section \ref{proof}, while Section  \ref{pf_ergodic} is dedicated to the proof of Theorem \ref{ergodic}.


\section{Preliminaries}\label{preliminaries}

\subsection{Partial hyperbolicity}

Let \( M \) be a compact Riemannian manifold. A diffeomorphism \( f: M \to M \) is \emph{partially hyperbolic} if the tangent bundle splits into three nontrivial \( Df \)-invariant subbundles \( TM = E^s \oplus E^c \oplus E^u \), such that for all \( x \in M \) and unit vectors \( v^\sigma \in E^\sigma_x \) (\( \sigma = s, c, u \)):
\begin{equation*}
    \begin{aligned}
        \|Df(x)v^s\| &< 1 < \|Df(x)v^u\| \\
        \text{and} \quad \|Df(x)v^s\| &< \|Df(x)v^c\| < \|Df(x)v^u\|.
    \end{aligned}
\end{equation*}

\begin{thm}{\cite{BrinPesin74, HPS77}}
    Let \( f: M \to M \) be a partially hyperbolic diffeomorphism. Then there exist unique invariant foliations \( \mathcal{F}^s \) and \( \mathcal{F}^u \), tangent to \( E^s \) and \( E^u \), respectively.
\end{thm}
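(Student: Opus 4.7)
The plan is to construct $\mathcal{F}^s$ by a Hadamard--Perron graph transform and obtain $\mathcal{F}^u$ by applying the same construction to $f^{-1}$, which is itself partially hyperbolic with the roles of $E^s$ and $E^u$ interchanged. After passing to a sufficiently high iterate and choosing an adapted Riemannian metric, I would assume the partial hyperbolicity inequalities hold with constants $\lambda < \mu \le 1 \le \nu < \gamma$ such that $\|Df|_{E^s}\| \le \lambda$, $\mu \le \|Df|_{E^c}\| \le \nu$, $\|Df|_{E^u}\| \ge \gamma$. Using the exponential map at each $x \in M$, I would work in local charts $T_x M = E^s_x \oplus E^c_x \oplus E^u_x$ and represent candidate local stable plaques through $x$ as graphs of $C^1$ maps $\varphi : E^s_x(r) \to E^{cu}_x$ with $\varphi(0)=0$ and derivative taking values in a narrow cone around the zero section.

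Next, I would define the graph transform $\Gamma$ that sends a family of such admissible graphs $\{\varphi_x\}_{x \in M}$ to $\{(\Gamma \varphi)_x\}$ by pulling back, via the local expression of $f : T_{f^{-1}(x)}M \to T_x M$, the graph $\varphi_x$ to a new graph at $f^{-1}(x)$. The key estimates are: (i) $\Gamma$ preserves the cone condition, because the domination $\|Df v^s\| < \|Df v^c\|$ forces horizontal cones around $E^s$ to be contracted under $Df^{-1}$; (ii) $\Gamma$ is a contraction in the $C^0$ sup-norm, with contraction rate controlled by $\lambda/\mu < 1$. Hence $\Gamma$ has a unique fixed point, giving at each $x$ a local manifold $W^s_{\mathrm{loc}}(x)$ that is $C^1$, tangent to $E^s(y)$ at every $y \in W^s_{\mathrm{loc}}(x)$ (not just at $x$; tangency at arbitrary $y$ follows because $\{W^s_{\mathrm{loc}}(y)\}$ is itself an admissible family and hence agrees with the fixed point), satisfies $f(W^s_{\mathrm{loc}}(x)) \subset W^s_{\mathrm{loc}}(f(x))$, and consists exactly of points $z$ with $d(f^n(x), f^n(z)) \to 0$ exponentially.

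Then I would globalize by setting
\[
\mathcal{F}^s(x) \;:=\; \bigcup_{n \ge 0} f^{-n}\!\bigl(W^s_{\mathrm{loc}}(f^n(x))\bigr),
\]
equipping it with the leaf topology. Continuous (in fact H\"older) dependence of the fixed point of $\Gamma$ on the base point gives that $\{\mathcal{F}^s(x)\}$ forms a foliation in the standard sense, $f$-invariant by construction and tangent to $E^s$ everywhere. Uniqueness is immediate: any other $f$-invariant foliation $\mathcal{G}$ tangent to $E^s$ produces, through each $x$, local plaques whose iterates $f^{-n}(\mathcal{G}_{\mathrm{loc}}(f^n x))$ form an admissible graph family fixed by $\Gamma$, and so must coincide with the unique fixed point plaque $W^s_{\mathrm{loc}}(x)$, forcing $\mathcal{G} = \mathcal{F}^s$.

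The main technical obstacle is that, unlike the uniformly hyperbolic case where $E^s$ is contracted while its complement is expanded, here the complement $E^c \oplus E^u$ can itself contain contraction along $E^c$; what drives the argument is only the strict \emph{domination} $\|Df v^s\| < \|Df v^c\|$. So the cone-field and contraction estimates for $\Gamma$ must be set up in terms of the ratios $\lambda/\mu$ rather than absolute bounds. Once this is arranged, the proof goes through exactly as in the Anosov case, and the unstable foliation $\mathcal{F}^u$ is obtained by the symmetric argument for $f^{-1}$.
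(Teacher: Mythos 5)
The paper offers no proof of this statement---it is quoted as a classical theorem with citations to Brin--Pesin and Hirsch--Pugh--Shub, and the proof in those references is precisely the Hadamard--Perron graph-transform argument you sketch, so your approach matches the source. One caveat worth making explicit: the paper's definition of partial hyperbolicity is \emph{pointwise}, so uniform constants $\lambda<\mu$ separating $\|Df|_{E^s}\|$ from the conorm of $Df|_{E^c}$ need not exist globally; the cone-preservation and contraction estimates for $\Gamma$ must instead be run with the pointwise ratios $\|Df(x)|_{E^s}\|\cdot\|(Df(x)|_{E^{cu}})^{-1}\|<1$, which are uniformly bounded away from $1$ by compactness and which telescope along orbits---your closing paragraph essentially acknowledges this, and with that adjustment the argument is the standard, correct one.
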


This classical result establishes the unique integrability of the strong stable and unstable bundles. The foliations \( \mathcal{F}^s \) and \( \mathcal{F}^u \) are called the \emph{stable} and \emph{unstable foliations}. However, the center bundle \( E^c \) is not always integrable, and its integrability remains a longstanding open problem. A partially hyperbolic diffeomorphism is \emph{dynamically coherent} if there exist invariant foliations \( \mathcal{F}^{cs} \) and \( \mathcal{F}^{cu} \), tangent to \( E^s \oplus E^c \) and \( E^c \oplus E^u \), respectively. Dynamical coherence is not guaranteed; see \cite{Wilkinson98, BurnsWilkinson08, 2016example, BGHP3} for non-coherent examples. The foliations \( \mathcal{F}^{cs} \) and \( \mathcal{F}^{cu} \) are called the \emph{center-stable} and \emph{center-unstable foliations}. Under dynamical coherence, intersecting \( \mathcal{F}^{cs} \) and \( \mathcal{F}^{cu} \) yields a one-dimensional \( f \)-invariant foliation tangent to \( E^c \). If \( E^c \) is integrable, the resulting foliation \( \mathcal{F}^c \) is called the \emph{center foliation}. 

The bundle \( E^c \) is \emph{uniquely integrable} if there exists a \( C^0 \)-foliation \( \mathcal{F}^c \) with \( C^1 \)-leaves such that every \( C^1 \)-embedded curve \( \sigma: [0,1] \to M \), satisfying \( \dot{\sigma}(t) \in E^c(\sigma(t)) \), lies entirely within the leaf \( \mathcal{F}^c(\sigma(0)) \). Unique integrability of \( E^c \) implies dynamical coherence. We refer to \cite{BurnsWilkinson08} for further discussion. Notably, dynamical coherence does not preclude the existence of center-stable surfaces outside the foliation \( \mathcal{F}^{cs} \); thus, unique integrability is strictly stronger (see \cite{2018survey}).

\begin{thm}\cite{2015Center}\label{nocompactleaf}
    Let \( f: M^3 \to M^3 \) be a dynamically coherent partially hyperbolic diffeomorphism. Then \( \mathcal{F}^{cs} \) and \( \mathcal{F}^{cu} \) have no compact leaves.
\end{thm}

Three-dimensional manifolds admitting compact tori tangent to \( E^s \oplus E^c \), \( E^c \oplus E^u \), or \( E^s \oplus E^u \) are classified below. Such tori force the fundamental group to be (virtually) solvable.

\begin{thm} \cite{2011TORI}\label{maptori}
    Let \( f: M^3 \to M^3 \) be a partially hyperbolic diffeomorphism on a closed orientable 3-manifold. If there exists an \( f \)-invariant 2-torus \( T \) tangent to \( E^s \oplus E^u \), \( E^c \oplus E^u \), or \( E^c \oplus E^s \), then \( M^3 \) must be:
    \begin{enumerate}
        \item The 3-torus \( \mathbb{T}^3 \);
        \item The mapping torus of \( -\mathrm{id}: \mathbb{T}^2 \to \mathbb{T}^2 \); or
        \item The mapping torus of a hyperbolic automorphism of \( \mathbb{T}^2 \).
    \end{enumerate}
\end{thm}

A set is \emph{s-saturated} (resp.~\emph{u-saturated}) if it is a union of stable (resp.~unstable) leaves. A set is \emph{su-saturated} if it is both $s$- and $u$-saturated. The \emph{accessibility class} \( AC(x) \) of \( x \in M \) is the minimal $su$-saturated set containing \( x \). Points in the same accessibility class are connected by an \emph{$su$-path} (piecewise tangent to \( E^s \cup E^u \)). The diffeomorphism \( f \) is \emph{accessible} if \( AC(x) = M \) for all \( x \), i.e., any two points are connected by an $su$-path. Let \( \Gamma(f) \) denote the set of non-open accessibility classes; then \( f \) is accessible if and only if \( \Gamma(f) = \emptyset \).

The bundles \( E^s \) and \( E^u \) are \emph{jointly integrable} at \( x \in M \) if there exists \( \delta > 0 \) such that for all \( y \in W^s_\delta(x) \) and \( z \in W^u_\delta(x) \), the intersection \( W^u_{\mathrm{loc}}(y) \cap W^s_{\mathrm{loc}}(z) \) is nonempty.

\begin{prop}{\cite{08invent}}\label{jointint}
    \( \Gamma(f) \) is a compact, codimension-one invariant set laminated by accessibility classes. A point \( x \) belongs to \( \Gamma(f) \) if and only if \( E^s \) and \( E^u \) are jointly integrable at all points of \( AC(x) \). Moreover, \( E^s \) and \( E^u \) are globally jointly integrable if and only if \( \Gamma(f) = M \).
\end{prop}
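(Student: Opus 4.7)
The plan is to establish the three assertions in the order they appear, with the structural claim (that $\Gamma(f)$ is a compact codimension-one lamination) serving as the technical backbone from which both characterizations of joint integrability follow almost immediately.

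First, I would handle compactness and invariance of $\Gamma(f)$. Invariance is immediate because $f$ maps accessibility classes to accessibility classes, and the property of being open is preserved by $f$. For compactness, I would show that the complement $M\setminus\Gamma(f)$ — i.e.\ the union of open accessibility classes — is open. This is a formal observation: if $AC(x)$ is open, then every $y\in AC(x)$ satisfies $AC(y)=AC(x)$, so $AC(y)$ is open as well; hence $M\setminus\Gamma(f)$ is a union of open accessibility classes and is itself open. Therefore $\Gamma(f)$ is closed in $M$, hence compact.

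The heart of the argument is showing that each $AC(x)$ with $x\in\Gamma(f)$ is a $C^1$-immersed codimension-one submanifold, and that these assemble into a lamination. The plan here is local. Near any $x\in\Gamma(f)$, consider the map $\Phi(s,u,s',u')=W^u_{s'}(W^s_{u'}(W^u_u(W^s_s(x))))$ tracing four consecutive su-legs. Since $AC(x)$ is not open, its interior is empty, so the image of $\Phi$ cannot cover an open neighborhood of $x$. Using the transversality of $E^s$ and $E^u$ and the continuous variation of strong foliations, one concludes that the accessibility class through $x$ is locally the graph of a continuous function over a 2-disk, and in fact a $C^1$-immersed surface tangent to $E^s\oplus E^u$ at every point — this is precisely where joint integrability at $x$ enters. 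Doing this uniformly in a neighborhood of $x$ (using the fact that the set of points with non-open accessibility class is closed) produces a codimension-one lamination whose leaves are the accessibility classes meeting $\Gamma(f)$.

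This local analysis also delivers the characterization in the statement. If $E^s$ and $E^u$ are jointly integrable at every point of $AC(x)$, then the above construction shows that $AC(x)$ is contained in a codimension-one embedded su-surface, so it cannot be open, giving $x\in\Gamma(f)$. Conversely, if $x\in\Gamma(f)$, the structure of $AC(x)$ as a leaf of the lamination forces $W^u_{loc}(y)\cap W^s_{loc}(z)\neq\emptyset$ for nearby $y\in W^s(x)$, $z\in W^u(x)$, which is precisely joint integrability at $x$, and the same holds at every point of the leaf $AC(x)$ by translating the local picture along su-paths. The final statement is then a direct consequence: if joint integrability holds on all of $M$ then every accessibility class lies in a codimension-one surface and hence is non-open, so $\Gamma(f)=M$; conversely $\Gamma(f)=M$ means every accessibility class is non-open, and the previous equivalence applied at every point yields joint integrability on $M$.

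The main obstacle I anticipate is the regularity step in the middle paragraph — upgrading the merely topological fact that $AC(x)$ has empty interior to the geometric conclusion that it is a $C^1$ submanifold tangent to $E^s\oplus E^u$. The standard way to overcome this is via a holonomy-return argument on the four-legged su-rectangle $\Phi$: if the return map $(s,u)\mapsto(s',u')$ that closes $\Phi$ back to $x$ is non-trivial, then iterating produces a full open neighborhood inside $AC(x)$, contradicting $x\in\Gamma(f)$; triviality of this return map is exactly the joint integrability condition and simultaneously provides the local product chart that realizes $AC(x)$ as a codimension-one $C^1$ leaf.
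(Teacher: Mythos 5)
The paper does not actually prove this proposition; it is quoted verbatim from \cite{08invent} and used as a black box, so there is no in-paper argument to compare against. Your reconstruction follows the standard proof from that reference: closedness of $\Gamma(f)$ via openness of the union of open accessibility classes, and the dichotomy ``$AC(x)$ open versus $AC(x)$ a codimension-one su-leaf'' driven by the four-legged return map. Two points in your sketch deserve sharpening. First, the mechanism by which a non-trivial return displacement forces $AC(x)$ to be open is not ``iteration'' but an intermediate-value argument in the one-dimensional center direction: the center coordinate of $\Phi(s,u,s',u')$ is a continuous function of the leg lengths vanishing at the origin, so if it is ever nonzero it sweeps a full interval of center values, and combining this with the two-dimensional freedom of the first two legs covers an open neighborhood. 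Second, the upgrade from ``topological surface locally of the form $W^u_{loc}(W^s_{loc}(x))$'' to ``$C^1$ leaf tangent to $E^s\oplus E^u$'' is genuinely nontrivial; you correctly flag it, and the resolution is that a topological codimension-one manifold which contains the local stable and unstable manifolds of each of its points is automatically $C^1$ with tangent plane $E^s\oplus E^u$ (this is where the continuity of the splitting is used). You should also make explicit the homogeneity fact you implicitly use in the converse direction, namely that if some point of $AC(x)$ is interior then $AC(x)$ is open, so non-openness propagates joint integrability from $x$ to every point of $AC(x)$. With those clarifications the outline is a correct account of the cited result.
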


\begin{prop} \cite{2006Some}\label{Gamma(f)=M}
    Let \( f: M \to M \) be a partially hyperbolic diffeomorphism with one-dimensional \( E^c \). If \( f \) has no periodic points and \( E^s \), \( E^u \) are jointly integrable, then \( E^c \) is uniquely integrable.
\end{prop}

\subsection{Non-wandering property}

We now state key results for partially hyperbolic diffeomorphisms whose non-wandering set is the whole manifold, which will be used throughout this paper. Notably, in the non-wandering setting, we need not assume integrability of the center bundle.

\begin{thm}{\cite{2006Some}}\label{1integrable}
    Let \( f \) be a partially hyperbolic diffeomorphism of a closed 3-manifold. If \( f \) has no periodic points, then the following hold:
    \begin{enumerate}
        \item \( NW(f) = M \) if and only if \( f \) is transitive.
        \item If \( f \) is transitive and has no periodic points, then \( E^c \) is uniquely integrable.
    \end{enumerate}
\end{thm}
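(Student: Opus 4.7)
The plan is to prove the two claims separately, both leveraging the absence of periodic points in an essential way.

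For the equivalence between $NW(f)=M$ and transitivity, the direction from transitivity to non-wandering is immediate. For the converse, I would try to show that every strong-stable leaf is dense in $M$. Fix $x\in M$ and consider $\overline{W^s(x)}$: this is closed, $f^{-1}$-invariant, and $\mathcal{F}^s$-saturated. Using $NW(f)=M$ and Poincar\'e recurrence applied to a small foliated box transverse to $\mathcal{F}^s$, one produces recurrent points in $W^s(x)$ whose iterated unstable plaques shadow unstable plaques at the limit points; by continuous variation of $\mathcal{F}^u$, $\overline{W^s(x)}$ absorbs a whole unstable plaque at every accumulation point, hence becomes $\mathcal{F}^u$-saturated as well. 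So $\overline{W^s(x)}$ is an $f$-invariant $su$-saturated closed set. If it were proper, its boundary would itself be a closed, nontrivial, $f$-invariant, $su$-saturated set; a Birkhoff-type fixed point argument on a small transversal (or equivalently, on the induced quotient by the center direction) would then extract a periodic orbit, contradicting the hypothesis. Therefore $\mathcal{F}^s$ is minimal, and transitivity follows because a residual set of points then have dense forward orbits.

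For the unique integrability of $E^c$, I would argue by contradiction: suppose two distinct $C^1$ integral curves $\gamma_1 \neq \gamma_2$ of $E^c$ emanate from a common point $x$. Locally one can build a $2$-dimensional surface $\Sigma$ tangent to $E^c\oplus E^s$ by flowing a small stable arc along center curves; inside $\Sigma$ the curves $\gamma_1,\gamma_2$ bound a lens-shaped bigon $B$. Under forward iteration, the stable direction of $\Sigma$ contracts exponentially while the center direction is dominated by the unstable one, so $f^n(B)$ remains topologically a bigon whose transverse width in the $E^s$ direction shrinks to zero. By transitivity the forward orbit of $B$ meets every open set, so some high iterate $f^n(B)$ returns to a neighborhood of $B$; composing with the stable contraction produces a first-return map on a small $2$-disk whose topological behavior forces a fixed point by a Brouwer-type plane translation argument, hence a periodic point of $f$, contradicting the hypothesis. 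A conceivable alternative is to try to establish joint integrability of $E^s, E^u$ and then invoke Proposition \ref{Gamma(f)=M}, but the bigon argument seems more direct.

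The main obstacle I expect is the second part: constructing the bigon so that its iterates remain in a region where a meaningful center-stable surface exists (the very non-integrability we are assuming can cause the surface $\Sigma$ to fail to be globally well-defined), and setting up the first-return map so that a genuine fixed-point theorem applies. The first part is comparatively standard once one exploits the interplay between non-wandering behavior and exponential contraction/expansion along the strong foliations.
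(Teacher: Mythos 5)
First, note that the paper does not prove this statement at all: it is quoted verbatim from \cite{2006Some} and used as a black box, so there is no internal proof to compare against; your proposal has to be measured against the argument in that reference.

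For the first part, your basic object is wrong: $\overline{W^s(x)}$ is not $f^{-1}$-invariant, since $f^{-1}(W^s(x))=W^s(f^{-1}(x))$ is a different leaf; you would need to work with minimal sets of $\mathcal{F}^s$ or with $\overline{\bigcup_n W^s(f^n(x))}$. More seriously, the step you describe as ``a Birkhoff-type fixed point argument on a small transversal extracts a periodic orbit'' from a proper closed $f$-invariant $su$-saturated set is exactly the content of Proposition \ref{1sublamination} (\cite[Proposition A.5]{08invent}): the boundary leaves of such a lamination are periodic and carry dense periodic points. That is a genuinely nontrivial result (one must show the boundary leaves are permuted with finite period and then run an Anosov-closing argument inside them), and asserting it in one clause is the gap, not a detail. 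The same applies to promoting $\overline{W^s(x)}$ to a $u$-saturated set, which is essentially the minimality statement you are trying to prove.

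For the second part, the bigon argument does not go through. The surface $\Sigma$ tangent to $E^c\oplus E^s$ need not exist even locally in a coherent way (this is precisely what non-integrability can destroy), the forward iterates $f^n(B)$ contract in the stable direction but can grow without bound in the center direction, so they leave any chart in which a ``first-return map on a small $2$-disk'' would make sense, and no Brouwer-type theorem forces a fixed point of a return map on a disk that is not mapped into itself. The route you set aside as an ``alternative'' is the one that actually works and is the one used in \cite{2006Some}: one shows that if $E^s$ and $E^u$ fail to be jointly integrable at some point, the resulting open accessibility classes together with $NW(f)=M$ produce periodic points (again via the dense-periodic-points statement for $\Gamma(f)$ and its boundary leaves); hence, in the absence of periodic points, $E^s$ and $E^u$ are jointly integrable everywhere, and Proposition \ref{Gamma(f)=M} then yields the unique integrability of $E^c$.
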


Consequently, \( f \) is dynamically coherent, admitting \( f \)-invariant foliations \( \mathcal{F}^{cs} \) and \( \mathcal{F}^{cu} \) tangent to \( E^s \oplus E^c \) and \( E^c \oplus E^u \), respectively. When \( NW(f) = M \), the following structure theorem applies:

\begin{thm}{\cite{2008nil}}\label{1su-foliation}
    Let \( f \) be a partially hyperbolic diffeomorphism on an orientable 3-manifold \( M \) with \( NW(f) = M \). Assume \( E^s \), \( E^c \), \( E^u \) are orientable and \( f \) is not accessible. Then one of the following holds:
    \begin{enumerate}
        \item There exists an incompressible torus tangent to \( E^s \oplus E^u \);
        \item There exists a nontrivial \( f \)-invariant lamination \( \Gamma(f) \subsetneq M \) tangent to \( E^s \oplus E^u \), extendable to a foliation without compact leaves. Boundary leaves of \( \Gamma(f) \) are periodic and contain dense periodic points;
        \item There exists an \( f \)-invariant foliation tangent to \( E^s \oplus E^u \) without compact leaves.
    \end{enumerate}
\end{thm}

By Theorems \ref{nocompactleaf} and \ref{1integrable}, \( \mathcal{F}^{cs} \) and \( \mathcal{F}^{cu} \) have no compact leaves. In Case 1 of Theorem \ref{1su-foliation}, \( M \) has solvable fundamental group (Proposition \ref{nocompact}). Case 2 implies \( f \) has periodic points, see Proposition \ref{1sublamination}. Thus, under aperiodicity and non-solvability assumptions, only Case 3 persists:

\begin{cor}\label{1nocompact}
    Under the assumptions of Theorem \ref{1su-foliation}, if \( f \) has no periodic points and \( \pi_1(M) \) is not virtually solvable, then \( M \) admits three minimal foliations:
    \begin{itemize}
        \item \( \mathcal{F}^{cs} \) (tangent to \( E^s \oplus E^c \)),
        \item \( \mathcal{F}^{cu} \) (tangent to \( E^c \oplus E^u \)),
        \item \( \mathcal{F}^{su} \) (tangent to \( E^s \oplus E^u \)),
    \end{itemize}
    none of which have compact leaves. Minimality follows from Lemma \ref{1minimal}.
\end{cor}

\begin{prop}{\cite[Proposition A.5]{08invent}}\label{1sublamination}
    Let \( \Lambda \) be an \( f \)-invariant sublamination tangent to \( E^s \oplus E^u \). If \( NW(f) = M \), then the boundary leaves of \( \Lambda \) are periodic and contain dense periodic points.
\end{prop}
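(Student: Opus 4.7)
The plan is to analyze the complementary regions of $\Lambda$ dynamically, extract periodicity of boundary leaves from the non-wandering hypothesis, and then exploit the hyperbolic behaviour on the periodic boundary leaves to obtain dense periodic points. First I would observe that since $f(\Lambda)=\Lambda$, the map $f$ permutes the connected components of the open set $M\setminus\Lambda$. For any such component $U$ and any $x\in U$, the non-wandering hypothesis provides $n_i\to\infty$ with $f^{n_i}(x)\to x$; as $U$ is open and $f^{n_i}$ permutes components, we must have $f^{n_i}(U)=U$ eventually, so $U$ is periodic. Let $k$ be its minimal period and $g=f^k$; then $g$ preserves $U$ and also the set of leaves of $\Lambda$ meeting $\partial U$ from the $U$-side.

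Next I would show that any boundary leaf $L$ of $\Lambda$ adjacent to such a periodic $U$ is $g$-periodic. Fix $p\in L$ and a short arc $\tau$ transverse to $\Lambda$ at $p$ with one endpoint pushed into $U$; the leaves of $\Lambda$ near $L$ are parameterized by a sub-interval of $\tau$, and the ``gap'' corresponding to $U$ appears as a half-open sub-interval ending at $L$. Since $p$ is non-wandering, there are $q_i\to p$ with $f^{m_i}(q_i)\to p$. Choosing $q_i\in\tau\cap U$, the images $f^{m_i}(q_i)$ return to a small neighbourhood of $p$ and, lying in the same complementary region, must sit in the $U$-gap adjacent to some boundary leaf of $U$ close to $L$. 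Passing to a subsequence with $k\mid m_i$, I would argue using the uniform partial hyperbolicity of $Df$ along directions tangent to $L$ that the transverse return of $L$ cannot produce infinitely many distinct boundary leaves of $U$ accumulating on $L$: the local product boxes of $\Lambda$ have a definite transverse size, so after finitely many $g$-iterations the return must identify $L$ with itself, giving $g^m(L)=L$ for some $m\geq 1$. Controlling this transverse drift, so that a boundary leaf of a periodic complementary region cannot drift through infinitely many distinct leaves of $\partial U$, is the main obstacle.

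Finally, once $L$ is $g$-periodic, say $h:=g^m$ preserves $L$, the restriction $h|_L$ is a $C^r$ diffeomorphism of the (a priori non-compact) surface $L$ that preserves the splitting $TL=E^s|_L\oplus E^u|_L$, so $h|_L$ is Anosov on $L$. From $NW(f)=M$ I would deduce $NW(h|_L)=L$ by a Hopf-type shadowing argument: given $x\in L$ and returns $f^{n_i}(x)\to x$ in $M$, one uses that the stable and unstable manifolds of $x$ in $M$ already lie inside $L$, so the returns can be pushed along $\mathcal{F}^s\cup\mathcal{F}^u$ into $L$ without leaving $L$, yielding a nearby return within $L$ under a power of $h$. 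Once $NW(h|_L)=L$ is in hand, the classical Anosov closing lemma applied to $h|_L$ produces density of periodic points in $L$, completing the proof.
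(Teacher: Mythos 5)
First, a point of reference: the paper does not prove this proposition at all — it is quoted verbatim from \cite[Proposition A.5]{08invent} — so there is no in-paper argument to compare with; your proposal has to stand against the argument in that reference. Your Step 1 (the components of $M\setminus\Lambda$ are permuted by $f$ and, being open, are periodic because $NW(f)=M$) is correct and is indeed how every version of this proof begins.

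The rest, however, has a genuine gap, and it sits exactly where you flag it. The assertion that ``the local product boxes of $\Lambda$ have a definite transverse size, so after finitely many $g$-iterations the return must identify $L$ with itself'' does not follow: infinitely many distinct leaves of a lamination routinely cross a single product box, and nothing you have said prevents infinitely many distinct boundary leaves of the \emph{same} periodic gap $U$ from accumulating on $L$, with $g^m(L)$ drifting through them forever. The missing ingredient is quantitative: at a boundary point $x$ of a boundary leaf one has an adjacent open gap interval $I(x)$ in a transversal, these intervals are pairwise disjoint for distinct boundary leaves in any fixed transversal, and a finite cover of $M$ by lamination charts therefore bounds the number of boundary leaves carrying a collar of transverse size $\geq\delta$; one must then control how the collar size evolves under $Df$ in the transverse (center) direction along the orbit of $L$ to conclude periodicity. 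Without some such disjointness/measure count, Step 2 is an assertion, not a proof. The same difficulty reappears, unacknowledged, in Step 3: the returning points $z_i$ with $f^{n_i}(z_i)\to x$ supplied by $NW(f)=M$ need not lie in $L$, in $\Lambda$, or even in the collar of $L$, and the fact that $W^s(x)\cup W^u(x)\subset L$ gives no way to ``push'' them into $L$; the correct route is again to produce returns of points in the one-sided gap collar of $L$ (every point of the open collar is non-wandering, and the gap is invariant) and then to rule out that the return lands in the collar of a \emph{different} boundary leaf of $U$ near $x$ — which is precisely the unsolved Step 2 issue. Once genuine recurrence inside $L$ is established, your final appeal to the Anosov closing lemma for $h|_L$ is legitimate (the leaf may be non-compact, but hyperbolicity and the su-local product structure are uniform because $\Lambda$ lives in the compact manifold $M$ and $E^s$, $E^u$ are jointly integrable on it, cf.\ Proposition \ref{jointint}); so the proposal is salvageable in outline, but both of its central steps currently rest on the same unproved transverse-control claim.
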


\subsection{Foliation by non-compact leaves}

We summarize key results on codimension-one foliations, particularly 2-dimensional foliations in 3-manifolds.  

A \emph{Reeb component} of a foliation is a solid torus whose interior is foliated by planes transverse to the core, with all leaves limiting on the boundary torus (which is itself a leaf). A foliation is \emph{Reebless} if it contains no Reeb components. A foundational result is:

\begin{thm}[Novikov]\label{Novikov}
    Let \( M \) be a compact orientable 3-manifold, and \( \mathcal{F} \) a Reebless, transversally orientable, codimension-one foliation. Then:
    \begin{enumerate}
        \item[(i)] \( \mathcal{F} \) admits no null-homotopic closed transversal;
        \item[(ii)] Every non-nullhomotopic closed path in a leaf of \( \mathcal{F} \) remains non-nullhomotopic in \( M \).
    \end{enumerate}
\end{thm}

Let \( \widetilde{\mathcal{F}} \) denote the lift of \( \mathcal{F} \) to the universal cover \( \widetilde{M} \). A foliation \( \mathcal{F} \) is \emph{\(\mathbb{R}\)-covered} if the leaf space of \( \widetilde{\mathcal{F}} \) is Hausdorff and homeomorphic to \( \mathbb{R} \). A foliation is \emph{taut} if there exists a closed transversal intersecting every leaf. For codimension-one foliations:

\begin{thm}{\cite[Corollary 3.3.8, Corollary 6.3.4]{CC00I}}
    Let \( \mathcal{F} \) be a transversely oriented, codimension-one foliation on a compact 3-manifold \( M \). If \( \mathcal{F} \) has no compact leaves, it is taut.
\end{thm}

The next theorem synthesizes properties of taut foliations, drawing from Novikov's work, \cite{Roussarie71}, \cite{Palmeira}, and \cite{Calegari07book}. Analogous results hold for essential laminations \cite{GabaiOertel89}.

\begin{thm}\label{taut}
    Let \( M \) be a closed 3-manifold not finitely covered by \( \mathbb{S}^2 \times \mathbb{S}^1 \). If \( M \) admits a taut foliation \( \mathcal{F} \), then:
    \begin{itemize}
        \item Every leaf of \( \widetilde{\mathcal{F}} \) is a properly embedded plane dividing \( \widetilde{M} \) into two connected half-spaces.
        \item The universal cover \( \widetilde{M} \) is homeomorphic to \( \mathbb{R}^3 \).
        \item Every closed transversal to \( \mathcal{F} \) is homotopically non-trivial.
        \item Transversals to \( \widetilde{\mathcal{F}} \) intersect each leaf at most once.
    \end{itemize}
\end{thm}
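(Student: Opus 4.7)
The plan is to chain together the Novikov-Reeb structure theorem (already quoted in the excerpt), Palmeira's theorem on simply connected $3$-manifolds foliated by planes, and an elementary separation argument. First, because every taut foliation is Reebless, Novikov's theorem applies directly to $\mathcal{F}$, so property (iii) of the statement is immediate: a closed transversal null-homotopic in $M$ would violate part (i) of Novikov, so every closed transversal to $\mathcal{F}$ is essential in $\pi_{1}(M)$.

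For the first bullet, I would lift $\mathcal{F}$ to $\widetilde{\mathcal{F}}$ on $\widetilde{M}$, noting that $\widetilde{\mathcal{F}}$ is still Reebless. Novikov's theorem part (ii) implies that each leaf of $\mathcal{F}$ is $\pi_{1}$-injectively immersed in $M$, so every leaf of $\widetilde{\mathcal{F}}$ is homeomorphic to the universal cover of a surface, hence simply connected. The hypothesis that $M$ is not finitely covered by $S^{2}\times S^{1}$, combined with the Sphere Theorem, forces $\pi_{2}(\widetilde{M})=0$ and rules out spherical leaves; thus every leaf of $\widetilde{\mathcal{F}}$ is a plane. Properness follows because a non-proper accumulation of leaves in a Reebless codimension-one foliation produces a local Reeb-type obstruction. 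Each such properly embedded plane is two-sided and therefore separates $\widetilde{M}$ into two half-spaces. Palmeira's theorem then applies to $\widetilde{\mathcal{F}}$, since the simply connected open $3$-manifold $\widetilde{M}$ is now foliated by planes, yielding $\widetilde{M}\cong\mathbb{R}^{3}$ and establishing the second bullet.

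For the last bullet, I would suppose a transversal $\tau$ to $\widetilde{\mathcal{F}}$ meets a single leaf $L$ at two parameter values $t_{1}<t_{2}$. Since $L$ separates $\widetilde{M}$ into two half-spaces and $\mathcal{F}$ is transversely oriented, transversality forces $\tau$ to cross $L$ in a consistent direction; after the crossing at $t_{1}$, the arc $\tau$ is on the opposite side from its starting point and cannot return to $L$ without reversing its transverse orientation, contradicting the second intersection at $t_{2}$.

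The main obstacle will be the first bullet: upgrading the $\pi_{1}$-injectivity provided by Novikov to the topological statement that every lifted leaf is a plane requires invoking the Sphere Theorem to exclude spherical leaves, which is precisely where the hypothesis on $S^{2}\times S^{1}$-covers is used; verifying properness also needs a careful Reeb-type exclusion. Once the plane-leaf structure is in place, Palmeira's theorem and the separation argument deliver the remaining statements essentially mechanically.
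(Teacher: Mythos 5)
The paper does not actually prove Theorem \ref{taut}; it quotes it as a standard consequence of Novikov's theorem together with the work of Roussarie and Palmeira, pointing to Calegari's book. Your proposal reconstructs essentially that standard proof, and the overall architecture is sound: taut $\Rightarrow$ Reebless gives bullet three from Novikov (i); $\pi_1$-injectivity of leaves (Novikov (ii)) makes every leaf of $\widetilde{\mathcal{F}}$ simply connected; excluding spheres and establishing properness gives bullet one; Palmeira then gives $\widetilde{M}\cong\mathbb{R}^3$; and separation plus monotonicity of transversals gives bullet four (note that transverse orientability, which you invoke, is automatic for $\widetilde{\mathcal{F}}$ since $\widetilde{M}$ is simply connected, even if $\mathcal{F}$ itself is not transversely oriented).

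Two steps deserve tightening. First, the exclusion of sphere leaves is most directly done by Reeb stability, not the Sphere Theorem: a spherical leaf of $\widetilde{\mathcal{F}}$ projects to a compact leaf of $\mathcal{F}$ with finite fundamental group, and Reeb stability then forces $M$ to be finitely covered by $S^2\times S^1$, contradicting the hypothesis. Knowing $\pi_2(\widetilde{M})=0$ does not by itself forbid a leaf that is a null-homotopic embedded sphere, so the Sphere Theorem is the wrong tool here. Second, your justification of properness ("a local Reeb-type obstruction") is too vague to stand on its own; the standard argument is that a non-properly embedded leaf of $\widetilde{\mathcal{F}}$ accumulates somewhere, which lets one close up an arc in the leaf by a short transversal to produce a closed transversal in $\widetilde{M}$; this is null-homotopic because $\widetilde{M}$ is simply connected and projects to a null-homotopic closed transversal of $\mathcal{F}$, contradicting Novikov (i). The same closing-up argument gives an alternative, orientation-free proof of the fourth bullet. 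With these two points made precise, the proposal is a correct proof of the cited theorem.
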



\section{Geometrical Structure}\label{top}

Throughout this paper, \( M \) denotes a closed 3-manifold, and \( f: M \to M \) is a partially hyperbolic diffeomorphism with no periodic points and not accessible. After passing to a finite cover and iterating \( f \) if necessary, we assume:
\begin{itemize}
    \item \( M \) and the subbundles \( E^\sigma \) (\( \sigma = c, s, u \)) are orientable;
    \item \( f \) preserves the orientations of \( E^\sigma \).
\end{itemize}
These assumptions do not affect the non-accessibility of \(f\). Our goal is to prove Theorem \ref{1accessible} in subsequent sections.  

Key examples of non-accessible systems include:  
\begin{itemize}
    \item \textbf{Suspension Anosov flows:} The time-\(\alpha\) map (\(\alpha \in \mathbb{R} \setminus \mathbb{Q}\)) yields partially hyperbolic diffeomorphisms without periodic points.
    \item \textbf{Skew products on \(\mathbb{T}^3\):} Fiberwise irrational rotations over a linear Anosov base produce systems where accessibility classes are 2-tori.
\end{itemize}

\begin{prop}\label{nocompact}
    If \( \pi_1(M) \) is not virtually solvable, then \( M \) contains no compact \( su \)-leaves.
\end{prop}

\begin{proof}
    Suppose for contradiction there exists a compact \( su \)-leaf \( T \subset M \). By the Poincaré-Hopf theorem, \( T \) is a 2-torus. As shown in \cite{2011TORI}, \( T \) must be an Anosov torus. Theorem \ref{maptori} then implies \( M \) is either \( \mathbb{T}^3 \), the mapping torus of \( -\mathrm{id}: \mathbb{T}^2 \to \mathbb{T}^2 \), or the mapping torus of a hyperbolic automorphism—all cases where \( \pi_1(M) \) is solvable, contradicting the hypothesis.
\end{proof}

\subsection{Completeness of the center-stable foliation}\label{section_complete}

The concept of completeness, introduced in \cite{BW05}, is central to understanding the global structure of foliations. In this subsection, we prove completeness for center-stable and center-unstable foliations, leading to topological constraints on leaves and the ambient manifold.

\begin{defn}
    The \emph{center-stable foliation} \( \mathcal{F}^{cs} \) is \textbf{complete} if \( \mathcal{F}^{cs}(x) = W^s(W^c(x)) \) for all \( x \in M \). Analogously, the \emph{center-unstable foliation} \( \mathcal{F}^{cu} \) is complete if \( \mathcal{F}^{cu}(x) = W^u(W^c(x)) \).
\end{defn}

A leaf \( F \in \mathcal{F}^{cs} \) is a \textbf{product} of center and stable leaves if \( F = W^s(W^c(x)) = W^c(W^s(x)) \) for all \( x \in F \). Completeness implies this product structure globally:

\begin{lem}\label{bi-foliated}
    The following are equivalent:
    \begin{enumerate}
        \item \( \mathcal{F}^{cs} \) is complete;
        \item Every leaf of \( \mathcal{F}^{cs} \) is a product of center and stable leaves.
    \end{enumerate}
\end{lem}

\begin{proof}
    (2) \(\Rightarrow\) (1) is immediate. For (1) \(\Rightarrow\) (2), assume \( \mathcal{F}^{cs} \) is complete. Suppose for contradiction there exists \( F \in \mathcal{F}^{cs} \) and \( x, y \in F \) such that \( W^c(y) \cap W^s(x) = \emptyset \). By completeness, \( F = W^s(W^c(y)) \), so \( W^s(x) \) must intersect \( W^c(y) \), a contradiction.
\end{proof}

\begin{prop}\label{1complete}\cite[Lemma 6.3]{2020Seifert}
    Let \( f \) be non-accessible with \( NW(f) = M \) and no periodic points. Then \( \mathcal{F}^{cs} \) and \( \mathcal{F}^{cu} \) are complete.
\end{prop}

Let \( \widetilde{M} \) be the universal cover of \( M \), and let \( \widetilde{\mathcal{F}}^{\sigma} \) (\( \sigma = cs, cu \)) denote the lifted foliations. Proposition \ref{1complete} implies:

\begin{cor}\label{liftcomplete}
    \( \widetilde{\mathcal{F}}^{cs} \) and \( \widetilde{\mathcal{F}}^{cu} \) are complete.
\end{cor}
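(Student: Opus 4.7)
The plan is to establish, for each $\tilde x \in \widetilde M$, the pointwise identity $\widetilde{\mathcal{F}}^{cs}(\tilde x) = \widetilde W^s(\widetilde W^c(\tilde x))$; the statement for $\widetilde{\mathcal{F}}^{cu}$ will follow by an identical argument. The easy inclusion $\widetilde W^s(\widetilde W^c(\tilde x)) \subseteq \widetilde{\mathcal{F}}^{cs}(\tilde x)$ is automatic, since a lifted center leaf and every stable leaf based at a point of it lie inside the ambient lifted center-stable leaf. Hence the real content is the reverse inclusion, for which the two inputs advertised in the hint are exactly (i) downstairs completeness from Proposition \ref{1complete}, and (ii) the connectedness of the leaves of $\widetilde{\mathcal{F}}^{cs}$.

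The key mechanism I would use is unique path lifting together with the fact that the strong and center leaves in $M$ are $1$-dimensional and simply connected, so the restrictions of $\pi$ to each $\widetilde W^c(\tilde x)$ and each $\widetilde W^s(\cdot)$ are homeomorphisms onto their images in $M$. Given $\tilde y$ in $F := \widetilde{\mathcal{F}}^{cs}(\tilde x)$, set $L := \pi(F) = \mathcal{F}^{cs}(\pi(\tilde x))$. By Proposition \ref{1complete} there is $z \in W^c(\pi(\tilde x))$ with $\pi(\tilde y) \in W^s(z)$, producing a concatenation of a center arc from $\pi(\tilde x)$ to $z$ and a stable arc from $z$ to $\pi(\tilde y)$ inside $L$. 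The center arc lifts uniquely inside $\widetilde W^c(\tilde x)$ to $\tilde z$, and the stable arc lifts uniquely inside $\widetilde W^s(\tilde z)$ to a point $\tilde y'$ with $\pi(\tilde y')=\pi(\tilde y)$; by construction $\tilde y' \in \widetilde W^s(\widetilde W^c(\tilde x))$.

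The remaining step, and the main obstacle, is to identify $\tilde y'$ with $\tilde y$. I would do this by showing that $A := \widetilde W^s(\widetilde W^c(\tilde x))$ is both open and closed in the connected leaf $F$. Openness is immediate from the local product structure of $\widetilde{\mathcal{F}}^{cs}$ near any point of $\widetilde W^c(\tilde x)$, propagated along stable leaves. For closedness one takes $\tilde y_n \in A$ with witnesses $\tilde z_n \in \widetilde W^c(\tilde x)$ and $\tilde y_n \to \tilde y$, projects to $M$, invokes the downstairs completeness to produce a limiting witness $z_\infty \in W^c(\pi(\tilde x))$, and transports $z_\infty$ back to $\widetilde W^c(\tilde x)$ through the homeomorphism $\pi|_{\widetilde W^c(\tilde x)}$, so that $\tilde z_n \to \tilde z_\infty \in \widetilde W^c(\tilde x)$ and $\tilde y \in \widetilde W^s(\tilde z_\infty)$ by continuity of the stable foliation. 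Since $F$ is connected (in fact a plane by Theorem \ref{taut}), $A = F$ follows, which is the desired completeness. The delicate point throughout is controlling the non-compact sequence $\tilde z_n$ along the center leaf; it is precisely the contractibility of stable and center leaves in $M$ that prevents any monodromy obstruction and makes the downstairs witness pin down an upstairs witness uniquely.
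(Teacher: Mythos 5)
Your overall strategy (reduce to showing that $A:=\widetilde W^s(\widetilde W^c(\tilde x))$ is open and closed in the connected leaf $F=\widetilde{\mathcal{F}}^{cs}(\tilde x)$) is reasonable, and your openness argument via stable holonomy is fine. The paper itself offers no more than ``by Proposition \ref{1complete} and connectedness,'' so the burden falls entirely on the step you single out as delicate --- and that step, closedness, is where your argument has a genuine gap. First, the upstairs witnesses $\tilde z_n\in\widetilde W^c(\tilde x)$ with $\tilde y_n\in\widetilde W^s(\tilde z_n)$ need not converge: nothing bounds the lengths of the stable arcs from $\tilde z_n$ to $\tilde y_n$, so $\tilde z_n$ may escape along the non-compact center leaf; and even along a convergent subsequence, ``continuity of the stable foliation'' does not let you pass to the limit, because the leaf space of the stable subfoliation of $F$ is not known to be Hausdorff at this stage (the paper devotes substantial effort in Sections \ref{nondenselimit}--\ref{denselimit} to exactly such Hausdorffness questions for the analogous subfoliations of $su$-leaves, and Lemma \ref{nonHausdorffness} shows non-Hausdorffness is a live possibility in general).

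Second, and more fundamentally, your fallback --- producing a downstairs witness $z_\infty\in W^c(\pi(\tilde x))$ with $\pi(\tilde y)\in W^s(z_\infty)$ and lifting it to $\tilde z_\infty\in\widetilde W^c(\tilde x)$ --- only tells you that $\widetilde W^s(\tilde y)$ and $\widetilde W^s(\tilde z_\infty)$ are \emph{two lifts of the same} stable leaf $W^s(z_\infty)$ inside $F$; it does not identify them. The contractibility of the individual stable and center leaves in $M$ is irrelevant here: the ambiguity is indexed by the deck group of the covering $\pi|_F:F\to\mathcal{F}^{cs}(\pi(\tilde x))$, i.e.\ by $\pi_1$ of the downstairs $cs$-leaf, and by Lemma \ref{cylin-plane} together with Theorem \ref{rosenberg} that leaf is typically a cylinder, so $W^s(z_\infty)$ has infinitely many distinct lifts in $F$. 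This is precisely the monodromy obstruction you assert is absent; it is not ruled out by anything in your argument. To close the gap one must either use the actual mechanism of the proof of Proposition \ref{1complete} in \cite{2020Seifert} (which works with saturations under $f$-iteration and yields the statement on the universal cover directly), or supply a genuine argument that the relation ``$\,\tilde y\in\widetilde W^s(\widetilde W^c(\tilde x))$'' is closed, e.g.\ by controlling how a stable leaf in $F$ can accumulate on the boundary of the open set $A$. As written, the proposal does not establish the corollary.
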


\begin{lem}\label{cylinder}\cite[Lemma 3.4]{Zhang21}
    A center-stable leaf containing a compact center leaf is a cylinder.
\end{lem}

\begin{proof}
    Denote by $\gamma$ a compact center leaf and by $\mathcal{F}^{cs}(x)$ the center stable leaf containing $\gamma$. If there is a stable leaf intersecting $\gamma$ twice, then we have two lifts of $\gamma$ in $\widetilde{\mathcal{F}}^{cs}(\gamma)$. The completeness of $\widetilde{\mathcal{F}}^{cs}$ implies that every stable leaf in $\mathcal{F}^{cs}(x)$ intersects $\gamma$ at least twice. Thus, the center stable leaf of $\gamma$ is a compact surface. This contradicts Theorem \ref{nocompactleaf}. If every stable leaf intersects $\gamma$ once, we can show that the fundamental group of $\mathcal{F}^{cs}(x)$ acts freely on $\mathbb{R}$. Then the H\"older Theorem \cite{Holder1901} asserts that this group is abelian. Hence $\mathcal{F}^{cs}(x)$ is a cylinder.
\end{proof}

\begin{lem}\label{cylin-plane}
    Every center-stable leaf is either a cylinder or a plane.
\end{lem}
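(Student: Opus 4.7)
The plan is to exhibit each center-stable leaf $F$ as an open, orientable surface whose fundamental group is simultaneously free and abelian, which forces $\pi_1(F)\in\{1,\mathbb{Z}\}$ and hence $F\cong\mathbb{R}^2$ or $F\cong S^1\times\mathbb{R}$. First I would note that $\mathcal{F}^{cs}$ has no compact leaves by Theorem \ref{nocompactleaf}, so it is taut, and Theorem \ref{taut} then says that each lift $\widetilde{F}\subset\widetilde{M}$ is a properly embedded plane. In particular $\widetilde{F}$ is simply connected and serves as a universal cover of $F$. Since $F$ is a non-compact orientable surface, $\pi_1(F)$ is automatically free, so the problem reduces to showing that $\pi_1(F)$ is abelian.

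To prove abelianity I would first establish a global product structure on $\widetilde{F}$. By Corollary \ref{liftcomplete} the foliation $\widetilde{\mathcal{F}}^{cs}$ is complete, and Lemma \ref{bi-foliated} then says that $\widetilde{F}$ is bi-foliated by the induced lifted stable and center foliations, with each lifted stable leaf meeting each lifted center leaf. On the oriented plane $\widetilde{F}\cong\mathbb{R}^2$, a standard sign-of-crossing argument using the orientations of $E^s$ and $E^c$ shows that two transverse oriented foliations by properly embedded lines can have each pair of leaves cross at most once; combined with completeness, this intersection is exactly one point. I thus obtain a topological product $\widetilde{F}\cong\widetilde{S}\times\widetilde{C}$, where $\widetilde{S}$ and $\widetilde{C}$ are representative stable and center leaves, both homeomorphic to $\mathbb{R}$. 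In particular the leaf space of the stable subfoliation inside $\widetilde{F}$ is homeomorphic to $\mathbb{R}$.

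Finally, the deck group of the covering $\widetilde{F}\to F$, which is $\pi_1(F)$, acts on this leaf space by orientation-preserving homeomorphisms, the orientation coming from the co-orientation of $E^s$ inside $E^s\oplus E^c$. If a non-trivial deck transformation $\gamma$ fixed a lifted stable leaf $\widetilde{L}$ setwise, then $\gamma$ would descend to a non-trivial deck transformation of the covering $\widetilde{L}\to L$, where $L$ is the stable leaf of $F$ to which $\widetilde{L}$ projects; but $L$ is homeomorphic to $\mathbb{R}$ (stable manifolds are never compact) and $\mathbb{R}$ admits no non-trivial self-covering, a contradiction. Hence the action of $\pi_1(F)$ on the stable leaf space $\mathbb{R}$ is free, and H\"older's theorem forces $\pi_1(F)$ to be abelian. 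Together with freeness this gives $\pi_1(F)\in\{1,\mathbb{Z}\}$, completing the proof for center-stable leaves; the center-unstable case is symmetric. The main obstacle, I expect, is justifying the ``at most one intersection'' step, which crucially uses both the orientability of the two foliations and the simple connectivity of $\widetilde{F}$; once the global product structure is in place, the H\"older-type argument closely mirrors the one already used inside the proof of Lemma \ref{cylinder}.
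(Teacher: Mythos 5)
Your proof is correct and follows essentially the same route as the paper's: completeness gives a global product structure on the universal cover of the leaf, the deck group acts freely on a leaf space homeomorphic to $\mathbb{R}$, and H\"older's theorem forces $\pi_1(F)$ to be abelian, hence trivial or $\mathbb{Z}$. The only (harmless) difference is that you run the argument solely on the stable leaf space, which lets you bypass the paper's preliminary treatment of leaves containing a compact center leaf (Lemma \ref{cylinder}), whereas the paper uses both the center and the stable leaf spaces and embeds the deck group into the product of the two abelian actions.
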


\begin{proof}
    By Lemma \ref{cylinder}, center-stable leaves with compact center leaves are cylinders. For leaves without compact centers, let \( \widetilde{\mathcal{F}}^{cs}(\tilde{x}) \) be a lift. Deck transformations act freely on center/stable leaves in \( \widetilde{\mathcal{F}}^{cs}(\tilde{x}) \), inducing abelian actions. Thus, \( \mathcal{F}^{cs}(x) \) is a plane or cylinder.
\end{proof}

\begin{thm}\label{rosenberg}\cite{Rosenberg}
    If a closed 3-manifold admits a foliation by planes, it is the 3-torus. This holds for \( C^0 \)-foliations by \cite{Gabai90,Imanishi74}.
\end{thm}

\subsection{Intersection between weak foliations}

This subsection establishes key results about intersections between lifted foliations in \( \widetilde{M} \), culminating in Proposition \ref{>1point} and Corollary \ref{=1point}.

\begin{lem}\label{at-most-once}
    For any \( x, y, z \in \widetilde{M} \), the intersections \( \widetilde{W}^c(x) \cap \widetilde{W}^s(y) \), \( \widetilde{W}^c(x) \cap \widetilde{W}^u(y) \), and \( \widetilde{W}^s(x) \cap \widetilde{W}^u(y) \) contain at most one point.
\end{lem}

\begin{proof}
    Suppose \( \widetilde{W}^c(x) \cap \widetilde{W}^s(y) \) has two distinct points. Then \( \widetilde{W}^s(y) \) intersects \( \widetilde{\mathcal{F}}^{cu} \) at two points, violating Theorem \ref{taut} (no compact leaves). Analogous reasoning applies to other intersections.
    
\end{proof}

\begin{lem}\label{embeddedplane}
    Every leaf of \( \widetilde{\mathcal{F}}^\sigma \) (\( \sigma = cs, cu, su \)) is a properly embedded plane dividing \( \widetilde{M} \) into two connected half-spaces.
\end{lem}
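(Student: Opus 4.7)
The plan is to verify that each of the three foliations $\mathcal{F}^{cs}$, $\mathcal{F}^{cu}$, $\mathcal{F}^{su}$ satisfies the hypotheses of Theorem \ref{taut}, namely that it is a transversely oriented taut codimension-one foliation on a closed 3-manifold that is not finitely covered by $S^2\times S^1$, and then quote the theorem directly.

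First, transverse orientability of each of $\mathcal{F}^{cs}$, $\mathcal{F}^{cu}$, $\mathcal{F}^{su}$ is immediate from the standing assumption at the beginning of Section \ref{top} that the bundles $E^s$, $E^c$, $E^u$ are orientable, since the normal bundle to $\mathcal{F}^{cs}$ is $E^u$, and similarly for the other two foliations. Second, by Corollary \ref{1nocompact} together with Proposition \ref{nocompact}, none of the three foliations has a compact leaf. Hence by the theorem cited just above Theorem \ref{taut}, each of $\mathcal{F}^{cs}$, $\mathcal{F}^{cu}$, $\mathcal{F}^{su}$ is taut.

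Next I would rule out the exceptional topology: $M$ is not finitely covered by $S^2\times S^1$. Indeed, $\pi_1(S^2\times S^1)\cong\mathbb{Z}$ is solvable, so if $M$ were finitely covered by $S^2\times S^1$, then $\pi_1(M)$ would be virtually cyclic and therefore virtually solvable, contradicting the standing hypothesis of Theorem \ref{1accessible}.

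With these ingredients in place, Theorem \ref{taut} applied to each of $\widetilde{\mathcal{F}}^{cs}$, $\widetilde{\mathcal{F}}^{cu}$, $\widetilde{\mathcal{F}}^{su}$ yields the conclusion: every leaf is a properly embedded plane that separates $\widetilde{M}$ into two connected half spaces. There is no real obstacle here; the only point to be careful about is to have the non-virtually-solvable hypothesis on hand so that both the absence of compact leaves (via Proposition \ref{nocompact}) and the exclusion of the $S^2\times S^1$ cover are available simultaneously.
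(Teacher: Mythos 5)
Your proposal is correct and follows exactly the route the paper intends: the paper states the lemma as a direct corollary of Theorem \ref{taut}, Corollary \ref{1nocompact} and Proposition \ref{nocompact}, and your sketch simply fills in the routine verifications (transverse orientability from orientability of the bundles, tautness from the absence of compact leaves, and exclusion of $S^2\times S^1$ covers via the non-virtually-solvable hypothesis). Nothing further is needed.
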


\begin{proof}
    Immediate from Theorem \ref{taut}, Corollary \ref{1nocompact}, and Proposition \ref{nocompact}.  
\end{proof}

\begin{lem}\label{<1center}
    For \( x, y \in \widetilde{M} \):  
    \begin{itemize}
        \item \( \widetilde{\mathcal{F}}^{cs}(x) \cap \widetilde{\mathcal{F}}^{cu}(y) \) contains at most one center leaf;
        \item \( \widetilde{\mathcal{F}}^{cs}(x) \cap \widetilde{\mathcal{F}}^{su}(y) \) contains at most one stable leaf;
        \item \( \widetilde{\mathcal{F}}^{cu}(x) \cap \widetilde{\mathcal{F}}^{su}(y) \) contains at most one unstable leaf.
    \end{itemize}
\end{lem}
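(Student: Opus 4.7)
The plan is to argue by contradiction, using the completeness of the weak foliations established in Proposition \ref{1complete} together with the taut-foliation property of Theorem \ref{taut}. First I observe that each of the three intersections is a union of entire center/stable/unstable leaves: at any point of $\widetilde{\mathcal{F}}^{cs}(x)\cap\widetilde{\mathcal{F}}^{cu}(y)$, the tangent space to the intersection curve lies in $E^{cs}\cap E^{cu}=E^{c}$, so the whole center leaf through that point is contained in both center-stable and center-unstable leaves; the same remark with $E^{cs}\cap E^{su}=E^{s}$ and $E^{cu}\cap E^{su}=E^{u}$ handles the other two cases.

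For the first statement, suppose $\gamma_{1}\neq\gamma_{2}$ are two center leaves contained in $\widetilde{\mathcal{F}}^{cs}(x)\cap\widetilde{\mathcal{F}}^{cu}(y)$. By Corollary \ref{liftcomplete} and Lemma \ref{bi-foliated}, the leaf $\widetilde{\mathcal{F}}^{cs}(x)$ has a product structure: every stable leaf in it meets every center leaf in it. Pick a stable leaf $\widetilde{W}^{s}(p)\subset\widetilde{\mathcal{F}}^{cs}(x)$ and let $p_{i}:=\widetilde{W}^{s}(p)\cap\gamma_{i}$ for $i=1,2$; by Lemma \ref{at-most-once} each intersection is a single point, and $p_{1}\neq p_{2}$ since they lie on distinct center leaves. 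Because $\gamma_{1},\gamma_{2}\subset\widetilde{\mathcal{F}}^{cu}(y)$, the stable curve $\widetilde{W}^{s}(p)$ is then a transversal to $\widetilde{\mathcal{F}}^{cu}$ meeting the leaf $\widetilde{\mathcal{F}}^{cu}(y)$ at two distinct points, contradicting Theorem \ref{taut}.

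The remaining two cases are symmetric. If two distinct stable leaves $\ell_{1},\ell_{2}$ lay in $\widetilde{\mathcal{F}}^{cs}(x)\cap\widetilde{\mathcal{F}}^{su}(y)$, then completeness of $\widetilde{\mathcal{F}}^{cs}(x)$ gives a center leaf in $\widetilde{\mathcal{F}}^{cs}(x)$ meeting both $\ell_{1}$ and $\ell_{2}$ (each in one point by Lemma \ref{at-most-once}), producing a transversal to $\widetilde{\mathcal{F}}^{su}$ that hits $\widetilde{\mathcal{F}}^{su}(y)$ twice, again contradicting Theorem \ref{taut}. For $\widetilde{\mathcal{F}}^{cu}(x)\cap\widetilde{\mathcal{F}}^{su}(y)$ one runs the same argument inside $\widetilde{\mathcal{F}}^{cu}(x)$, using the completeness of $\widetilde{\mathcal{F}}^{cu}$ and transversality of center leaves to $\widetilde{\mathcal{F}}^{su}$.

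The only delicate point I foresee is making sure that the curve produced by completeness is genuinely transverse to the second weak foliation (so that Theorem \ref{taut} applies), but this is automatic from the transversality of the splitting $E^{s}\oplus E^{c}\oplus E^{u}$: a stable leaf is transverse to $\widetilde{\mathcal{F}}^{cu}$, an unstable leaf to $\widetilde{\mathcal{F}}^{cs}$, and a center leaf is transverse to $\widetilde{\mathcal{F}}^{su}$. Hence the proof reduces to a clean application of completeness plus the ``transversals meet each leaf once'' property of the lifted taut foliations.
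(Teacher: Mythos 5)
Your proof is correct and follows essentially the same route as the paper: completeness of $\widetilde{\mathcal{F}}^{cs}$ (resp.\ $\widetilde{\mathcal{F}}^{cu}$) produces a stable (resp.\ center) curve joining the two putative leaves of the intersection, and this curve is a transversal meeting the second weak leaf twice, contradicting Theorem \ref{taut}. The paper writes out only the first case and states the others are analogous; your preliminary observation that each intersection is saturated by center/stable/unstable leaves, and your explicit treatment of the remaining two cases, are correct fillings-in of the same argument.
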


\begin{proof}
    Suppose \( \widetilde{\mathcal{F}}^{cs}(x) \cap \widetilde{\mathcal{F}}^{cu}(y) \) contains two center leaves \( L_1, L_2 \). By completeness, \( L_2 \subset W^s(L_1) \), so a stable curve \( \alpha \subset \widetilde{\mathcal{F}}^{cs}(x) \) connects \( L_1 \) and \( L_2 \). Since \( L_1,L_2 \subset \widetilde{\mathcal{F}}^{cu}(y) \), \(\alpha\) intersects \( \widetilde{\mathcal{F}}^{cu}(y) \) at two points, contradicting Lemma \ref{at-most-once}. The other cases follow similarly.
\end{proof}

A flow is \textit{regulating} for a foliation $\mathcal{F}$ if the flow is transverse to $\mathcal{F}$ and any lifted orbit of the flow intersects every leaf of $\mathcal{\widetilde{F}}$. Let \( \phi_t: \mathbb{R} \times M \to M \) be the unit-speed center flow tangent to the oriented bundle \( E^c \).

\begin{prop}\label{>1point}
    The center flow \(\phi\) is regulating for \(\mathcal{F}^{su}\).
\end{prop}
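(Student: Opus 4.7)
The transversality of $\phi$ to $\mathcal{F}^{su}$ is immediate since $E^c$ is transverse to $E^s\oplus E^u$. The substantive claim is the regulating property: for every $x,y\in\widetilde{M}$, the lifted center orbit $\widetilde{W}^c(x)$ meets the su-leaf $F:=\widetilde{\mathcal{F}}^{su}(y)$. By Lemma \ref{embeddedplane}, $F$ is a properly embedded plane separating $\widetilde{M}$ into two open half-spaces $U_+\sqcup U_-$. My plan is to consider the disjoint decomposition
\[
\widetilde{M}=A_+\sqcup A_-\sqcup C,\qquad A_\pm:=\{z:\widetilde{W}^c(z)\subset U_\pm\},\qquad C:=\{z:\widetilde{W}^c(z)\cap F\neq\emptyset\},
\]
observe $F\subset C\neq\emptyset$, and run a clopen argument in the connected space $\widetilde{M}$ to conclude $A_\pm=\emptyset$.

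For the saturation structure of $A_+$: if $z\in A_+$, Lemma \ref{<1center} forces $\widetilde{\mathcal{F}}^{cs}(z)\cap F$ to be either empty or a single stable leaf $\widetilde{W}^s(q)$ with $q\in F$. The latter alternative, combined with the product structure granted by completeness of $\widetilde{\mathcal{F}}^{cs}$ (Lemma \ref{bi-foliated}), would produce $\widetilde{W}^c(z)\cap\widetilde{W}^s(q)\neq\emptyset$, contradicting $\widetilde{W}^c(z)\subset U_+$. Hence $\widetilde{\mathcal{F}}^{cs}(z)\cap F=\emptyset$, and by connectedness $\widetilde{\mathcal{F}}^{cs}(z)\subset U_+$; therefore $\widetilde{\mathcal{F}}^{cs}(z)\subset A_+$, so $A_+$ is $\widetilde{\mathcal{F}}^{cs}$-saturated. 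Symmetrically $A_+$ is $\widetilde{\mathcal{F}}^{cu}$-saturated, and in particular is saturated by $\widetilde{W}^s$- and $\widetilde{W}^u$-leaves.

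Openness of $A_+$: at $z\in A_+$, a local chart trivializes $\widetilde{\mathcal{F}}^{cs}$ as $D^2\times I$ with $\widetilde{W}^u(z)$ appearing as $\{0\}\times I$. The $\widetilde{W}^u$-saturation of $A_+$ gives $\{0\}\times I\subset A_+$; the $\widetilde{\mathcal{F}}^{cs}$-saturation applied at each $(0,t)$ places the whole plaque $D^2\times\{t\}$ in $A_+$. The union of these plaques fills $D^2\times I$, a neighborhood of $z$ contained in $A_+$. Closedness of $A_+$: if $z_n\in A_+$ and $z_n\to z$, continuity of $\widetilde{\mathcal{F}}^c$ gives $\widetilde{W}^c(z_n)\to\widetilde{W}^c(z)$ locally, so $\widetilde{W}^c(z)\subset\overline{U_+}=U_+\cup F$. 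Any transverse crossing of $F$ by $\widetilde{W}^c(z)$ would place points of $\widetilde{W}^c(z)$ in $U_-$, contradicting this inclusion; hence $\widetilde{W}^c(z)\subset U_+$ and $z\in A_+$.

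With $A_+$ clopen in the connected space $\widetilde{M}$, either $A_+=\emptyset$ or $A_+=\widetilde{M}$; the second option is ruled out because $F\subset C\subset\widetilde{M}\setminus A_+$ (each $q\in F$ lies in $\widetilde{W}^c(q)\cap F$, hence in $C$). Symmetrically $A_-=\emptyset$, so $C=\widetilde{M}$ and every lifted center orbit meets $F$. The main obstacle will be the openness step: one must combine the saturations of $A_+$ by the transverse foliations $\widetilde{\mathcal{F}}^{cs}$ and $\widetilde{W}^u$ into a product covering of a full neighborhood of a point of $A_+$. This rests squarely on the leafwise completeness (Lemma \ref{bi-foliated}) and the uniqueness of leafwise intersections (Lemma \ref{<1center}) established earlier in the section.
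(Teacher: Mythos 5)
Your argument is correct, and it runs on exactly the same engine as the paper's proof: completeness of $\widetilde{\mathcal{F}}^{cs}$ and $\widetilde{\mathcal{F}}^{cu}$ (via the product structure of Lemma \ref{bi-foliated} and Corollary \ref{liftcomplete}) forces an su-leaf that meets a center-stable leaf to meet every center leaf inside it, and then a connectedness argument on $\widetilde{M}$ finishes. The organization, however, is dual to the paper's. The paper fixes a center leaf $\gamma$ and studies its su-saturation $\gamma^{su}$, proving it is open and c-saturated by propagating along the corners of an su-path, and then showing that two such sets either coincide or are disjoint. You instead fix an su-leaf $F$, invoke Lemma \ref{embeddedplane} to split $\widetilde{M}$ into two half-spaces, and run a clopen argument on the set $A_+$ of points whose center leaf misses $F$ on one side; the cs- and cu-saturation of $A_+$ is the contrapositive of the paper's key observation. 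What your version buys is the elimination of the su-path bookkeeping (the paper's chain $\gamma^{su}=\gamma_1^{su}=\dots=\gamma_n^{su}$) in favor of an explicit saturation-plus-clopen scheme, at the cost of invoking the separation property of su-leaves, which the paper's proof of this proposition does not need explicitly; the paper's version yields slightly more directly that $\gamma^{su}=\widetilde{M}$ for every center leaf $\gamma$ at once. Both are complete proofs of comparable length resting on the same earlier lemmas, so I would regard yours as a legitimate, mildly streamlined variant rather than a new method.
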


\begin{proof}
    To show \(\phi\) regulates \(\mathcal{F}^{su}\), we prove every center leaf \(\gamma \subset \widetilde{\mathcal{F}}^c\) intersects every \(\mathcal{\widetilde{F}}^{su}\)-leaf.

    By completeness of \(\widetilde{\mathcal{F}}^{cs}\), the center-stable leaf \(\widetilde{\mathcal{F}}^{cs}(\gamma)\) equals the \(s\)-saturation of \(\gamma\). If an \(su\)-leaf \(L \in \widetilde{\mathcal{F}}^{su}\) intersects \(\widetilde{\mathcal{F}}^{cs}(\gamma)\) at \(p \in \widetilde{M}\), then \(L\) contains \(W^s(p)\). By completeness, \(W^s(p)\) intersects \(\gamma\), forcing \(L \cap \gamma \neq \emptyset\). Similarly, intersections with \(\widetilde{\mathcal{F}}^{cu}(\gamma)\) imply intersections with \(\gamma\).

    Define \(\gamma^{su} := \bigcup_{x \in \gamma} \widetilde{\mathcal{F}}^{su}(x)\). For \(y \in \gamma^{su}\), let \(x = \widetilde{\mathcal{F}}^{su}(y) \cap \gamma\). Construct an \(su\)-path \(x = x_0, x_1, \dots, x_n = y\) with stable/unstable segments. Let \(\gamma_i = \widetilde{\mathcal{F}}^c(x_i)\). By the previous argument, \(\widetilde{\mathcal{F}}^{su}(y)\) intersects \(\gamma_1\), so \(\gamma^{su} = \gamma_1^{su}\). Iterating, \(\gamma^{su} = \gamma_n^{su}\). Thus, \(\gamma^{su}\) is \(c\)-saturated.

    Let \(\hat{\gamma}\) be another center leaf. If \(z \in \gamma^{su} \cap \hat{\gamma}^{su}\), then \(\widetilde{\mathcal{F}}^{su}(z) \subset \gamma^{su} \cap \hat{\gamma}^{su}\). Since \(\hat{\gamma}^{su}\) is \(c\)-saturated, it contains \(\gamma\) (and vice versa). Hence, \(\gamma^{su} = \hat{\gamma}^{su}\) when non-disjoint.

    The set \(\gamma^{su}\) is open and \(c\)-saturated. By connectivity of \(\widetilde{M}\), \(\gamma^{su} = \widetilde{M}\). Thus, every \(\gamma\) intersects every \(\mathcal{\widetilde{F}}^{su}\)-leaf, proving \(\phi\) regulates \(\mathcal{F}^{su}\).
\end{proof}

\begin{cor}\label{=1point}
    Each center leaf in \( \widetilde{\mathcal{F}}^c \) intersects every \( \mathcal{\widetilde{F}}^{su} \)-leaf at exactly one point. Consequently, \( \mathcal{F}^{su} \) is \( \mathbb{R} \)-covered.
\end{cor}

\begin{proof}
    By Proposition \ref{>1point} and Lemma \ref{at-most-once}, intersections are unique. Then the leaf space of \( \widetilde{\mathcal{F}}^{su} \) is homeomorphic to \( \mathbb{R} \).
\end{proof}


\subsection{Gromov hyperbolicity of leaves}

A foliation is \textit{minimal} if every leaf is dense in \( M \). A set \( \Lambda \) is a \textit{minimal set} if it is a sublamination where every leaf is dense in \( \Lambda \).

\begin{thm}{\cite[Theorem 4.1.3]{HectorHirsch}}\label{hectorhirsch}
    Let \( \mathcal{F} \) be a codimension-one foliation without compact leaves on a compact manifold \( M \). Then \( \mathcal{F} \) has finitely many minimal sets.
\end{thm}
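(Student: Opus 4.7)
The plan is to combine the Haefliger trichotomy for minimal sets of codimension-one foliations with a compactness argument on a transversal, reducing the finiteness statement to a structural result of Dippolito type.

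I would first invoke Haefliger's classification: every minimal set of $\mathcal{F}$ is either (i) all of $M$, (ii) a single compact leaf, or (iii) an \emph{exceptional} minimal set, whose trace on every small transversal is locally a Cantor set. The hypothesis rules out (ii), and case (i) contributes at most one minimal set (in a connected manifold, a minimal set with nonempty interior must equal $M$). It therefore suffices to bound the number of exceptional minimal sets.

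Next, I would argue by contradiction, assuming an infinite sequence $\{\Lambda_n\}_{n \ge 1}$ of pairwise distinct exceptional minimal sets. Cover $M$ by finitely many foliation charts with associated transversals $T_1, \ldots, T_N$. The pigeonhole principle produces a single transversal $T$ meeting infinitely many $\Lambda_n$; after passing to that subsequence, the traces $K_n := \Lambda_n \cap T$ are pairwise disjoint nonempty closed subsets of the compact transversal $T$. By compactness of the Hausdorff topology on closed subsets of $T$, one can extract a further subsequence with $K_n \to K_\infty$ for some closed $K_\infty \subseteq T$; the saturation of $K_\infty$ is then closed and saturated in $M$, hence contains at least one minimal set $\Lambda^\star$.

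Finally, I would derive a contradiction from the accumulation of the distinct $\Lambda_n$ on $\Lambda^\star$. The transverse holonomy pseudogroup of $\mathcal{F}$ acts on $T$ with $\Lambda^\star \cap T \subseteq K_\infty$ appearing as a limit of pairwise disjoint invariant Cantor sets $K_n$. Invoking the Dippolito decomposition of $M \setminus \Lambda^\star$ into finitely many ``octopus'' pieces (each a compact core with finitely many non-compact arms accumulating on $\Lambda^\star$), almost every $K_n$ would have to lie inside a single arm. Within one arm the transverse holonomy is essentially a contraction whose invariant closed sets are uniquely determined up to finite index, which forces infinitely many $K_n$ to coincide and contradicts their pairwise disjointness.

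The hardest step, and the main technical obstacle, is the Dippolito decomposition just used: producing the finite octopus structure of $M \setminus \Lambda^\star$ and analyzing holonomy on its arms. This relies on the compactness of $M$ together with the exceptional (locally Cantor) nature of $\Lambda^\star$. In the $C^2$ setting one can invoke Sacksteder's theorem to find a leaf in $\Lambda^\star$ with contracting linear holonomy; for $C^0$ foliations one uses Dippolito's and Hector's topological analogues. Once this structural result is granted, the remaining steps are formal compactness and pigeonhole.
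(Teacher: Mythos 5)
You should first note that the paper does not prove this statement at all: it is quoted verbatim as \cite[Theorem 4.1.3]{HectorHirsch}, so there is no in-paper argument to compare against, and your proposal has to stand on its own.

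Your reduction is sound as far as it goes: the trichotomy (minimal set $=M$, a compact leaf, or exceptional), the observation that a minimal set with interior equals $M$ so type (i) contributes at most one, the pigeonhole onto one of finitely many chart transversals, and the extraction of a Hausdorff limit whose saturation closure contains a minimal set $\Lambda^\star$ are all correct (minor point: the saturation of $K_\infty$ need not be closed, so you must pass to its closure before invoking the existence of a minimal set inside it). The genuine gap is the final contradiction. First, ``almost every $K_n$ lies inside a single arm'' is unjustified: $M\setminus\Lambda^\star$ may have infinitely many components, so the connected sets $\Lambda_n$ need not concentrate in one component, and even inside one component $U$ there is no reason a $\Lambda_n$ avoids the compact nucleus of the octopus decomposition of $\hat U$ --- indeed the standard use of Dippolito's decomposition is to show that a closed invariant set of $\mathcal{F}|_U$ \emph{must} meet the nucleus, which is the opposite of what you need. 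Second, the assertion that ``within one arm the transverse holonomy is essentially a contraction whose invariant closed sets are uniquely determined up to finite index'' is not a theorem one can invoke: an arm is a foliated $I$-bundle over a non-compact base, its holonomy need not contract, and no uniqueness statement for its invariant closed sets holds in general. Your fallback via Sacksteder is also unavailable in the setting where the paper uses the result, since $\mathcal{F}^{su}$ is only a $C^0$ foliation and Sacksteder requires $C^2$ transverse regularity.

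So the architecture (compactness, pigeonhole, accumulation onto a limit minimal set) matches the shape of the textbook argument, but the step that actually rules out accumulation of distinct exceptional minimal sets onto $\Lambda^\star$ is missing. What is needed there is a statement of the form ``an exceptional minimal set admits a saturated open neighborhood in which it is the unique minimal set,'' proved from the structure of the holonomy pseudogroup on the gaps of the Cantor set $\Lambda^\star\cap T$ and the finiteness of border leaves of the complementary components --- not from a contraction property of the arms. As written, the proof does not close.
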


\begin{lem}\label{1minimal}
    If \( \pi_1(M) \) is not (virtually) solvable, the foliations \( \mathcal{F}^{cs} \), \( \mathcal{F}^{cu} \), and \( \mathcal{F}^{su} \) are minimal.
\end{lem}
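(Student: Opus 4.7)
The argument is by contradiction for each of the three foliations, reducing each case to a no-periodic-point obstruction. Since $NW(f)=M$ and $f$ has no periodic points, Theorem~\ref{1integrable} gives that $f$ is transitive, and by Theorem~\ref{hectorhirsch} each of $\mathcal{F}^{cs}, \mathcal{F}^{cu}, \mathcal{F}^{su}$ (which have no compact leaves by Corollary~\ref{1nocompact}) has only finitely many minimal sets. Transitivity forces $f$ to permute these finite families, so a single iterate $f^k$ preserves each minimal set of each of the three foliations. Note that $NW(f^k)=M$ and that $f^k$ still has no periodic points.

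Suppose first that $\mathcal{F}^{su}$ is not minimal and pick a proper $f^k$-invariant minimal set $\Lambda\subsetneq M$ of $\mathcal{F}^{su}$. Apply Proposition~\ref{1sublamination} with $f^k$ in place of $f$, which is legitimate because $NW(f^k)=M$: the boundary leaves of $\Lambda$ are $f^k$-periodic and support a dense set of $f^k$-periodic points, contradicting the no-periodic-points hypothesis. Hence $\mathcal{F}^{su}$ is minimal.

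Suppose next that $\mathcal{F}^{cs}$ is not minimal (the case $\mathcal{F}^{cu}$ is symmetric) and let $\Lambda\subsetneq M$ be a proper $f^k$-invariant minimal set. The plan is to show that $\Lambda$ is in fact $u$-saturated, so that combined with $s$-saturation it becomes an $f^k$-invariant $su$-sublamination, contradicting the minimality of $\mathcal{F}^{su}$ just established. Since $\Lambda$ is $cs$-saturated it is $c$-saturated; its lift $\widetilde\Lambda$ to $\widetilde M$ is a union of complete $\widetilde c$-leaves, and by Corollary~\ref{=1point} each $\widetilde c$-leaf meets every $\widetilde{su}$-leaf in exactly one point, so every lifted $su$-leaf meets $\widetilde\Lambda$; projecting, every $su$-leaf of $M$ intersects $\Lambda$. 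If $\Lambda$ were not $u$-saturated, there would exist $p\in\Lambda$ and $q\in W^u(p)\setminus\Lambda$; the first exit point $r$ of the $u$-arc from $p$ to $q$ lies in $\Lambda\cap\partial\Lambda$, and the $cs$-leaf $L=\mathcal{F}^{cs}(r)$ is a boundary leaf of $\Lambda$ with a transverse $u$-direction pointing into the complement of $\Lambda$. A recurrence argument applied to the closed $f^k$-invariant set of boundary $cs$-leaves produces an $f^{k'}$-invariant boundary leaf $L_0$, which by Lemma~\ref{cylin-plane} is a cylinder or plane; on $L_0$ the restriction of $f^{k'}$ is a diffeomorphism contracting the stable direction. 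The outward-pointing $u$-expansion transverse to $L_0$, combined with the one-sided accumulation of $\Lambda$ on $L_0$, forces a fixed $s$-subleaf of $f^{k'}|_{L_0}$, on which the $s$-contraction produces a fixed point of $f^{k'}$ -- contradicting the no-periodic-points hypothesis. Hence $\Lambda$ is $u$-saturated, completing the contradiction and proving minimality of $\mathcal{F}^{cs}$.

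The main obstacle lies in producing the fixed $s$-subleaf inside the boundary leaf $L_0$. Because the family of boundary $cs$-leaves can be uncountable (e.g.\ for an exceptional minimal set), one cannot simply iterate $f^k$ to fix a specific leaf, and the recurrence argument has to be carried out on the boundary as a closed $f^k$-invariant subset of $M$. Equally, the induced action of $f^{k'}$ on the 1-dimensional stable leaf space of $L_0$ must be shown to admit a fixed point, ruling out a fixed-point-free rotation on a possible $S^1$ quotient; this is where the outward $u$-expansion through $r$ and the asymmetry of $\Lambda$ across $L_0$ become essential.
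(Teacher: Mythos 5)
Your argument for $\mathcal{F}^{su}$ is essentially the paper's own: finitely many minimal sets by Theorem~\ref{hectorhirsch}, pass to an iterate that fixes each one, and invoke Proposition~\ref{1sublamination} to get dense periodic points on boundary leaves. That part is fine. The problem is the $\mathcal{F}^{cs}$ (and $\mathcal{F}^{cu}$) case, where two essential steps are asserted but not proved, and you flag both yourself. First, the ``recurrence argument'' producing an $f^{k'}$-invariant boundary leaf $L_0$ of the proper minimal set $\Lambda$ is not available: Proposition~\ref{1sublamination} gives periodicity of boundary leaves only for $su$-sublaminations (its proof rests on the joint integrability of $E^s$ and $E^u$ along $\Gamma(f)$ and the transverse center direction), and there is no analogous statement for a $cs$-lamination; knowing that the set of boundary leaves is closed and $f^k$-invariant does not by itself yield a periodic leaf. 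Second, even granting an invariant cylinder or plane leaf $L_0$, you still need an $f^{k'}$-invariant stable leaf inside it before the contraction argument produces a fixed point, and you correctly note that a fixed-point-free action on the stable leaf space of $L_0$ must be excluded --- but no mechanism for doing so is supplied. As written, the implication ``proper minimal set of $\mathcal{F}^{cs}$ $\Rightarrow$ periodic point'' is not established, so the contradiction never materializes.

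The intended argument is much shorter and avoids boundary leaves entirely. The union of all minimal sets of $\mathcal{F}^{cs}$ is a nonempty compact $f$-invariant $cs$-saturated set; thickening it by local unstable plaques gives a neighborhood $U$ with $f^{-1}(\overline{U})\subset U$, so this union is a repeller. If it were proper, the open set $U\setminus f^{-1}(\overline{U})$ would be nonempty and would consist of wandering points, contradicting $NW(f)=M$. Hence the union of minimal sets is all of $M$; since distinct minimal sets are disjoint and closed and $M$ is connected, there is a single minimal set equal to $M$, i.e.\ $\mathcal{F}^{cs}$ is minimal. The symmetric argument with $f^{-1}$ handles $\mathcal{F}^{cu}$. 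This is the route the paper takes (following \cite[Proposition 6.7]{2020Seifert}); your detour through $u$-saturation and Corollary~\ref{=1point}, besides being incomplete, proves nothing that the repeller argument does not already give.
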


\begin{proof}
    By Corollary \ref{1nocompact}, \( \mathcal{F}^{su} \) has no compact leaves. Theorem \ref{hectorhirsch} implies \( \mathcal{F}^{su} \) has finitely many minimal sets \( S_1, \dots, S_k \). Their union \( S = \bigcup_{i=1}^k S_i \) is an \( f \)-invariant sublamination. Each \( S_i \) is \( f \)-periodic (as minimal sets cannot intersect). Let \( N \) be their common period; after iterating \( f^N \), assume each \( S_i \) is \( f \)-invariant. By Proposition \ref{1sublamination}, boundary leaves of \( S_i \) contain dense periodic points—contradicting aperiodicity. Thus, \( S_i = M \), proving \( \mathcal{F}^{su} \) is minimal.

    For \( \mathcal{F}^{cs} \) and \( \mathcal{F}^{cu} \), the union of their minimal sets is a compact invariant repeller or attractor. Since \( NW(f) = M \), these foliations must be minimal.
\end{proof}

A codimension-\( k \) foliation \( \mathcal{F} \) has a \emph{(holonomy) invariant transverse measure} \( \mu \) if \( \mu \) is a non-trivial measure on \( k \)-dimensional transversals, invariant under holonomies, and finite on compacts. The \emph{support} of \( \mu \) is the closure of points where transversals through them have \( \mu > 0 \). See \cite[Chapters 11--12]{CC00I} for details.

\begin{thm}{\cite[Chapter X, Theorem 2.3.3]{HectorHirsch}}\label{HH}
    Let \( \mathcal{F} \) be a minimal codimension-one foliation without compact leaves. If \( \mathcal{F} \) has an invariant transverse measure, this measure is unique up to scaling.
\end{thm}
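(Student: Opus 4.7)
The plan is to reduce the uniqueness question to a rigidity statement about minimal pseudogroups of translations on the real line, following the Plante--Sacksteder strategy. First, I would fix a \emph{complete transversal} $T$, i.e. a finite disjoint union of short arcs transverse to $\mathcal{F}$ that meets every leaf (which exists because $\mathcal{F}$ is taut: absence of compact leaves plus the fact that $M$ is compact guarantees tautness of a codimension-one foliation). Let $\mathcal{H}$ be the holonomy pseudogroup of $\mathcal{F}$ acting on $T$. Minimality of $\mathcal{F}$ implies that every $\mathcal{H}$-orbit is dense in $T$. Any invariant transverse measure $\mu$ restricts to a locally finite, $\mathcal{H}$-invariant Borel measure $\mu_T$ on $T$. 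By minimality $\mathrm{supp}(\mu_T)=T$, and $\mu_T$ has no atoms: an atom at $x$ would, by $\mathcal{H}$-invariance, force an atom of the same mass at every point of the (dense) orbit of $x$, contradicting local finiteness of $\mu_T$.

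Next I would turn $\mu_T$ into a translation structure on each component of $T$. Fixing a basepoint $x_0$ in a component $\tau\subset T$, define $\Phi_\mu(x)=\mu_T([x_0,x])$. Since $\mu_T$ is atomless and of full support, $\Phi_\mu$ is an orientation-preserving homeomorphism from $\tau$ onto an interval $I_\mu\subset\mathbb{R}$. The crucial point is that holonomy invariance of $\mu$ is equivalent to saying that for every $h\in\mathcal{H}$ defined on a subarc of $\tau$, the conjugated map $\Phi_\mu\circ h\circ\Phi_\mu^{-1}$ is the restriction of a translation of $\mathbb{R}$. Thus $\Phi_\mu$ conjugates $\mathcal{H}$ to a pseudogroup $\mathcal{H}_\mu$ of translations acting minimally on $I_\mu$.

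Now let $\mu,\nu$ be two invariant transverse measures and form $\psi=\Phi_\nu\circ\Phi_\mu^{-1}\colon I_\mu\to I_\nu$. It is an orientation-preserving homeomorphism that conjugates $\mathcal{H}_\mu$ (a translation pseudogroup) to $\mathcal{H}_\nu$ (another translation pseudogroup), both minimal. For every translation parameter $g\in\mathbb{R}$ realised by an element of $\mathcal{H}_\mu$ and every $t\in I_\mu$ for which $t+g\in I_\mu$, the increment $\psi(t+g)-\psi(t)$ depends only on $g$ and not on $t$. Let $G\subset\mathbb{R}$ be the group generated by the parameters appearing in $\mathcal{H}_\mu$. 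Minimality gives that $G$ is dense in $\mathbb{R}$. The assignment $g\mapsto\psi(t+g)-\psi(t)$ is a monotone additive map $G\to\mathbb{R}$, and since $\psi$ is continuous and monotone, it extends to a continuous monotone homomorphism $\mathbb{R}\to\mathbb{R}$, hence to a linear map $g\mapsto\lambda g$ for some $\lambda>0$. Therefore $\psi(t)=\lambda t+c$ on each component of $I_\mu$.

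Affineness of $\psi$ translates back to $\nu_T=\lambda\mu_T$ on each component of $T$. The same constant $\lambda$ occurs on every component because any two components are connected by elements of $\mathcal{H}$, and these holonomies are conjugated to translations by both $\Phi_\mu$ and $\Phi_\nu$; propagating the identity $\nu_T=\lambda\mu_T$ through the holonomy pseudogroup, which spreads the transversal data over all of $M$ by minimality, yields $\nu=\lambda\mu$ globally. The main obstacle is the affineness step, where one has to deal with the fact that $\mathcal{H}_\mu$ consists of only partially defined translations and one must verify that the corresponding translation-parameter set is genuinely dense in $\mathbb{R}$ (not merely in a bounded region). Both points are handled by exploiting minimality together with tautness---the non-existence of compact leaves being exactly what prevents atoms, stops the holonomy orbits from being discrete, and allows the transversal construction to go through cleanly.
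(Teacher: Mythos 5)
This statement is quoted verbatim from Hector--Hirsch (Chapter X, Theorem 2.3.3); the paper supplies no proof of its own, so there is nothing internal to compare against. Your argument is the standard one for this result --- restrict the transverse measure to a complete transversal, observe that minimality forces full support and local finiteness forces atomlessness, use the measure of intervals as a developing map conjugating the holonomy pseudogroup to a pseudogroup of local translations of $\mathbb{R}$, and then show that a homeomorphism intertwining two minimal translation pseudogroups is affine --- and the outline is sound. Two small corrections: tautness is not what kills atoms or discreteness (minimality plus local finiteness does both, and a complete transversal exists from any finite foliated atlas regardless of tautness); and the genuinely delicate point in the affineness step is not only density of the translation-parameter group but the well-definedness of the correspondence $g\mapsto g'$, since two holonomy elements with the same $\mu$-parameter but disjoint domains must be shown to have the same $\nu$-parameter, which again requires chaining holonomies through dense orbits. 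You flag this obstacle yourself, and it is exactly the technical content of the textbook proof, so the proposal is acceptable as a sketch.
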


\begin{prop}\label{transversemeasure}
    If \( \mathcal{F}^{su} \) has an invariant transverse measure, \( \pi_1(M) \) is (virtually) solvable.

\end{prop}

\begin{proof}
    Let \( \mu \) be an invariant transverse measure. Suppose \( \pi_1(M) \) is not virtually solvable. By Lemma \ref{1minimal}, \( \mathcal{F}^{su} \) is minimal, so \( \mu \) has full support. The pullback \( \nu = \mu \circ f \) is also invariant. By Theorem \ref{HH}, \( \nu = \lambda \mu \) for some \( \lambda \in \mathbb{R} \).

    If \( |\lambda| \neq 1 \), \( f \) contracts/expands \( \mu \), implying \( E^c \) is contracted/expanded. Thus, \( f \) is conjugate to Anosov and \(M=\mathbb{T}^3\). If \( |\lambda| = 1 \), \( f \) has a topologically neutral center \cite{BZ20}. By transitivity (Theorem \ref{1integrable}), \cite[Theorem C]{BZ20} implies \( f \) is conjugate to a skew product over Anosov (on nilmanifolds) or a non-accessible discretized Anosov flow. The latter case only may occur when \( \pi_1(M) \) is virtually solvable \cite{FP_hyperbolic}.
\end{proof}

\begin{thm}{\cite{Sullivan76, Gromov87, Candel93}}\label{SulliGrom}
    Let \( \mathcal{F} \) be a codimension-one foliation without compact leaves on a closed 3-manifold. Then either:
    \begin{itemize}
        \item \( \mathcal{F} \) has an invariant transverse measure; or
        \item All leaves of \( \mathcal{F} \) are Gromov hyperbolic.
    \end{itemize}
\end{thm}

\begin{cor}\label{hyperbolicleaf}
    If \( f \) has no periodic points, \( NW(f) = M \), and \( \pi_1(M) \) is not virtually solvable, then \( \mathcal{F}^{su} \)-leaves are Gromov hyperbolic.
\end{cor}

\begin{cor}
    Under the hypotheses of Corollary \ref{hyperbolicleaf}, the leaves of \( \mathcal{F}^{cs} \), \( \mathcal{F}^{cu} \), and \( \mathcal{F}^{su} \) are Gromov hyperbolic.
\end{cor}

\begin{proof}
    Immediate from Lemma \ref{1minimal}, Theorem \ref{SulliGrom}, and \cite[Theorem 5.1]{FP_hyperbolic}.
\end{proof}

\section{Non-dense Limit Set}\label{nondenselimit}

In subsequent sections, we assume \( f: M \to M \) is a partially hyperbolic diffeomorphism of a compact 3-manifold with no periodic points, \( f \) is not accessible, \( NW(f) = M \), and \( \pi_1(M) \) is not virtually solvable. By Corollary \ref{1nocompact} and Proposition \ref{1complete}, the foliations \( \mathcal{F}^{cs} \) (tangent to \( E^c \oplus E^s \)) and \( \mathcal{F}^{cu} \) (tangent to \( E^c \oplus E^u \)) are complete. Each leaf of \( \mathcal{F}^{cs} \) and \( \mathcal{F}^{cu} \) is either a cylinder or a plane. By Lemma \ref{1minimal} and Corollary \ref{hyperbolicleaf}, the foliation \( \mathcal{F}^{su} \) (tangent to \( E^s \oplus E^u \)) is minimal, and its leaves are Gromov hyperbolic.

On the universal cover \( \widetilde{M} \), each leaf \( L \in \widetilde{\mathcal{F}}^{su} \) is a Poincaré disk with an ideal circle at infinity denoted by \( \partial_\infty L \). By \cite{Candel93}, there exists a metric on \( M \) restricting to a hyperbolic metric on every \( \mathcal{F}^{su} \)-leaf. The fundamental group of a hyperbolic leaf is generated by homotopy classes of simple closed curves lifting to geodesics in \( \widetilde{M} \). By Theorem \ref{rosenberg}, since \( M \) is not \( \mathbb{T}^3 \), there exists at least one leaf with non-trivial fundamental group. The corresponding deck transformations act on lifted leaves as Möbius transformations preserving the hyperbolic metric and geodesics. Each non-trivial Möbius transformation is hyperbolic, fixing two endpoints on \( \partial_\infty L \) and preserving an invariant geodesic (its \emph{axis}).

\subsection{Single ideal point}

By Corollary \ref{=1point}, every center leaf in \(\widetilde{\mathcal{F}}^c\) intersects any leaf \(L \in \widetilde{\mathcal{F}}^{su}\) at exactly one point. The completeness of \(\widetilde{\mathcal{F}}^{cs}\) (Corollary \ref{liftcomplete}) ensures that the intersection of each center-stable leaf \(\widetilde{\mathcal{F}}^{cs}\) with \(L\) is a stable leaf. Thus, \(\widetilde{\mathcal{F}}^{cs} \cap L\) coincides with the stable foliation restricted to \(L\). We denote by \(\widetilde{\Lambda}^s_L\) and \(\widetilde{\Lambda}^u_L\) the intersections of \(L\) with \(\widetilde{\mathcal{F}}^{cs}\) and \(\widetilde{\mathcal{F}}^{cu}\), respectively, and by \(\partial_\infty L\) the ideal boundary of \(L \in \widetilde{\mathcal{F}}^{su}\). We analyze the asymptotic behavior of these foliations.

\begin{defn}
    The \emph{cylinder at infinity} \(\mathcal{A}\) is defined as \(\mathcal{A} := \bigcup_{F \in \widetilde{\mathcal{F}}^{su}} \partial_\infty F\), the union of ideal boundaries of all leaves in \(\widetilde{\mathcal{F}}^{su}\).
\end{defn}

Following \cite{Calegari00, Fenley02}, we topologize \(\mathcal{A}\): For a leaf \(F \in \widetilde{\mathcal{F}}^{su}\) and \(x \in F\), geodesic rays from \(x\) define a homeomorphism between the unit tangent circle at \(x\) and \(\partial_\infty F\). Let \(\tau \in \widetilde{\mathcal{F}}^c\) contain \(x\); by Corollary \ref{=1point}, \(\tau\) intersects each leaf \(L \in \widetilde{\mathcal{F}}^{su}\) at a unique point \(\tau(L)\). Geodesic rays from \(\tau(L)\) in \(L\) correspond to points in \(\partial_\infty L\), inducing a homeomorphism between the unit tangent circle at \(\tau(L)\) and \(\partial_\infty L\). The union of these structures over \(\tau\) gives \(\mathcal{A}\) the topology of \(S^1 \times \mathbb{R}\). This extends to \(\widetilde{M} \cup \mathcal{A} \approx D^2 \times \mathbb{R}\), where \(D^2 \times \{t\}\) corresponds to \(L \cup \partial_\infty L\) for \(t \in \mathbb{R}\).

For a leaf \(L \in \widetilde{\mathcal{F}}^{su}\), basepoint \(p \in L\), and ideal interval \(I \subset \partial_\infty L\), the \emph{wedge} \(W_p(I)\) consists of geodesic rays from \(p\) to \(I\). The \emph{complementary wedge} is \(W_p(\partial_\infty L \setminus I)\).

We prove that every ray in \(\widetilde{\Lambda}^s_F\) converges to a single ideal point in \(\partial_\infty F\) for \(F \in \widetilde{\mathcal{F}}^{su}\). Levitt \cite{Levitt83} established this for Reebless foliations on hyperbolic surfaces, and Fenley \cite{Fenley09} generalized it to foliations with transverse pseudo-Anosov flows. We adapt these ideas to partially hyperbolic diffeomorphisms using the following lemma.

\begin{lem}\label{distinguished}
    Let \( L \in \widetilde{\mathcal{F}}^{su} \) and let \( l \subset \widetilde{\Lambda}^s_L \) be a ray converging to a non-trivial ideal interval \( I \subset \partial_\infty L \). Then for any \( L_0 \in \widetilde{\mathcal{F}}^{su} \), there exists \( a \in \partial_\infty L_0 \) such that every closed segment \( I_0 \subset \partial_\infty L_0 \setminus \{a\} \) is accumulated by subsegments of a ray \( l_0 \subset \widetilde{\Lambda}^s_{L_0} \).
\end{lem}

\begin{proof}
    Let \( l_i \subset l \) be arcs converging to \( I \). Choose \( p_i \in l_i \) such that \( p_i \to \xi \in \text{int}(I) \). By passing to a subsequence, deck transformations \( g_i \in \pi_1(M) \) satisfy \( g_i(p_i) \to p_0 \in L_0 \), where \( L_0 \in \widetilde{\mathcal{F}}^{su} \). For any \( p_0^1 \in L_0 \), let \( d_1 = d_{L_0}(p_0, p_0^1) \). The circles \( c_i^1 \subset L \) of radius \( d_1 \) centered at \( p_i \) satisfy \( g_i(c_i^1) \to c_0^1 \subset L_0 \), where \( c_0^1 \) is the circle centered at \( p_0^1 \) of radius \(d_1\). Choosing \( p_i^1 \in c_i^1 \) with \( g_i(p_i^1) \to p_0^1 \), we have \( p_i^1 \to \xi \) (see Figure~\ref{fig:single1}). Similarly, for \( p_0^j \in L_0 \) with \( p_0^j \to  p_0^\infty  \in \partial_\infty L_0 \), select \( p_i^j \in L \) such that \( g_i(p_i^j) \to p_0^j \) and \( p_i^j \to \xi \). By the diagonal argument, \( p_i^i \to \xi \) and \( g_i(p_i^i) \to p_0^\infty \in \partial_\infty L_0 \).

    Define \( S((g_i), \xi) = \{\eta \in \widetilde{M} \cup \mathcal{A} \mid \exists \eta_i \in L, \eta_i \to \xi,\ g_i(\eta_i) \to \eta\} \). Then \( S((g_i), \xi) = L_0 \cup \partial_\infty L_0 \). By minimality (Lemma \ref{1minimal}), \( \pi(S((g_i), \xi)) \) is dense in \( M \), implying \( S(\xi) := \bigcup_{(g_i)} S((g_i), \xi) = \widetilde{M} \cup \mathcal{A} \).

    \begin{figure}[htb]
        \centering
        \includegraphics[width=0.8\textwidth]{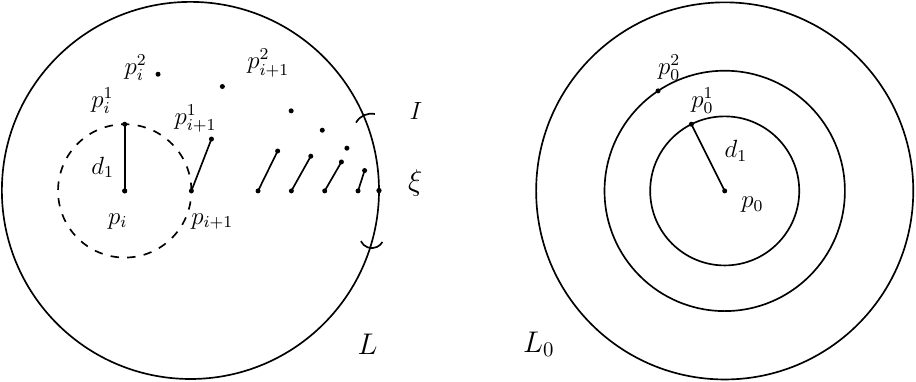}
        \caption{Deck transformations \( g_i \) map sequences in \( L \) converging to \( \xi \) onto \( L_0 \cup \partial_\infty L_0 \).}
        \label{fig:single1}
    \end{figure}

    Consider the angle \( \angle_{p_i}I \) at \( p_i \in L \). As \( p_i \to \xi \), \( \angle_{p_i}I \to 2\pi \). Deck transformations preserve angles, so \( \angle_{g_i(p_i)}g_i(I) \to 2\pi \), forcing \( \angle_{g_i(p_i)}(\partial_\infty g_i(L) \setminus g_i(I)) \to 0 \). Thus, \( W_{g_i(p_i)}(\partial_\infty g_i(L) \setminus g_i(I)) \) collapses to a geodesic ray in \( L_0 \) from \( p_0 \) to \( a \in \partial_\infty L_0 \), and \( g_i(I) \to \partial_\infty L_0 \setminus \{a\} \) (see Figure~\ref{fig:single2}).

    \begin{figure}[htb]
        \centering
        \includegraphics[width=0.8\textwidth]{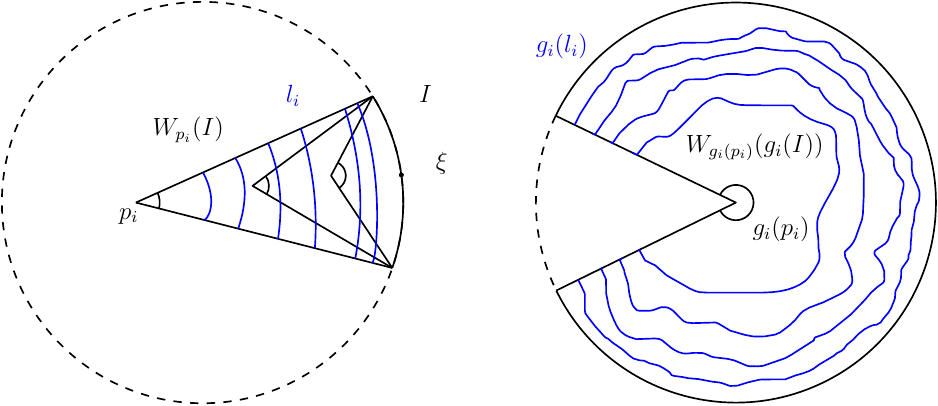}
        \caption{Angles and wedges under deck transformations. Complementary wedges collapse to a single ideal point \( a \).}
        \label{fig:single2}
    \end{figure}

    For any closed segment \( I_0 \subset \partial_\infty L_0 \setminus \{a\} \), there exist stable segments \( s_{n_i}^j \subset \widetilde{\Lambda}^s_{g_i(L)} \) converging to \(I_0 \). Projecting along center leaves, we obtain stable segments in \( \widetilde{\Lambda}^s_{L_0} \) approximating \( I_0 \).
\end{proof}

\begin{prop}\label{singlelimit}
	
	For any leaf $F$ of $\widetilde{\mathcal{F}}^{su}$, each ray $l$ in a leaf of $\widetilde{\Lambda}^{s}_{F}$ ($\widetilde{\Lambda}^{u}_F$) accumulates at a single point of $\partial_{\infty }F$.
\end{prop}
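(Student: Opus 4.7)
The plan is to argue by contradiction via Lemma \ref{distinguished}. Assume some ray $l$ in a leaf of $\widetilde{\Lambda}^s_F$ has more than one ideal accumulation point in $\partial_\infty F$. Because the accumulation set of a ray in a Gromov hyperbolic disk is closed and connected in the ideal circle, it would contain a non-trivial closed arc $I \subset \partial_\infty F$, placing us in the hypothesis of Lemma \ref{distinguished}. The lemma then produces, for every leaf $L_0 \in \widetilde{\mathcal{F}}^{su}$, a distinguished ideal point $a(L_0) \in \partial_\infty L_0$ such that subsegments of stable rays in $\widetilde{\Lambda}^s_{L_0}$ accumulate on every closed arc of $\partial_\infty L_0 \setminus \{a(L_0)\}$. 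The case of rays in $\widetilde{\Lambda}^u_F$ is symmetric, so I focus on the stable one.

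The first task is to show that $a(L_0)$ is uniquely determined by $L_0$, which will upgrade $L_0 \mapsto a(L_0)$ to a canonical assignment. Uniqueness should follow from the one-dimensional topology of the stable foliation restricted to $L_0$: if two distinct distinguished points $a_1 \neq a_2$ both satisfied the conclusion of Lemma \ref{distinguished}, then stable subsegments in $\widetilde{\Lambda}^s_{L_0}$ would accumulate on closed arcs on both sides of each of $a_1, a_2$, and organizing them in $L_0$ would force crossings among leaves of $\widetilde{\Lambda}^s_{L_0}$ that contradict the fact that distinct stable leaves in $L_0$ are disjoint. Once uniqueness is in place, the section $L_0 \mapsto a(L_0)$ is canonical, and hence equivariant under every deck transformation of $\widetilde{M}$ and under any lift of $f$; continuity of the section in the cylinder $\mathcal{A}$ (with the topology built from the regulating center flow of Proposition \ref{>1point}) then follows from the continuity of the stable foliation.

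The final step is to derive a contradiction from the existence of such a canonical equivariant section. It embeds the leaf space $\widetilde{\mathcal{F}}^{su} \cong \mathbb{R}$ as a $\pi_1(M)$-invariant curve in the cylinder $\mathcal{A}$ that descends to a canonical object on $M$. Since $\pi_1(M)$ is not virtually solvable and all three weak foliations are minimal (Lemma \ref{1minimal}), such a canonical object is incompatible with the dynamics: one expects to exhibit a deck transformation acting on some su-leaf as a hyperbolic M\"obius transformation whose pair of ideal fixed points is disjoint from $a(L_0)$, contradicting equivariance. The main obstacle is the canonicity step, i.e.\ extracting uniqueness of $a(L_0)$ from Lemma \ref{distinguished}, since the lemma describes behaviour off of $a(L_0)$ rather than characterizing the distinguished point itself.
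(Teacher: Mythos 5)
Your opening move coincides with the paper's: argue by contradiction, note that the accumulation set is a non-trivial arc, and invoke Lemma \ref{distinguished} to get, on every leaf $L_0$, a point $a\in\partial_\infty L_0$ off of which stable rays accumulate densely. After that the two arguments diverge, and your continuation has genuine gaps. The paper does not try to make $L_0\mapsto a(L_0)$ canonical at all; instead it fixes one leaf $L$ with $\pi(L)$ of non-trivial fundamental group (which exists by Theorem \ref{rosenberg}) and runs a dichotomy on the \emph{unstable} subfoliation $\widetilde{\Lambda}^u_L$: either some unstable ray has an ideal point different from $a$, in which case transversality, Novikov's theorem and the fact that a transversal to $\widetilde{\mathcal{F}}^s$ meets each leaf at most once give a contradiction; or every unstable leaf has $a$ as its unique ideal point, which forces $\pi(L)$ to be a cylinder whose axis has $a$ as an endpoint, and the contradiction comes from an orientation argument with a closed curve built from stable and unstable segments. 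Your proposal never touches $\widetilde{\Lambda}^u_L$, which is the engine of the actual proof.

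The concrete failures in your plan are these. First, the uniqueness of $a(L_0)$ (which you flag yourself) is unproven, and the crossing argument you sketch does not obviously work: two disjoint stable rays can each accumulate on large overlapping arcs without intersecting, so disjointness of leaves alone does not pin down $a$. Second, and more seriously, the final contradiction does not exist as stated. Equivariance only tells you that $a(L_0)$ is fixed by every deck transformation preserving $L_0$; to contradict this you would need a deck transformation of $L_0$ whose two ideal fixed points both avoid $a(L_0)$, and in the critical case — $\pi(L_0)$ a cylinder with cyclic fundamental group — no such transformation need exist. Indeed, in the paper's Case II the distinguished point $a$ \emph{is} an endpoint of the axis of the generator, so your contradiction is simply false there. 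More globally, a continuous $\pi_1(M)$-equivariant section of ideal points is not by itself incompatible with non-virtually-solvable fundamental group: Proposition \ref{nonuniform} exhibits exactly such a curve $\zeta$ and needs the full construction of a topological Anosov flow, non-uniformity, and the Barbot--Plante results to conclude solvability — and that machinery (via Lemma \ref{transverse_continuity}) already presupposes Proposition \ref{singlelimit}, so appealing to it here would be circular. You need the leafwise argument with the unstable foliation, Novikov's theorem, and the closed-curve construction to close the proof.
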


\begin{proof}
    By Lemma~\ref{embeddedplane}, all leaves of $\widetilde{\mathcal{F}}^{su}$ and $\widetilde{\mathcal{F}}^{cs}$ are properly embedded in $\widetilde{M}$. For any leaf $F \in \widetilde{\mathcal{F}}^{su}$, each ray $l$ of $\widetilde{\Lambda}^s_F$ is likewise properly embedded in $F$. Consequently, $l$ can only converge to the boundary $\partial_{\infty}F$.

    Suppose for contradiction that there exists a leaf $F \in \widetilde{\mathcal{F}}^{su}$ and a ray $l$ in a leaf of $\widetilde{\Lambda}^{s}_{F}$ such that $l$ does not converge to a single point. Then $l$ must limit to a connected interval $I$ in $\partial_{\infty}F$. As established in Lemma~\ref{distinguished}, every leaf $L \in \widetilde{\mathcal{F}}^{su}$ possesses an ideal point $a$ on its boundary $\partial_{\infty}L$ with the following property: for any closed segment $I_0$ in $\partial_{\infty}L \setminus \{a\}$, there exists a ray $l_0$ in a leaf of $\widetilde{\Lambda}^{s}_{L}$ with subsegments converging to $I_0$.

    Since the manifold is not the 3-torus, there exists at least one leaf of $\mathcal{F}^{su}$ with non-trivial fundamental group. Assume $\pi_1(\pi(L))$ is non-trivial.

We now analyze the asymptotic behavior of leaves in $\widetilde{\Lambda}_L^u$. We demonstrate that either:
\begin{enumerate}
    \item Each leaf of $\widetilde{\Lambda}_L^u$ converges to every closed interval in $\partial_{\infty}L \setminus \{a\}$ (analogous to $\widetilde{\Lambda}_L^s$), or
    \item All leaves of $\widetilde{\Lambda}_L^u$ possess only one ideal point $a$.
\end{enumerate}
By examining these cases separately, we derive a contradiction to complete the proof. Let $l^s$ and $l^u$ denote leaves of $\widetilde{\Lambda}_L^s$ and $\widetilde{\Lambda}_L^u$ respectively.

\medskip\noindent\textbf{Case I: Existence of a leaf in $\widetilde{\Lambda}^u_L$ accumulating at an ideal point $b \neq a$.}

Suppose there exists a ray $l^u_+$ in a leaf of $\widetilde{\Lambda}_L^u$ accumulating at an ideal point $b \neq a$ in $\partial_{\infty}L$. Select an interval $I_b \subset \partial_{\infty}L$ containing $b$ but excluding $a$. Since $l^s$ converges to any interval in $\partial_{\infty}L \setminus \{a\}$, there exists a sequence of segments $\{l^s_i\}_{i\in\mathbb{N}}$ in $l^s$ converging to $I_b$. By Lemma~\ref{at-most-once}, no leaf of $\widetilde{\mathcal{F}}^u$ intersects $l^s$ multiple times. Therefore, $l^u_+$ must align parallel to $l^s$ to avoid multiple intersections with the $l^s_i$ sequence. As $l^u_+$ accumulates at $b$, it converges to $I_b$ along with $\{l^s_i\}$ (see Figure~\ref{fig:2cases}). Expanding $I_b$ arbitrarily within $\partial_{\infty}L \setminus \{a\}$, we conclude $l^u_+$ converges to every interval in $\partial_{\infty}L \setminus \{a\}$.

Let $\gamma$ be a geodesic in $L$ corresponding to a non-trivial element of $\pi_1(\pi(L))$. This curve $\gamma$ possesses at least one ideal point $z \in \partial_{\infty}L$ distinct from $a$. For any interval $I_z$ containing $z$ but not $a$, the ray $l^u_+$ converges to $I_z$, resulting in infinitely many intersections with $\gamma$. Crucially, these intersection points project to distinct points in $M$ under the covering map $\pi: \widetilde{M} \rightarrow M$. If not, a non-trivial deck transformation would preserve $l^u_+$, implying a closed unstable leaf in $\mathcal{F}^u$ - an impossibility.

Let $\gamma_0 = \pi(\gamma)$. Observe that the intersection $\pi(l^u_+) \cap \gamma_0$ contains infinitely many distinct points. Since $l^u_+$ converges to every interval in $\partial_{\infty}L \setminus \{a\}$, we may select infinitely many points in $\pi(l^u_+) \cap \gamma_0$ lying in distinct local unstable manifolds. There exist $x_1, x_2 \in \pi(l^u_+) \cap \gamma_0$ within the same local product neighborhood but in different local unstable manifolds.

Within this neighborhood, we smoothly connect the local unstable manifolds of $x_1$ and $x_2$ to construct a closed curve $\alpha$ transverse to $\mathcal{F}^s$. By Novikov's theorem, such a curve in an su-leaf transverse to $\mathcal{F}^s$ must be non-contractible. If $\alpha$ were null-homotopic, it would simultaneously be transverse to $\mathcal{F}^{cs}$, contradicting Theorem~\ref{taut}. 

Lifting $\alpha$ to $\widetilde{\alpha}$ in $\widetilde{M}$, its non-contractibility implies invariance under a non-trivial deck transformation. This transformation may differ from that fixing $\gamma$, as $\pi_1(\pi(L))$ might have multiple generators. The curve $\widetilde{\alpha}$ has two ideal points in $\partial_{\infty}L$, with at least one distinct from $a$. Let $w \in \partial_{\infty}L \setminus \{a\}$ be this point, and consider an interval $I_w \subset \partial_{\infty}L \setminus \{a\}$ containing $w$. Since $l^s$ contains a ray converging to $I_w$, it must intersect $\widetilde{\alpha}$ infinitely often. However, Theorem~\ref{taut} establishes that $\widetilde{\alpha}$ - being a transversal to $\widetilde{\mathcal{F}}^s$ - intersects each leaf at most once, yielding a contradiction.

\begin{figure}[htb]
    \centering
    \subcaptionbox{Leaf $l^u$ with ideal point $b \neq a$}{\includegraphics{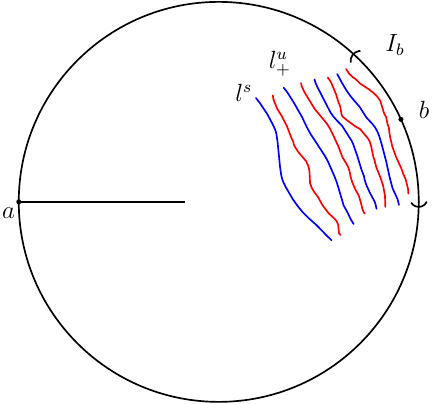}}
    \subcaptionbox{All leaves of $\widetilde{\Lambda}^u_L$ share ideal point $a$}{\includegraphics{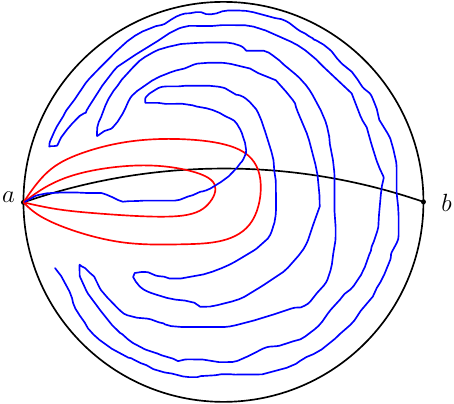}}
    \caption{Configurations of unstable leaves}
    \label{fig:2cases}
\end{figure}

\noindent\textbf{Case II: All rays of $\widetilde{\Lambda}^u_L$ share ideal point $a$.}

From the preceding argument, all leaves of $\widetilde{\Lambda}_L^u$ converge to $a$ from both sides. Deck transformations on $L$ preserve $a$ since they map leaves of $\widetilde{\Lambda}_L^u$ to each other. In the Poincaré disk, geodesics sharing an ideal point must be asymptotic. However, non-homotopic closed curves cannot be asymptotic, implying only one closed geodesic converges to $a$. Therefore, $\pi_1(\pi(L))$ is either trivial or cyclic, making $\pi(L)$ a plane or cylinder. By assumption $\pi(L)\neq\mathbb{R}^2$, it must be a cylinder.

Let $\gamma$ be the axis of $\pi_1(\pi(L))$'s generator, with limit points $a$ and $b\in\partial_{\infty}L$. As all leaves of $\widetilde{\Lambda}_L^u$ limit to $a$, any intersection between $l^s$ and $l^u\in\widetilde{\Lambda}_L^u$ traps one ray of $l^s$ in the region bounded by $l^u$ and $a$, since $l^s$ intersects each unstable leaf at most once.

For $v\in l^s$, let $l^s_-(v)$ denote the ray from $v$ to $a$, and $l^s_+(v)$ the complementary ray. We orient $l^s$ with $a$ as the negative direction. The projected ray $\pi(l^s_+(v))$ intersects $\gamma_0:=\pi(\gamma)$ at infinitely many distinct points. For any $\gamma_0'$ homotopic to $\gamma_0$ in the cylindrical leaf $\pi(L)\subset M$, $\pi(l^s_+(v))$ intersects $\gamma_0'$ infinitely often.

\begin{figure}[htb]
    \centering
    \includegraphics{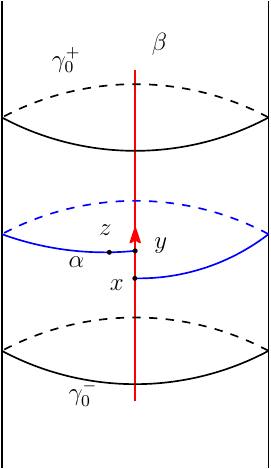}
    \caption{Stable ray $l^s_+(x)$ intersects only one of $\gamma_0^+$ or $\gamma_0^-$ on the cylinder}
    \label{case2}
\end{figure}

The infinite intersection set $l^s_+(v)\cap\gamma_0$ accumulates along $\gamma_0$. For any $\epsilon>0$, there exist $x,z\in l^s_+(v)\cap\gamma_0$ within an $\epsilon$-neighborhood on $\pi(L)$. Following Case I's methodology, connect $x$ and $z$ via a stable curve and a local unstable segment through $x$ to form a non-contractible closed curve $\alpha\subset\pi(L)$. Let $y\in\alpha$ be the intersection of $W^u_{loc}(x)$ and $W^s_{loc}(z)$ (see Figure~\ref{case2}).

The cylindrical topology forces $\alpha$ to be homotopic to $\gamma_0$. Let $\beta$ be the unstable curve through $x$, oriented such that $y$ follows $x$. Consider closed curves $\gamma_0^+,\gamma_0^-\subset\pi(L)$ homotopic to $\alpha$ on opposite sides. The stable ray $l^s_+(v)$ would need to intersect both $\gamma_0^+$ and $\gamma_0^-$ - an impossibility by orientability. This contradiction completes the proof.
\end{proof}

We emphasize that the absence of periodic points remains unused in the preceding proof. Furthermore, neither the completeness of $\mathcal{F}^{cs}$ or $\mathcal{F}^{cu}$, nor dynamical coherence constitutes a necessary condition. Specifically, dynamical coherence is only employed to preclude compact center stable leaves - a configuration that Theorem~\ref{Novikov} explicitly forbids by preventing the existence of transversals intersecting a center stable leaf multiple times.

\subsection{Non-uniform foliation}

A taut foliation $\mathcal{F}$ is called \emph{uniform} if in the lifted foliation $\widetilde{\mathcal{F}}$ on the universal cover $\widetilde{M}$, every pair of leaves remains at bounded Hausdorff distance. Formally, for any two leaves $L, F \in \widetilde{\mathcal{F}}$, there exists $k = k(L,F) < \infty$ such that $L$ is contained in the $k$-neighborhood of $F$ (and vice versa) with respect to Hausdorff distance. For any $\mathbb{R}$-covered codimension-one foliation - whether uniform or non-uniform - with hyperbolic leaves, there exists a vertical foliation in $\mathcal{A}$. The leaf space of this vertical foliation forms a circle, known as the \textit{universal circle} $\mathcal{U}$ in Thurston's framework, encoding all ideal circles of leaves. Through harmonic measure theory in closed manifolds of arbitrary dimension, Thurston demonstrated the prevalence of contracting directions in codimension-one foliations. This result was later topologically generalized by Calegari and Dunfield \cite{CalegariDunfield03}.

\begin{defn}[Contracting Direction]\label{def_contracting}
    Let $F$ be a leaf of $\widetilde{\mathcal{F}}$ containing a point $x$, and $\gamma$ a geodesic ray in $F$ starting at $x$ with initial tangent vector $v$. We say $\gamma$ (or $v$) is a \emph{contracting direction} if:
    \begin{itemize}
        \item There exists a transversal $\tau$ to $\widetilde{\mathcal{F}}$ through $x$;
        \item For every leaf $L \in \widetilde{\mathcal{F}}$ intersecting $\tau$, the distance $d(\gamma(t), L) \to 0$ as $t \to \infty$.
    \end{itemize}
    
    Similarly, $\gamma$ (or $v$) is called an \emph{$\epsilon$-non-expanding direction} if:
    \begin{itemize}
        \item There exists $\epsilon > 0$ and a transversal $\tau_\epsilon$ to $\widetilde{\mathcal{F}}$ through $x$;
        \item For every leaf $L \in \widetilde{\mathcal{F}}$ intersecting $\tau_\epsilon$, the distance $d(\gamma(t), L) \leq \epsilon$ for all $t \geq 0$.
    \end{itemize}
\end{defn}

We will utilize the following theorem in the non-uniform setting:
\begin{thm}\cite{Calegari00,Fenley02} \label{Thurston}
    Let $\mathcal{F}$ be a minimal, non-uniform $\mathbb{R}$-covered foliation with hyperbolic leaves. Given any point $x$ in a leaf $F$ of the lifted foliation $\widetilde{\mathcal{F}}$, there exists a dense set of contracting directions from $x$ to every other leaf $L \in \widetilde{\mathcal{F}}$.
\end{thm}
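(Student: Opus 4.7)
The plan is to combine the non-uniformity hypothesis, which forces contracting behavior somewhere in $\widetilde{M}$, with minimality and the $\mathbb{R}$-covered structure, which spreads contracting directions to every circle at infinity. Fix leaves $F, L \in \widetilde{\mathcal{F}}$ and a point $x \in F$, and write $C(x, L) \subset S^1_x F$ for the set of contracting directions at $x$ toward $L$. I would first produce a single such direction, and then argue that $C(x, L)$ cannot avoid any open arc on the ideal circle.

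To produce at least one contracting direction, non-uniformity supplies leaves $F_0, L_0 \in \widetilde{\mathcal{F}}$ at infinite Hausdorff distance. After possibly swapping the roles, there is a sequence $p_n \in F_0$ with $p_n \to \infty$ in $F_0$ while $d_{\widetilde{M}}(p_n, L_0) \to 0$. Join a basepoint $x_0 \in F_0$ to each $p_n$ by a geodesic ray in $F_0$ and pass to a limit of initial tangent vectors, obtaining a candidate direction $v_0$. The $\mathbb{R}$-covered hypothesis supplies a continuous monotone transversal projection between any two leaves, and combined with the convexity of the distance function $y \mapsto d(y, L_0)$ restricted to $F_0$ in a hyperbolic leaf, this forces the geodesic ray from $x_0$ in direction $v_0$ to remain in a shrinking neighborhood of $L_0$, so $v_0$ is contracting.

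To upgrade one contracting direction into a dense set at every base point and for every target leaf, one passes to Thurston's universal circle $\mathcal{U}$, which exists for minimal $\mathbb{R}$-covered foliations with hyperbolic leaves. The ideal boundaries $\partial_\infty F$ for $F \in \widetilde{\mathcal{F}}$ are identified coherently with $\mathcal{U}$ via the vertical foliation on the cylinder at infinity, and this identification is $\pi_1(M)$-equivariant. The set $\widehat{C} \subset \mathcal{U}$ of ideal endpoints realized as contracting directions is then nonempty by the previous step, and $\pi_1(M)$-invariant. The action of $\pi_1(M)$ on $\mathcal{U}$ is minimal for a minimal $\mathbb{R}$-covered foliation by hyperbolic leaves, so the closure $\overline{\widehat{C}}$ is all of $\mathcal{U}$; pulling back to $\partial_\infty F \simeq \mathcal{U}$ and then to the unit tangent circle $S^1_x F$ via the geodesic compactification gives density of $C(x, L)$.

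The main obstacle is the stability of the contracting property under small angular perturbation, which is what prevents $\widehat{C}$ from being too small to spread under the $\pi_1(M)$-action. A priori, rotating $v_0$ slightly could send the corresponding geodesic into a region where the Hausdorff gap between $F_0$ and $L_0$ is large; controlling this requires that two geodesic rays sharing an ideal endpoint in a hyperbolic leaf are asymptotic, combined with a delicate use of the $\mathbb{R}$-covered transversal structure to compare Hausdorff gaps uniformly along the ideal boundary. This is the technical heart of the arguments of Calegari \cite{Calegari00} and Fenley \cite{Fenley02}, where the universal circle is constructed and the equivariant identification of all circles at infinity is established; the minimality of the $\pi_1(M)$-action on $\mathcal{U}$ is then a consequence of minimality of $\mathcal{F}$ together with this identification.
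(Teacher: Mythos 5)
This theorem is quoted in the paper from \cite{Calegari00,Fenley02} without proof, so your sketch has to be measured against the arguments of those references. Your skeleton (produce one contracting direction, then spread it by equivariance and minimality on the universal circle) has the right flavor, but two of your steps do not work as written.

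First, your production of the initial contracting direction is a non sequitur. Non-uniformity gives leaves $F_0,L_0$ with $\sup_{p\in F_0} d(p,L_0)=\infty$; it does not give a sequence $p_n\to\infty$ in $F_0$ with $d_{\widetilde M}(p_n,L_0)\to 0$, and swapping the roles of the leaves does not convert a statement about suprema into one about infima. (That points of one leaf come arbitrarily close to the other is in the end a \emph{consequence} of the density of contracting directions, not an input.) The actual source of the initial contracting directions is different: minimality together with non-uniformity excludes a holonomy invariant transverse measure (an invariant measure, necessarily of full support by minimality, would force the foliation to be uniform, cf.\ the remark before Proposition \ref{uniform}), and then Thurston's harmonic-measure argument (Theorem \ref{Thurston_rmk}) or its topological version (Theorem \ref{CD03}) yields a dense set of contracting directions at every point --- but only toward leaves crossing a small transversal through the base point. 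The convexity you invoke for $y\mapsto d_{\widetilde M}(y,L_0)$ restricted to $F_0$ is also unjustified: only the leafwise metrics are hyperbolic, the ambient metric on $\widetilde M$ is not non-positively curved, and $L_0$ is not a convex set, so this function need not be convex.

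Second, your globalization step is both circular and incomplete. For a non-uniform $\mathbb{R}$-covered foliation the coherent, $\pi_1(M)$-equivariant identification of the circles $\partial_{\infty}F$ with a single universal circle is constructed in \cite{Calegari00,Fenley02} precisely out of the markers, i.e.\ out of the contracting directions; you cannot presuppose that identification in order to prove that contracting directions are dense. (In the uniform case the identification comes instead from quasi-isometric center holonomies, as in Proposition \ref{uniform}, but that route is unavailable here.) Moreover, the genuine content of the theorem --- and the reason the paper stresses that it is stronger than Thurston's --- is that the contraction holds toward \emph{any} other leaf $L\in\widetilde{\mathcal F}$, not merely toward leaves meeting a transversal through $x$. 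Your minimality argument on $\mathcal{U}$ at best spreads the local contracting property around the circle at infinity; it never addresses why a direction contracting toward nearby leaves also contracts toward leaves arbitrarily far away in the leaf space. That upgrade requires a separate argument (in Fenley's treatment, one shows that the set of leaves toward which a fixed direction contracts is open and closed in the leaf space, using non-uniformity), and it is missing from your sketch.
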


This theorem features stronger hypotheses and conclusions compared to Thurston's original result, specifically guaranteeing density of contracting directions between arbitrary leaf pairs. While Thurston's result holds in arbitrary dimensions, we specialize to 3-manifolds for simplicity.

\begin{thm}[Thurston]\label{Thurston_rmk}
    Let $\mathcal{F}$ be a codimension-one foliation with hyperbolic leaves in a closed 3-manifold $M^3$. For any $\epsilon > 0$ and leaf $F \in \widetilde{\mathcal{F}}$, the $\epsilon$-non-expanding directions are dense at every $x \in F$. If no holonomy-invariant transverse measure supported on $\pi(F)$ exists for any $F \in \widetilde{\mathcal{F}}$, then contracting directions become dense in $F$. Furthermore, when $\pi(F)$ is non-compact, the transversal $\tau$ from Definition~\ref{def_contracting} may be chosen with $x$ in its interior.
\end{thm}

We will use this theorem in the no holonomy-invariant transverse measure case. A proof can be found in \cite{CalegariDunfield03}.

Let $\tau_t: \widetilde{M} \to \widetilde{M}$ be a regulating flow for $\widetilde{\mathcal{F}}$. Through the topology of the cylindrical boundary $\mathcal{A}$, each contracting direction along $\tau_t$ generates a curve in $\mathcal{A}$.

\begin{defn}[Markers]\label{def_markers}
    Given a geodesic ray $\gamma$ associated with a contracting direction in a leaf $F \in \widetilde{\mathcal{F}}$ along $\tau_t$, define for any $L \in \widetilde{\mathcal{F}}$:
    \begin{itemize}
        \item A flow-projected curve $\gamma_L \subset L$ with unique ideal endpoint $a_L \in \partial_{\infty}L$;
        \item The \emph{marker} $m := \bigcup_{L \in \widetilde{\mathcal{F}}} a_L$.
    \end{itemize}
\end{defn}

\begin{rmk}\label{marker_properties}
    While $\gamma_L$ need not be geodesic in $L$, its geodesic curvature vanishes asymptotically near $\partial_{\infty}L$, ensuring unique ideal endpoints. Markers form embedded curves in $\mathcal{A}$ with pairwise disjointness or coincidence \cite{Fenley02,Calegari00}.
\end{rmk}

Markers exhibit continuity \cite{Fenley02}. To elucidate: Given $F \in \widetilde{\mathcal{F}}$ and $\xi \in \partial_{\infty}F$, density of contracting directions yields sequences $\{\alpha_i\}, \{\beta_i\} \subset \partial_{\infty}F$ bounding shrinking intervals $(\alpha_i, \beta_i) \ni \xi$. Associated markers $m_{\alpha_i}, m_{\beta_i}$ connect to leaves $L_i \in \widetilde{\mathcal{F}}$ accumulating at $F$, forming rectangles $R_i \subset \mathcal{A}$ with:
\begin{itemize}
    \item Horizontal edges: $(\alpha_i, \beta_i) \subset \partial_{\infty}F$ and $(\partial_{\infty}L_i \cap m_{\alpha_i}, \partial_{\infty}L_i \cap m_{\beta_i})$;
    \item Vertical edges: Marker segments between $\partial_{\infty}F$ and $\partial_{\infty}L_i$.
\end{itemize}
Continuity manifests as $\lim_{i\to\infty} R_i = \xi$ in $\mathcal{A}$'s topology.

Beyond the continuity of geodesic ideal points, we establish continuity for ideal points of rays in $\widetilde{\Lambda}^{s}_{L_i}$ (or $\widetilde{\Lambda}^{u}_{L_i}$) along sequences $L_i \in \widetilde{\mathcal{F}}^{su}$. This continuity holds transversely to $\widetilde{\mathcal{F}}^{su}$, though intrinsic continuity within individual leaves $F \in \widetilde{\mathcal{F}}^{su}$ fails in general (Lemma~\ref{continuity}). The following result, originally proved in \cite{Fenley09} for foliations transverse to almost pseudo-Anosov flows, remains valid in our context:

\begin{lem}\label{transverse_continuity}
    Ideal points of $\widetilde{\Lambda}^s_F$ vary continuously in $\mathcal{A}$ as $F$ ranges through $\widetilde{\mathcal{F}}^{su}$.
\end{lem}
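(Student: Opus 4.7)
\emph{Plan.} The strategy is to bracket the ideal point of each stable ray by markers coming from contracting directions and then exploit the continuity of these markers on $\mathcal{A}$. Concretely, fix $F_n\to F$ in $\widetilde{\mathcal{F}}^{su}$ and a stable leaf $l\subset F$. By Lemma \ref{<1center} together with Corollary \ref{liftcomplete}, the set $l_n:=\widetilde{\mathcal{F}}^{cs}(l)\cap F_n$ is a single stable leaf. Choose $p\in l$ and let $p_n\in l_n$ be the image of $p$ under the regulating center flow (Proposition \ref{>1point}). Denote by $l^+$ and $l_n^+$ the positive stable rays from $p$ and $p_n$, and by $\xi^+\in\partial_{\infty}F$ and $\xi_n^+\in\partial_{\infty}F_n$ their unique ideal points supplied by Proposition \ref{singlelimit}. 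The goal is $\xi_n^+\to\xi^+$ on $\mathcal{A}$; the argument for the negative ray is symmetric.

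\emph{Producing markers.} By Proposition \ref{transversemeasure}, under the standing hypotheses the foliation $\mathcal{F}^{su}$ admits no holonomy invariant transverse measure, so Theorem \ref{CD03} yields a dense set of contracting directions along the regulating center flow in each leaf of $\widetilde{\mathcal{F}}^{su}$. Given any ideal open interval $I\subset\partial_{\infty}F$ about $\xi^+$, I would pick contracting directions at $p$ with ideal points $\alpha,\beta\in I$ on opposite sides of $\xi^+$. They determine markers $m_\alpha,m_\beta\subset\mathcal{A}$ whose intersections with $\partial_{\infty}F_n$ are well-defined points $\alpha_n,\beta_n$; the continuity of markers gives $\alpha_n\to\alpha$ and $\beta_n\to\beta$ on $\mathcal{A}$.

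\emph{Trapping argument.} Let $\eta_\alpha,\eta_\beta\subset F$ be the geodesic rays associated to the two chosen contracting directions and $\eta_\alpha^{F_n},\eta_\beta^{F_n}\subset F_n$ their center-flow projections to $F_n$; by construction these curves end at $\alpha_n,\beta_n$. Since the center flow is regulating and the leaves of $\widetilde{\mathcal{F}}^{su}$ are properly embedded planes (Lemma \ref{embeddedplane}), it induces an orientation-preserving homeomorphism $\Phi_n\colon F\to F_n$; using Lemma \ref{<1center} one checks that $\Phi_n(l)=l_n$, $\Phi_n(\eta_\alpha)=\eta_\alpha^{F_n}$, and $\Phi_n(\eta_\beta)=\eta_\beta^{F_n}$. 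Because $l^+,\eta_\alpha,\eta_\beta$ have three distinct ideal endpoints and $l^+$ accumulates uniquely on $\xi^+\in(\alpha,\beta)$, there is $R>0$ such that $l^+|_{[R,\infty)}$ is disjoint from $\eta_\alpha\cup\eta_\beta$ and lies in the half-disk of the compactification $F\cup\partial_{\infty}F$ bounded by the arc $\eta_\alpha\cup\{p\}\cup\eta_\beta$ whose ideal side is $(\alpha,\beta)$. Its image $\Phi_n(l^+|_{[R,\infty)})$ is a subray of $l_n^+$ contained in the region of $F_n$ cut off by $\eta_\alpha^{F_n}\cup\{p_n\}\cup\eta_\beta^{F_n}$; for $n$ large, the convergence $\alpha_n\to\alpha,\beta_n\to\beta$ on $\mathcal{A}$ identifies this region as the one with ideal side $(\alpha_n,\beta_n)$, and since $\xi_n^+$ is the unique ideal point of $l_n^+$ this forces $\xi_n^+\in(\alpha_n,\beta_n)$. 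Taking $I$ arbitrarily tight around $\xi^+$ then yields $\xi_n^+\to\xi^+$ on $\mathcal{A}$.

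\emph{Main obstacle.} The delicate point is the topological trapping: one must verify both that the tail of $l^+$ really falls into the half-disk whose ideal side is $(\alpha,\beta)$ and that this side corresponds, via the sliding homeomorphism $\Phi_n$, to the arc $(\alpha_n,\beta_n)$ for large $n$. The former rests on Proposition \ref{singlelimit} together with the fact that $\eta_\alpha,\eta_\beta$ are disjoint geodesics with distinct ideal endpoints; the latter is a continuity statement, rooted in the continuity of markers on $\mathcal{A}$ and the orientation preservation of $\Phi_n$ for $F_n$ near $F$.
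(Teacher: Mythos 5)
Your proof is correct and follows essentially the same strategy as the paper's: bracket the unique ideal point of the stable ray (Proposition \ref{singlelimit}) between two contracting directions whose ideal points are arbitrarily close to it, project the ray and the bracketing geodesics to the nearby leaf along the regulating center flow, and use a separation/trapping argument to conclude the projected ray's ideal point lies in the interval cut out by the corresponding markers, so that continuity of markers gives the result. The only cosmetic difference is that you justify the density of contracting directions via Proposition \ref{transversemeasure} and Theorem \ref{CD03} rather than citing Theorem \ref{Thurston} directly, which is if anything a slightly more careful appeal to the same facts.
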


\begin{proof}
    Let $F \in \widetilde{\mathcal{F}}^{su}$ contain a ray $l \in \widetilde{\Lambda}^s_F$ initiating at $p \in F$. By Proposition~\ref{singlelimit}, $l$ terminates at a unique $u \in \partial_{\infty}F$. Theorem~\ref{Thurston} provides contracting directions $r_1, r_2$ from $p$ with ideal points arbitrarily close to $u$. For any $L \in \widetilde{\mathcal{F}}^{su}$, the center projections $\widetilde{W^c}(r_1)$, $\widetilde{W^c}(r_2)$, and $\widetilde{W^c}(l)$ intersect $L$ as curves $s_1$, $s_2$, and $l'$ respectively (non-empty by Proposition~\ref{>1point}). Here $l' \in \widetilde{\Lambda}^s_L$ terminates at $v \in \partial_{\infty}L$.

    The critical observation is $v$'s containment in the interval $J \subset \partial_{\infty}L$ bounded by $v_1 := \partial_{\infty}s_1$ and $v_2 := \partial_{\infty}s_2$. To verify this, consider:
    \begin{itemize}
        \item A bounded domain $D \subset F$ where $l \setminus D$ lies in a wedge $W$ between $r_1$ and $r_2$;
        \item The closure $V := \overline{W \setminus D}$;
        \item A corresponding region $U \subset L$ bounded by center projections of $\partial V$.
    \end{itemize}

    Let $l_0$ be $l$'s final intersection with $D$, and $l_0' := \widetilde{\mathcal{F}}^c(l_0) \cap L$. Should $l'$ escape $U$ beyond $l_0'$, it would recross $\partial U$ - forcing $l$ to similarly recross $\partial V$ through center holonomy, contradicting $l_0$'s maximality. Thus $l' \subset U$ beyond $l_0'$, proving continuous variation of ideal points in $\mathcal{A}$.
\end{proof}

This result does not hold for general foliations. It depends crucially on the existence of a transverse flow preserving the stable foliation structure.

The following proposition is proved in \cite{Calegari00, Fenley09} for general $\mathbb{R}$-covered foliations in an atoroidal 3-manifold. However, the manifolds we consider in this paper are not necessarily atoroidal.

\begin{prop}\label{nonuniform}
    Let $\mathcal{F}^{su}$ be a non-uniform foliation containing a leaf $L \in \widetilde{\mathcal{F}}^{su}$ where the ideal points of $\widetilde{\Lambda}^s_L$ or $\widetilde{\Lambda}^u_L$ are non-dense in $\partial_{\infty}L$. Then $\mathcal{F}^{su}$ is the stable foliation of a flow topologically conjugate to a suspension Anosov flow, and $M$ is a torus bundle over $\mathbb{S}^1$.
\end{prop}

\begin{proof}
    We address the case of non-dense $\widetilde{\Lambda}^s_L$; the unstable case is analogous. Let $I \subset \partial_{\infty}L$ be an open interval disjoint from the ideal limit set of $\widetilde{\Lambda}^s_L$ rays. Fix $a \in I$ and choose $\{p_i\}_{i\in\mathbb{N}} \subset L$ converging to $a$, with associated wedges $W_{p_i}(I) \subset L$ whose complementary angles vanish as $i\to\infty$. 

    Up to a subsequence, select deck transformations $\{g_i\}$ such that $g_i(p_i) \to p_0$ in a leaf $F \in \widetilde{\mathcal{F}}^{su}$ with non-trival $\pi_1(F)$ (via Lemma~\ref{distinguished}). Let $W_i \subset L$ be the $p_i$-centered wedges bounded by $\partial_{\infty}L \setminus I$. The transformed wedges $g_i(W_i)$ converge to a geodesic ray in $F$ initiating at $p_0$ and terminating at $a_F \in \partial_{\infty}F$.

    Suppose $\exists x \neq a_F \in \partial_{\infty}F$ as a limit of some $r \in \widetilde{\Lambda}^s_F$. Center projection $\widetilde{\mathcal{F}}^c(r)$ intersects $g_i(L)$ in rays $r' \in \widetilde{\Lambda}^s_{g_i(L)}$ with endpoints $x' \in \partial_{\infty}g_i(L)$. By Lemma~\ref{transverse_continuity}, $x'$ approximates $x$ but must lie in $g_i(\partial_{\infty}L \setminus I)$ - a contradiction. Thus $a_F$ is fixed under $\pi_1(F)$.

    Each $F \in \widetilde{\mathcal{F}}^{su}$ acquires a distinguished ideal point $a_F$, forming a $\pi_1(M)$-invariant curve $\zeta \subset \mathcal{A}$ via continuity (Lemma~\ref{transverse_continuity}).

\medskip
\textbf{Claim 1:} The curve $\zeta$ is disjoint from all markers in $\mathcal{A}$.
\begin{proof}
    Proposition~\ref{transversemeasure} establishes that $\mathcal{F}^{su}$ cannot admit a transverse invariant measure unless $M$ contains periodic points or has (virtually) solvable fundamental group. Theorem~\ref{Thurston} guarantees dense contracting directions in every leaf of $\widetilde{\mathcal{F}}^{su}$. Each point on $\zeta$ arises as a limit of $g_i(\partial_{\infty}L \setminus I)$ under deck transformations $(g_i)_{i\in\mathbb{N}}$. The density of contracting directions ensures markers intersect each $g_i(\partial_{\infty}L \setminus I)$, yielding marker curves $m_t$ converging pointwise to $\zeta$ in $\mathcal{A}$.

    Suppose $\zeta$ intersects a marker $m$ at a leaf boundary ideal point. Marker continuity forces $m_t \to m$ locally, making $\zeta$ coincide with $m$ through minimality and $\pi_1(M)$-invariance. For $F \in \widetilde{\mathcal{F}}^{su}$ with nontrivial deck transformation $g$ (axis $\gamma$), consider $h(F)$ for $h \in \pi_1(M)$. The $\pi_1(M)$-invariance fixes $h(a_F)$ under deck transformations on $h(F)$, making it an endpoint of $h(\gamma)$. If $\zeta=m$, then $\gamma$ and $h(\gamma)$ would be asymptotic in $\widetilde{M}$. However, distinct lifts of the closed curve $\pi(\gamma) \subset M$ maintain positive minimal separation - a contradiction.
\end{proof}

Thus, $\zeta$ remains disjoint from all contracting directions.

\medskip
\textbf{Claim 2:} Every direction not aligned with $\zeta$ contracts between arbitrary leaves.
\begin{proof}
    As demonstrated in \cite[Proposition 3.21]{Fenley02} using $\mathcal{F}^{su}$'s non-uniformity: For any geodesic $\gamma \subset E \in \widetilde{\mathcal{F}}^{su}$ with endpoints $a_E \in \zeta$ and $b \notin \zeta$, if $b$-direction isn't contracting, then $a_E$-direction must contract. This fact joint with Claim 1 implies our claim here.
\end{proof}

Consequently, $\mathcal{A}$ decomposes into a 1-dimensional foliation comprising $\zeta$ and all markers.

\medskip
\textbf{Claim 3:} There exists a topological Anosov flow with $\mathcal{F}^{su}$ as stable foliation.
\begin{proof}
    We begin by constructing a topological Anosov flow. For each leaf \( F \in \widetilde{\mathcal{F}}^{su} \), observe that all geodesics converging to the distinguished ideal point \( a_F \) constitute a one-dimensional foliation within \( F \). Define \( \widetilde{\phi} \) as the unit-speed flow along these \( a_F \)-directed geodesics. This construction ensures smooth flow lines within individual leaves while maintaining continuous variation both within leaves and across \( \widetilde{\mathcal{F}}^{su} \). The continuity within leaves follows immediately from geodesic continuity. For inter-leaf continuity, consider convergent sequences \( p_i \in F_i \to p \in F \): geodesics from \( p_i\) to \(a_{F_i} \) accumulate to the one from \( p\) to \(a_F \) through \( \mathcal{A} \)-topology continuity of ideal points. Thus \( \widetilde{\phi} \) constitutes a well-defined flow on \( \widetilde{M} \).

    The flow respects deck transformations: For any \( g \in \pi_1(M) \) and \( F \in \widetilde{\mathcal{F}}^{su} \), \( g \)-images of \( a_F \)-directed geodesics in \( F \) become \( a_{g(F)} \)-directed geodesics in \( g(F) \), preserved by \( \zeta \)'s \( \pi_1(M) \)-invariance. Since \( g \) acts isometrically, \( \widetilde{\phi}_t(g(x)) = g(\widetilde{\phi}_t(x)) \) holds for all \( x \in F \), \( t \in \mathbb{R} \), making \( \widetilde{\phi} \) equivariant. This induces a flow \( \phi \) on \( M \) tangent to \( \mathcal{F}^{su} \), with smooth transversely continuous flow lines.

    The stable foliation emerges naturally: Each \( F \in \widetilde{\mathcal{F}}^{su} \) serves as a stable leaf for \( \widetilde{\phi} \), as all flow lines accumulate forward at \( a_F \). Consequently, \( \widetilde{\mathcal{F}}^{su} \) becomes the stable foliation of \( \widetilde{\phi} \), making \( \mathcal{F}^{su} \) the stable foliation of \( \phi \).

    For the unstable foliation: Given \( p \in F \in \widetilde{\mathcal{F}}^{su} \), consider the geodesic from \( p\) to \(a_F \) with backward ideal point \( m_F \). The \( m_F \)-direction contracts by Claim 2. Using center leaves \( \widetilde{\mathcal{F}}^c \) as transversals, Corollary \ref{=1point} ensures this contracting direction defines a marker intersecting each \( \partial_{\infty}E \) uniquely at \( m_E \). Flow lines from \( m_E \) to \( a_E \) under \( \widetilde{\phi} \) backward-asymptote to the orbit of \( p \) in \( F \). The union \( V_p\) of geodesics connecting $m_E$ and $a_E$ for all $E\in \widetilde{\mathcal{F}}^{su}$ forms a 2-surface in \( \widetilde{M} \). These pairwise disjoint or identical surfaces constitute \( \widetilde{\phi} \)'s unstable foliation, projecting to \( \phi \)'s unstable foliation on \( M \). Thus \( \phi \) is a topological Anosov flow.
\end{proof}

Observe that every stable leaf of $\widetilde{\phi}$ intersects each unstable leaf in $\widetilde{M}$ along a geodesic terminating at a distinguished ideal point. Under this geometric condition, Fenley \cite[Proposition 3.7]{Fenley09} establishes that for any deck transformation $g$ preserving $F \in \widetilde{\mathcal{F}}^{su}$, $F$ must be the unique $g$-invariant leaf. Let $\alpha \subset F$ be the $g$-invariant geodesic. Any other flow line in the $\widetilde{\phi}$-unstable leaf through $\alpha$ becomes backward-asymptotic to $\alpha$ and thus non-$g$-invariant. 

Should a distinct $g$-invariant leaf $E \in \widetilde{\mathcal{F}}^{su}$ exist with $g$-axis $\beta \subset E$, the unstable leaf through $\beta$ would intersect $F$, creating another $g$-invariant geodesic asymptotic to $\alpha$ - a contradiction. This structure forces $\pi_1(M)$ to be solvable per \cite{Barbot95etds}. Following Plante's argument \cite{Plante83solvable}, $\phi$ becomes topologically conjugate to a suspension Anosov flow, making $M$ a torus bundle over $\mathbb{S}^1$. This completes the proposition's proof.
\end{proof}

We formulate a general principle from the preceding arguments, independent of periodic point considerations.

\begin{cor}\label{onedense_alldense}
    Let $\mathcal{F}$ be a codimension-one minimal foliation with hyperbolic leaves in a closed 3-manifold, where each leaf $L \in \widetilde{\mathcal{F}}$ on the universal cover $\widetilde{M}$ is assigned a subfoliation $\widetilde{\Lambda}_L$. Suppose every ray in any $\widetilde{\Lambda}_L$-leaf converges to a unique ideal point in $\partial_{\infty}L$. If the limit set of $\widetilde{\Lambda}_F$ becomes dense in $\partial_{\infty}F$ for some leaf $F \in \widetilde{\mathcal{F}}$, then this density property propagates to all leaves: the limit set of $\widetilde{\Lambda}_L$ remains dense throughout $\partial_{\infty}L$ for every $L \in \widetilde{\mathcal{F}}$.
\end{cor}

\subsection{Uniform foliation}

We focus on the uniform case, demonstrating that for any leaf $L \in \widetilde{\mathcal{F}}^{su}$, the ideal limit set of $\widetilde{\Lambda}^s_L$ becomes dense, as shown in Proposition~\ref{uniform}.

Consider first a fundamental property of $\mathbb{R}$-covered foliations:

\begin{lem}\cite{Fenley92}\label{QI}
    Let $\mathcal{F}$ be an $\mathbb{R}$-covered foliation. For any $a_0 > 0$, there exists $a_1 = \phi(a_0) > 0$ through a function $\phi: \mathbb{R} \to \mathbb{R}$, such that whenever $x, y \in \widetilde{M}$ lie in the same leaf $L \in \widetilde{\mathcal{F}}$ with $d(x, y) \leq a_0$, their leafwise distance satisfies $d_L(x, y) \leq a_1$.
\end{lem}

Crucially, $a_1$ depends solely on $a_0$ rather than specific points or leaves. This characteristic equivalently defines $\mathbb{R}$-covered foliations lacking compact leaves.

\begin{lem}\label{quasi-isometry}
    Assume $\mathcal{F}^{su}$ is a uniform foliation. Then for any two leaves $E, F \in \widetilde{\mathcal{F}}^{su}$ in the universal cover $\widetilde{M}$, the lengths of center curves connecting $E$ and $F$ are uniformly bounded. Furthermore, the center holonomy mapping between $E$ and $F$ constitutes a quasi-isometry.
\end{lem}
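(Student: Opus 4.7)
The plan is to prove the lemma in two steps, one for each assertion. First, bound the lengths of center curves between $E$ and $F$ uniformly using the uniform hypothesis together with compactness of $M$ and continuity of the center flow. Second, upgrade this bounded-time control to quasi-isometry of the center holonomy via Lemma~\ref{QI}.

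For Step~1, recall that by Corollary~\ref{=1point} the center flow $\phi_t$ on $\widetilde{M}$ is regulating for $\widetilde{\mathcal{F}}^{su}$, so for every $x\in E$ there is a unique $T(x)\in\mathbb{R}$ with $\phi_{T(x)}(x)\in F$; the goal is to show $\sup_{x\in E}|T(x)|<\infty$. I would argue by contradiction: suppose there is a sequence $x_n\in E$ with $|T(x_n)|\to\infty$. By the uniform hypothesis, pick $y_n\in F$ with $d_{\widetilde{M}}(x_n,y_n)\le K:=K(E,F)$. By cocompactness of the deck action, choose $g_n\in\pi_1(M)$ so that after a subsequence $g_n(x_n)\to x_\infty$; since $d_{\widetilde{M}}(g_n(x_n),g_n(y_n))\le K$, a further subsequence gives $g_n(y_n)\to y_\infty$. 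Because leaves of $\widetilde{\mathcal{F}}^{su}$ are properly embedded planes (Lemma~\ref{embeddedplane}), $g_n(E)\to E_\infty\ni x_\infty$ and $g_n(F)\to F_\infty\ni y_\infty$ in the leaf space. Since $g_n$ commutes with $\phi_t$, the center time from $g_n(x_n)$ to $g_n(F)$ equals $T(x_n)$, and joint continuity of the center-time function $(p,L)\mapsto T(p,L)$ forces $T(x_n)\to T(x_\infty,F_\infty)<\infty$, a contradiction.

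For Step~2, let $h=h_{E,F}:E\to F$ be the center holonomy, a bijection with inverse $h_{F,E}$. Step~1 gives $d_{\widetilde{M}}(x,h(x))\le T_0:=\sup_{x\in E}|T(x)|$ for every $x\in E$. Using uniform continuity of $\phi_t$ over the bounded time interval $[-T_0,T_0]$ together with continuity of $T$, there exist $\delta>0$ and $M>0$ such that $d_E(x,y)<\delta$ implies $d_{\widetilde{M}}(h(x),h(y))$ is small enough for Lemma~\ref{QI} to give $d_F(h(x),h(y))\le M$. Splitting any path in $E$ from $x$ to $y$ of length $d_E(x,y)$ into $\lceil d_E(x,y)/\delta\rceil$ pieces of length at most $\delta$ and summing the local bounds yields the linear estimate $d_F(h(x),h(y))\le (M/\delta)\,d_E(x,y)+M$. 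Applying the same argument to $h^{-1}$ produces the matching lower bound, and since $h$ is a bijection we conclude that $h$ is a quasi-isometry.

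The main obstacle is Step~1: establishing joint continuity of $(p,L)\mapsto T(p,L)$ at the limit pair $(x_\infty,F_\infty)$ when we only have $C^0$ transversality between $\widetilde{\mathcal{F}}^{su}$ and the center foliation. One must check the degenerate possibility $E_\infty=F_\infty$, which is harmless because then $T(x_\infty,F_\infty)=0$ still contradicts $|T(x_n)|\to\infty$; away from this case, transversality of $\phi_t$ and properness of leaves ensure that the first-hitting time varies continuously in $(p,L)$. A secondary subtlety is that the local scale $\delta$ in Step~2 must be chosen uniformly over $E$, which I would obtain by lifting uniform continuity statements from the compact base manifold $M$.
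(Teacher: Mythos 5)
Your proof is correct, and Step~2 is essentially the paper's argument verbatim: subdivide a path in $E$ into segments of uniformly bounded length, bound each image segment using the bounded center length together with Lemma~\ref{QI}, sum, and run the same estimate for the inverse holonomy. Step~1, however, takes a genuinely different route. The paper argues directly: it normalizes an arbitrary $p\in E$ into a fixed compact fundamental domain $K$ by a deck transformation $g$, uses the monotone projection of $\widetilde M$ onto a center leaf to trap $g(F)$ between $g(E)$ and one of two \emph{fixed} leaves $\widetilde{\mathcal F}^{su}(I^{\pm})$ determined once and for all by $K$ and $a=d_H(E,F)$, and then bounds the center length by the maximum of a continuous function on the compact set $K$ --- so it only ever needs continuity of the center hitting time toward a fixed target leaf. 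Your argument is a sequential-compactness proof by contradiction, which additionally requires joint continuity of $(p,L)\mapsto T(p,L)$ as the target leaf varies; you correctly identify this as the delicate point, and it does hold here because the leaf space of $\widetilde{\mathcal F}^{su}$ is Hausdorff and each center leaf maps onto it by an order-preserving homeomorphism (Corollary~\ref{=1point}): for any $\epsilon>0$ the leaves through $\phi_{T_\infty\pm\epsilon}(x_\infty)$, where $T_\infty:=T(x_\infty,F_\infty)$, separate $F_\infty$ in the leaf space, so for large $n$ the leaf $g_n(F)$ lies between the leaves through $\phi_{T_\infty\pm\epsilon}(g_n(x_n))$ and hence $T(x_n)\in[T_\infty-\epsilon,T_\infty+\epsilon]$, contradicting $|T(x_n)|\to\infty$. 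Both proofs rest on the same two ingredients (cocompactness of the deck action and the regulating property of the center flow); the paper's version avoids extracted subsequences and the continuity-in-$L$ discussion entirely, while yours makes transparent exactly where uniformity enters --- it supplies the companion points $y_n\in F$ that prevent $g_n(F)$ from escaping in the leaf space.
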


\begin{proof}
    For any pair of leaves $E$ and $F$ in the foliation $\widetilde{\mathcal{F}}^{su}$, their Hausdorff distance is uniformly bounded. Let $a := d_H(E, F) > 0$ denote this Hausdorff distance. By compactness of the manifold $M$, there exists a connected compact subset $K \subset \widetilde{M}$ on the universal cover satisfying $\pi(K) = M$, where $\pi: \widetilde{M} \to M$ represents the covering projection.

    By Corollary~\ref{=1point}, every center leaf in $\widetilde{\mathcal{F}}^c$ intersects each leaf of $\widetilde{\mathcal{F}}^{su}$ exactly once, making each center leaf homeomorphic to both the leaf space of $\widetilde{\mathcal{F}}^{su}$ and $\mathbb{R}$.

For a center leaf $\alpha \in \widetilde{\mathcal{F}}^c$, a total order $<$ can be defined on $\alpha$ through the transverse orientation of $\widetilde{\mathcal{F}}^{su}$, which induces an ordering on the leaf space of $\widetilde{\mathcal{F}}^{su}$.

Projecting along $\widetilde{\mathcal{F}}^{su}$-leaves onto $\alpha$ identifies each leaf with a unique point on $\alpha$. Under this projection, the compact set $K$ maps to a compact interval in $\alpha$. This projection preserves monotonicity: if $A < B$ on $\alpha$, then $B$'s corresponding leaf succeeds $A$'s in the order. In the Hausdorff topology, let $B_H(K,a)$ denote the closed ball containing all points within distance $\leq a$. The projection of $B_H(K,a)$ forms a compact interval $I \subset \alpha$, with $I^+$ and $I^-$ representing its maximal and minimal endpoints under the defined order.

For any point on $\alpha$ exceeding $I^+$, the corresponding leaf lies outside $B_H(K, a)$, with its Hausdorff distance from $K$'s leaves strictly exceeding $a$. Conversely, if a leaf intersects $B_H(K, a)$, it projects within $I$. Analogously, every point below $I^-$ corresponds to leaves strictly separated from $K$ by distance $a$. Thus, all leaves within Hausdorff distance $a$ from $K$ project into the interval $I$.

For any arbitrary point $p \in E$, there exists an element $g \in \pi_1(M)$ such that $g(p) \in K$. Since $E$ and $F$ have Hausdorff distance $a$ and $g$ acts isometrically, $g(F)$ must intersect $B_H(K, a)$ and project to a point within $I$. Consequently, $g(F)$ necessarily resides between $g(E)$ and either $\widetilde{\mathcal{F}}^{su}(I^+)$ or $\widetilde{\mathcal{F}}^{su}(I^-)$. Without loss of generality, assume $g(E) < g(F) < \widetilde{\mathcal{F}}^{su}(I^+)$.  

By Corollary~\ref{=1point}, the center leaf through $g(p)$ intersects both $g(F)$ and $\widetilde{\mathcal{F}}^{su}(I^+)$ at unique points, denoted respectively as $h(g(p))$ and $l(g(p))$. The center segment from $g(p)$ to $h(g(p))$ is shorter than that from $g(p)$ to $l(g(p))$. The mapping from $K$ to $\widetilde{\mathcal{F}}^{su}(I^+)$ via center curves defines a continuous length function, which attains a finite maximum $C$ due to $K$'s compactness.  

Thus, the center curve length from $g(p)$ to $g(F)$—and equivalently from $p$ to $F$—is bounded by $C$. As $p$ is arbitrary and $g$ exists for every $p \in E$, the uniform bound $C$ (independent of $p$ and $g$) ensures all center curves between $E$ and $F$ have uniformly bounded lengths.

Let $h$ denote the center holonomy from $E$ to $F$ - a map $h: E \to F$ defined by $h(x) = \widetilde{\mathcal{F}}^c(x) \cap F$, which is well-defined and bijective by Corollary~\ref{=1point}. For any $x, y \in E$, construct a sequence $\{x_i\}_{i=0}^{k+1}$ along the geodesic segment in $E$ between $x$ and $y$, where $x_0 = x$, $x_{k+1} = y$, $d_E(x_i, x_{i+1}) = 1$ for $0 \leq i \leq k-1$, and $d_E(x_k, y) \leq 1$. Here $k \in \mathbb{N}$ satisfies $d_E(x, y) \in [k, k+1)$.

The bound $d(h(x_i), h(x_{i+1})) \leq 2C + 1$ holds for $0 \leq i \leq k$ since center curves between $E$ and $F$ have lengths bounded by $C$. By Lemma~\ref{QI}, $d_F(h(x_i), h(x_{i+1})) \leq \phi(2C+1) =: a_1$ for each $i$. Summing these estimates gives:
$$
d_F(h(x), h(y)) \leq a_1(k + 1) \leq a_1(d_E(x, y) + 1) = a_1d_E(x, y) + a_1.
$$

Define the inverse holonomy $h': F \to E$ by $h'(p) = \widetilde{\mathcal{F}}^c(p) \cap E$. The bijectivity of $h$ and $h'$ implies $h' \circ h(x) = x$ for all $x \in E$. Applying parallel reasoning to $h'$ yields:
$$
d_E(x, y) \leq a_1d_F(h(x), h(y)) + a_1.
$$

Combining these inequalities produces the quasi-isometric bounds:
$$
\frac{1}{a_1}d_E(x, y) - 1 \leq d_F(h(x), h(y)) \leq a_1d_E(x, y) + a_1.
$$
Therefore, the center holonomy $h$ constitutes a quasi-isometry.
\end{proof}

\begin{rmk}
    If the foliation $\mathcal{F}^{su}$ possesses a holonomy-invariant transverse measure, it must be uniform. The subsequent proposition demonstrates that when a leaf $F \in \widetilde{\mathcal{F}}^{su}$ exists where the ideal limit set of $\widetilde{\Lambda}^s_F$ fails to be dense in $\partial_{\infty}F$, this configuration excludes uniformity for $\mathcal{F}^{su}$. Consequently, $\mathcal{F}^{su}$ must be non-uniform and lacks any holonomy-invariant transverse measure.
\end{rmk}

\begin{prop}\label{uniform}
    If there exists a leaf \( L \in \widetilde{\mathcal{F}}^{su} \) where the ideal points of \( \widetilde{\Lambda}^s_L \) or \( \widetilde{\Lambda}^u_L \) fail to be dense in \( \partial_{\infty}L \), then the foliation \( \mathcal{F}^{su} \) is non-uniform.	
\end{prop}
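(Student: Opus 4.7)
The plan is to argue by contradiction: suppose $\mathcal{F}^{su}$ is uniform and the ideal limit set of $\widetilde{\Lambda}^s_L$ is non-dense in $\partial_\infty L$ for some $L$, the unstable case being symmetric. By Lemma~\ref{onedense_alldense}, the same non-density holds in every leaf $F \in \widetilde{\mathcal{F}}^{su}$, giving a nontrivial open ideal interval $I_F \subset \partial_\infty F$ disjoint from the stable ideal set of $F$. I would then replay the opening steps of the proof of Proposition~\ref{nonuniform}, which never use non-uniformness: the collapsing-wedge construction of Lemma~\ref{distinguished}, combined with the transverse continuity of stable ideal points (Lemma~\ref{transverse_continuity}), produces a distinguished ideal point $a_F \in \partial_\infty F$ on every leaf such that every stable ray in $\widetilde{\Lambda}^s_F$ converges to $a_F$. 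These points assemble into a continuous $\pi_1(M)$-invariant curve $\zeta \subset \mathcal{A}$, and every deck transformation preserving a leaf $F$ must fix $a_F$.

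Uniformness then enters through Lemma~\ref{quasi-isometry}: center holonomy $h_{E,F}$ between any two lifted su-leaves is a quasi-isometry of Gromov hyperbolic spaces, so it extends to a homeomorphism $\partial h_{E,F} : \partial_\infty E \to \partial_\infty F$ on ideal boundaries that maps stable leaves to stable leaves, and therefore sends $a_E$ to $a_F$. This identifies $\zeta$ with a single $\pi_1(M)$-fixed point on the universal circle of $\mathcal{F}^{su}$, while $\widetilde{\Lambda}^s_E$ and $\widetilde{\Lambda}^s_F$ are identified leaf-by-leaf with uniformly controlled distortion along the leaf space.

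The final step is to convert this rigid structure into a concrete contradiction. My intended route is to use the uniformness-controlled quasi-isometric identification together with the distinguished direction $\zeta$ to construct a holonomy-invariant transverse measure for $\mathcal{F}^{su}$: a normalised counting measure along a center transversal can be pushed by the bounded-length control of center arcs furnished by Lemma~\ref{quasi-isometry}, with su-holonomy invariance coming from the rigidity of the $\pi_1(M)$-fixed section $\zeta$. Once such a measure is produced, Proposition~\ref{transversemeasure} forces either that $f$ is conjugate to an Anosov diffeomorphism (contradicting the absence of periodic points) or that $\pi_1(M)$ is virtually solvable (contradicting our standing hypothesis). The main obstacle is exactly the measure-construction step: the distinguished direction field $\zeta$ must be packaged into a non-trivial holonomy-invariant measure, and making this precise requires careful bookkeeping of the quasi-isometry constants from Lemma~\ref{quasi-isometry}. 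An alternative route would be to repeat the flow construction of Claim~3 in Proposition~\ref{nonuniform} to obtain a topological Anosov flow tangent to $\mathcal{F}^{su}$ and invoke Barbot and Plante's theorems to force $M$ to be a torus bundle over the circle; however, the unstable foliation of that flow was built from markers of contracting directions using Claim~2, where non-uniformness was used essentially, so in the uniform case a substitute construction drawing on the quasi-isometric identifications of Lemma~\ref{quasi-isometry} would be needed.
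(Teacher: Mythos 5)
Your opening moves match the paper's: assume uniformness, use Lemma~\ref{quasi-isometry} to make the center holonomies $h(E,F)$ quasi-isometries, and extend them to homeomorphisms of ideal boundaries that carry stable leaves to stable leaves. But your concluding step is a genuine gap, and you flag it yourself: you never actually produce the holonomy-invariant transverse measure. The sketch you give --- ``a normalised counting measure along a center transversal can be pushed by the bounded-length control of center arcs, with su-holonomy invariance coming from the rigidity of the $\pi_1(M)$-fixed section $\zeta$'' --- does not yield invariance. A counting or length measure on a transversal is not preserved by holonomy in general; producing an invariant transverse measure requires an averaging argument (Plante-type, using subexponential growth of holonomy orbits, or a harmonic-measure construction), and nothing in the distinguished-direction structure or in the quasi-isometry constants of Lemma~\ref{quasi-isometry} supplies such control. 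Since this is the step that is supposed to deliver the contradiction via Proposition~\ref{transversemeasure}, the proof as proposed is incomplete. Your alternative route (redoing Claim~3 of Proposition~\ref{nonuniform}) is, as you note, also blocked, because the unstable foliation of that flow was built from Claim~2, which uses non-uniformness essentially.

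The paper closes the argument quite differently, and the tool it uses is available inside your own setup. From the boundary identifications one builds the universal circle $\mathcal{U}$ as the quotient of the cylinder at infinity by the vertical foliation $\{\sigma_x\}$, and then invokes the minimality of the $\pi_1(M)$-action on $\mathcal{U}$ (Theorem~1.2 of \cite{FP20minimal}). Each $g\in\pi_1(M)$ acts on $\partial_{\infty}L$ as $\rho_L(g)=h(g(L),L)\circ g$, and since center-stable leaves are preserved, $\rho_L(g)$ permutes the leaves of $\widetilde{\Lambda}^s_L$; hence the closed set of their ideal limit points is $\rho_L$-invariant, and minimality forces it to be all of $\partial_{\infty}L$, contradicting non-density. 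Note that this bypasses the distinguished-point construction entirely: one does not need every stable ray to share a single ideal point $a_F$, only that the ideal limit set is a proper closed invariant subset. (Alternatively, within your framework, the $\pi_1(M)$-fixed section $\zeta$ you construct descends to a global fixed point of the $\pi_1(M)$-action on $\mathcal{U}$, which already contradicts the same minimality theorem --- but either way the missing ingredient is the Fenley--Potrie minimality result, not a transverse measure.)
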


\begin{proof}

    As established by Proposition~\ref{>1point} and Corollary~\ref{=1point}, each leaf in $\widetilde{\mathcal{F}}^c$ intersects every leaf of $\widetilde{\mathcal{F}}^{su}$ transversely at precisely one point. Assigning an orientation to $\widetilde{\mathcal{F}}^c$, we define a center flow $\psi: \mathbb{R} \times \widetilde{M} \to \widetilde{M}$ where for any $x \in \widetilde{M}$, $\psi_t(x)$ denotes the unique point on $\widetilde{\mathcal{F}}^c(x)$ at distance $|t|$ from $x$ along the leaf.

    For any pair of leaves $E, F \in \widetilde{\mathcal{F}}^{su}$, define the center holonomy $h(E,F): E \to F$ by $h(x) = \widetilde{\mathcal{F}}^c(x) \cap F$. Lemma~\ref{quasi-isometry} establishes $h$ as a quasi-isometry. This map extends to a boundary homeomorphism $h: \partial_{\infty}E \to \partial_{\infty}F$ (as shown in \cite{Gromov87}, \cite{Thurston88}), which we retain as $h(E,F)$ for simplicity.

    We now construct a vertical foliation on $\mathcal{A}$ to facilitate building a universal circle. 

For a fixed leaf $F \in \widetilde{\mathcal{F}}^{su}$ and ideal point $x \in \partial_{\infty}F$, define 
$$
\sigma_x := \bigcup_{L \in \widetilde{\mathcal{F}}^{su}} h(F,L)(x)
$$ 
as a curve in the cylinder at infinity $\mathcal{A}$. Observe that for any $L \in \widetilde{\mathcal{F}}^{su}$, the image $h(F,L)(x)$ constitutes a single point in $\partial_{\infty}L$, implying $\sigma_x$ intersects every circle-at-infinity uniquely. 

Since $h(F,L): \partial_{\infty}F \to \partial_{\infty}L$ is a homeomorphism for any $F,L \in \widetilde{\mathcal{F}}^{su}$, distinct curves $\sigma_{x_1}$ and $\sigma_{x_2}$ ($x_1 \neq x_2$ in $\partial_{\infty}F$) remain disjoint. Furthermore, every $y \in \partial_{\infty}L$ corresponds to a unique $x \in \partial_{\infty}F$ satisfying $h(F,L)(x) = y$. This establishes the disjoint union:
$$
\bigcup_{x \in \partial_{\infty}F} \sigma_x = \mathcal{A}.$$

We now demonstrate the continuity of each curve $\sigma_x$ on $\mathcal{A}$, which establishes $\bigcup_{x \in \partial_{\infty}F} \sigma_x$ as a vertical foliation of $\mathcal{A}$.  

Let $\alpha$ be a center leaf of $\widetilde{\mathcal{F}}^c$, and consider a sequence $p_i \in L_i \in \widetilde{\mathcal{F}}^{su}$ converging along $\alpha$ to $p_0 \in L_0 \in \widetilde{\mathcal{F}}^{su}$. To verify continuity, we show $y_i = h(L, L_i)(y)$ converges to $y_0 = h(L, L_0)(y)$ in $\mathcal{A}$ for every $y \in \partial_{\infty}L$.  

Assume for contradiction that some subsequence $y_i \to z_0 \in \partial_{\infty}L_0$ with $z_0 \neq y_0$. By Lemma~\ref{quasi-isometry}, the maps $h(L_i, L_0)$ are uniform quasi-isometries because all $L_i$ and $L_0$ lie between fixed leaves $E, F \in \widetilde{\mathcal{F}}^{su}$. Let $l_i$ denote the geodesic ray from $p_i$ to $y_i$ in $L_i$. Under $h(L_i, L_0)$, each $l_i$ maps to a quasi-geodesic in $L_0$ connecting $p_0$ to $y_0$, remaining within bounded Hausdorff distance of $l_0$ (see Figure~\ref{fig:uniformcase}).  

The quasi-isometric property of $h(L_i, L_0)$ ensures uniform boundedness of $d_H(l_i, l_0)$ via the triangle inequality. Consequently, the limit geodesic of $l_i$ on $L_0$ must asymptotically align with $l_0$ - a contradiction. Thus, $\sigma_x$ is continuous.  

\begin{figure}[htb]	
		\centering
		\includegraphics{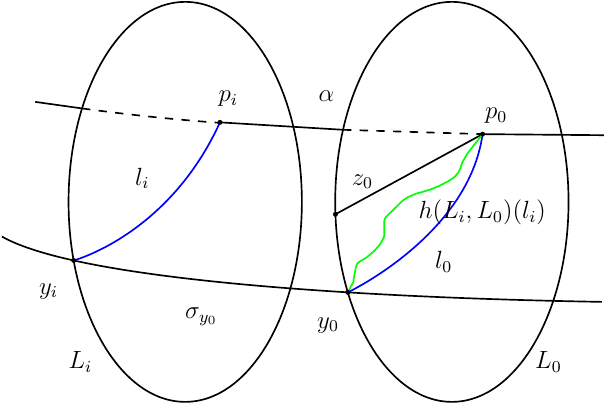}\\
		\caption{A discontinous curve}
		\label{fig:uniformcase}
	\end{figure}
    
Therefore, the union $\bigcup_{x \in \partial_{\infty}F} \sigma_x$ forms a $\pi_1(M)$-invariant vertical foliation of $\mathcal{A}$, transverse to the foliation $\bigcup_{F \in \widetilde{\mathcal{F}}^{su}} \partial_{\infty}F$ on $\mathcal{A}$. Collapsing the leaves of this vertical foliation defines a universal circle $\mathcal{U} = \mathcal{A}/{\sim}$, where $x \in \partial_{\infty}F$ is equivalent to $y \in \partial_{\infty}L$ if $y = h(F,L)(x)$. This circle $\mathcal{U}$ inherits $\pi_1(M)$-invariance, as the group action preserves inter-leaf distances and quasi-geodesic distances. Furthermore, $\mathcal{U}$ can be canonically identified with any circle-at-infinity $\partial_{\infty}L$ for $L \in \widetilde{\mathcal{F}}^{su}$.

    As demonstrated in Theorem 1.2 of \cite{FP20minimal}, the fundamental group $\pi_1(M)$ acts minimally on the universal circle $\mathcal{U}$, meaning every orbit lies dense in $\mathcal{U}$. For each element $g \in \pi_1(M)$, its action on $\mathcal{U}$ corresponds precisely to the action $\rho_L(g) := h(g(L), L) \circ g$ on $\partial_{\infty}L$ for all $L \in \widetilde{\mathcal{F}}^{su}$. Consequently, $\rho_L$ induces a minimal action on $\partial_{\infty}L$ for every leaf $L \in \widetilde{\mathcal{F}}^{su}$.

    Take a leaf \( L \in \widetilde{\mathcal{F}}^{su} \) where the limit points of \( \widetilde{\Lambda}^s_L \) fail to be dense in \( \partial_{\infty}L \). For every leaf \( l_s \in \widetilde{\Lambda}^s_L \) and each \( g \in \pi_1(M) \), the image \( g(l_s) \) becomes a leaf of \( \widetilde{\Lambda}^s_{g(L)} \). The map \( h(g(L), L) \) functions as a projection along the center foliation onto \( L \). Due to the completeness of \( \widetilde{\mathcal{F}}^{cs} \) and Corollary~\ref{=1point}, each leaf of \( \widetilde{\mathcal{F}}^{cs} \) intersects every leaf of \( \widetilde{\mathcal{F}}^{su} \) in precisely one stable leaf. Consequently, every center-stable leaf of \( \widetilde{\mathcal{F}}^{cs} \) remains invariant under \( h(g(L), L) \). Specifically,
\[
h(g(L), L) \circ g(l_s) = \widetilde{\mathcal{F}}^c(g(l_s)) \cap L
\]
also constitutes a leaf of \( \widetilde{\Lambda}^s_L \). Thus, \( \rho_L(g) \) maps leaves of \( \widetilde{\Lambda}^s_L \) to leaves of \( \widetilde{\Lambda}^s_L \), implying the set of limit points of \( \widetilde{\Lambda}^s_L \) is invariant under \( \rho_L(g) \) for all \( g \in \pi_1(M) \). The minimality of the action leads directly to a contradiction.

This contradiction demonstrates that \( \widetilde{\mathcal{F}}^{su} \) cannot be uniform except when the ideal limit points of \( \widetilde{\Lambda}^s_L \) are dense in \( \partial_{\infty}L \) for every \( L \in \widetilde{\mathcal{F}}^{su} \).
\end{proof}


\section{Dense Limit Set}\label{denselimit}

Throughout this section, we work under the following hypothesis: The limit set of $\widetilde{\Lambda}^s_F$ is dense in $\partial_{\infty}F$ for all $F \in \widetilde{\mathcal{F}}^{su}$.  

In the previous section, we established the continuity of ideal points for stable rays across distinct leaves of $\widetilde{\mathcal{F}}^{su}$. However, this continuity fails to extend to ideal points of $\widetilde{\Lambda}^s_F$ within individual leaves $F \in \widetilde{\mathcal{F}}^{su}$. The following lemma asserts that continuous variation of ideal points for stable rays in a leaf $F \in \widetilde{\mathcal{F}}^{su}$ holds provided the leaf space of $\widetilde{\Lambda}^s_F$ satisfies the Hausdorff condition (see \cite[Lemma 6.5]{Fenley09}).

Given a transverse orientation of $\mathcal{\widetilde{F}}^{cu}$, we canonically orient each leaf of $\widetilde{\Lambda}^s_F$ for all $F \in \widetilde{\mathcal{F}}^{su}$. For any $p \in F$, let $l^s(p)$ denote the $\widetilde{\Lambda}^s_F$-leaf through $p$, with rays $l^s_+(p)$ and $l^s_-(p)$ originating from $p$. The ideal points $a_+(p), a_-(p) \in \partial_{\infty}F$ represent the endpoints of $l^s_+(p)$ and $l^s_-(p)$ respectively.

\begin{lem}\label{continuity}
    If the leaf space of $\widetilde{\Lambda}^s_F$ satisfies the Hausdorff condition for some $F \in \widetilde{\mathcal{F}}^{su}$, then the ideal points $a_+(p)$ and $a_-(p)$ vary continuously as $p$ ranges over $F$.
\end{lem}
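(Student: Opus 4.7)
The plan is to argue by contradiction. Assume that $a_+$ fails to be continuous at some $p\in F$, and pass to a sequence $p_n\to p$ in $F$ with $a_+(p_n)\to a$ for some $a\neq a_+(p)$ in $\partial_\infty F$. Write $L=l^s(p)$, $L_n=l^s(p_n)$, and let $\bar L,\bar L_n$ denote their closures in $\bar F=F\cup\partial_\infty F$ as Jordan arcs (each leaf has two well-defined ideal endpoints by Proposition~\ref{singlelimit}, so these closures are indeed arcs).

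First I would translate the Hausdorff hypothesis into a compact-open convergence statement. Since the leaf space $T$ of $\widetilde{\Lambda}^s_F$ is a connected, simply connected, Hausdorff one-manifold, $T$ is homeomorphic to $\mathbb{R}$ and the $C^0$ foliation is a topological product. In particular, $L_n\to L$ uniformly on each compact subset of $F$, and the coherent orientation of leaves coming from the transverse orientation of $\widetilde{\mathcal{F}}^{cu}$ is preserved in the limit.

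Next I would produce points on $L_n$ approaching the ideal point $a_+(p)$. Choosing a horoball $H\subset F$ based at $a_+(p)$ small enough that $a\notin\overline H$, a point $q\in l^s_+(p)\cap H$, and a short transversal $\tau$ to $\widetilde{\Lambda}^s_F$ through $q$, the compact convergence yields a unique $q_n\in L_n\cap\tau$ with $q_n\to q$ for large $n$; orientation consistency then forces $q_n\in l^s_+(p_n)$. Iterating over a sequence of base points $q^{(k)}\in l^s_+(p)$ tending to $a_+(p)$ and diagonalising produces, for each $n$, a point on the ray $l^s_+(p_n)$ arbitrarily close to $a_+(p)$.

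The contradiction would then combine the disjointness of distinct leaves with the single-point ideal accumulation of Proposition~\ref{singlelimit}. Disjointness forces $\bar L_n$ to lie in a single closed region of $\bar F\setminus\bar L$, so that $a_\pm(p_n)$ belong to one of the two closed ideal arcs of $\partial_\infty F$ cut off by $\{a_\pm(p)\}$. But the arc $\bar L_n$ must then pass through $p_n$ (close to $p\in\bar L$), through $q_n$ (close to the boundary point $a_+(p)$), and finally terminate at $a_+(p_n)$ (close to $a\neq a_+(p)$). Proposition~\ref{singlelimit} forbids $l^s_+(p_n)$ from accumulating at any ideal point other than $a_+(p_n)$, so beyond $q_n$ the arc must leave every fixed neighbourhood of $a_+(p)$ for good and traverse through the interior of $F$ toward a distinct ideal point $a$. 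A careful bookkeeping of the entries and exits of $\bar L_n$ from neighbourhoods of $a_+(p)$ then forces $\bar L_n$ to cross $\bar L$ in the interior, contradicting disjointness of leaves of a foliation. The edge case $a\in\{a_-(p)\}$ is excluded separately: such convergence would reverse the coherent local orientation near $p$, contradicting the transverse orientation provided by $\widetilde{\mathcal{F}}^{cu}$. Continuity of $a_-$ follows by a symmetric argument.

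The main obstacle is the mismatch between the hypothesis, which controls the leaves only on compact subsets of $F$, and the conclusion, which concerns the ideal boundary. The essential bridge is Proposition~\ref{singlelimit}: the uniqueness of ideal accumulation tames the potentially wandering non-geodesic stable rays enough to rule out the configuration in which $\bar L_n$ swings from one ideal endpoint of $\bar L$ to a distant ideal point without crossing $\bar L$ in the interior.
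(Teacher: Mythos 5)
There is a genuine gap in the final step, and it is not repairable within the hypotheses you actually use. Your first two steps (Hausdorff leaf space $\Rightarrow$ product structure and compact convergence $L_n\to L$; extraction of points $q_n\in l^s_+(p_n)$ converging to $a_+(p)$) are fine. But the claimed ``forced crossing'' is false: $L_n$ is disjoint from $L$, hence the closed arc $\bar L_n$ lies entirely in one of the two closed regions of $\bar F$ cut off by $\bar L$, and a neighbourhood of the ideal point $a_+(p)$ in $\bar F$ meets \emph{both} of these regions. The arc $\bar L_n$ can therefore pass near $p$, enter the neighbourhood of $a_+(p)$ through $q_n$, leave it again, and terminate at an ideal point near $a$, all while staying strictly on its own side of $\bar L$; no intersection with $\bar L$ is produced. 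Indeed, the statement is simply not a consequence of ``Hausdorff leaf space $+$ each ray has a single ideal point $+$ each leaf has two ideal points'': foliate a neighbourhood of a geodesic $L_0$ from $a$ to $c$ in $\mathbb{H}^2$ by disjoint curves $L_t$, $t<0$, each of which shadows $L_0$ for length $1/|t|$ and then turns off toward a fixed third ideal point $b$. On compact sets $L_t\to L_0$ and nothing else, so the leaf space is Hausdorff, every ray has a single ideal point, yet $a_+$ jumps from $b$ to $c$ at $t=0$.

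What your argument is missing is the standing hypothesis of this section, namely that the ideal limit set of $\widetilde{\Lambda}^s_F$ is \emph{dense} in $\partial_{\infty}F$ (the remark following the lemma in the paper states explicitly that this density is what the proof requires, besides Proposition \ref{singlelimit}). The paper's proof uses density to pick a leaf $l$ of $\widetilde{\Lambda}^s_F$ with an ideal point strictly inside the interval $I$ between $a_+(p)$ and the jump limit $a$; since the rays $l^s_+(p_n)$ reach ideal points tending to $a$, they must accumulate either on $l$ or on a leaf $l'$ separating them from $l$, and $l'$ lies in the same half-plane determined by $l^s(p)$ as the $l^s_+(p_n)$. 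Thus the sequence $l^s(p_n)$ accumulates simultaneously on the two distinct leaves $l^s(p)$ and $l'$, contradicting Hausdorffness of the leaf space directly --- rather than attempting to derive a topological crossing. To fix your write-up you would need to import this density input and replace the bookkeeping step by the production of such a second accumulated leaf.
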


\begin{figure}[htb]	
	\centering
	\includegraphics[scale=0.8]{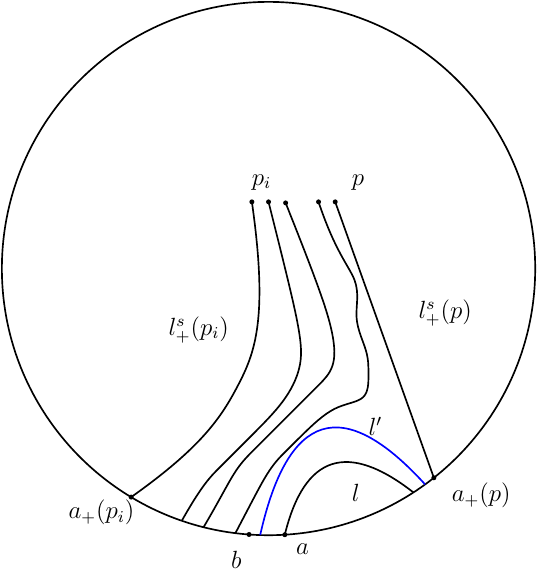}\\
	\caption{The non-continuity of ideal points}
	\label{fig:continuity}
\end{figure}

\begin{proof}
    Let $(p_i)_i$ be a sequence of points in $F \in \widetilde{\mathcal{F}}^{su}$ converging to $p \in F$. Suppose $a_+(p_i)$ fails to converge to $a_+(p)$. By passing to a subsequence, assume $a_+(p_i) \to b \in \partial_{\infty}F$ with $b \neq a_+(p)$. Define closed ideal intervals $I_i \subset \partial_{\infty}F$ bounded by $a_+(p_i)$ and $a_+(p)$, weakly nested and shrinking as $i \to \infty$ (allowing coincident endpoints for some $a_+(p_i)$). This yields a limiting closed ideal interval $I \subset \partial_{\infty}F$ with endpoints $a_+(p)$ and $b$.  

Density of $\widetilde{\Lambda}^s_F$'s limit set in $\partial_{\infty}F$ guarantees leaves of $\widetilde{\Lambda}^s_F$ with ideal points in $I$. We assert there exists a leaf $l \in \widetilde{\Lambda}^s_F$, distinct from $l^s(p)$, accumulated by the rays $l^s_+(p_i)$ and containing an ideal point in $I$. To verify, select any leaf $l$ with ideal point $a \in I$.

If the ray \( l^s_+(p_i) \) fails to accumulate at \( l \), stable foliation continuity ensures another leaf \( l' \in \widetilde{\Lambda}^s_F \) with an ideal point between \( a \) and \( b \) (see Figure~\ref{fig:continuity}). This \( l' \) separates \( l^s_+(p_i) \) from \( l \) for all \( i \in \mathbb{N} \), while not separating \( l^s_+(p_i) \) from \( l^s(p) \). If the leaf \(l'\) is not accumulated by \(l^s_+(p_i)\), repeat the process and finally we can find a limit leaf \(l_0\) being accumulated by \( l^s_+(p_i) \) as \( i \to \infty \). Observe that \( l^s_+(p_i) \) and \( l \) reside in the same half-plane of \( F \) bisected by \( l^s(p) \). Consequently, \( l^s(p) \) and \( l_0 \) become distinct leaves in \( F \), simultaneously accumulated by \( l^s(p_i) \), contradicting the Hausdorff property of \( \widetilde{\Lambda}^s_F \)'s leaf space.
\end{proof}

Observe that the lemma's proof does not utilize the absence of periodic points. The argument solely requires that every ray in a leaf of $\widetilde{\Lambda}^s_L$ converges to a unique ideal point in $\partial_{\infty}L$, combined with the density of $\widetilde{\Lambda}^s_L$'s limit set.

\subsection{Uniform distance from geodesics to curves}

We examine interactions between leaves of $\widetilde{\Lambda}^s_F$ and geodesics within $F$ for every leaf $F \in \widetilde{\mathcal{F}}^{su}$, conditional on $\widetilde{\Lambda}^s_F$ possessing a dense ideal limit set.

\begin{lem}\label{uniformbound}
    For any $F \in \widetilde{\mathcal{F}}^{su}$, let $l$ be a stable segment with endpoints $a$ and $b$, and let $l^*$ denote the geodesic segment connecting $a$ and $b$. There exists a constant $K_1 > 0$, independent of both $F$ and $l$, such that $l^*$ lies within the $K_1$-neighborhood of $l$. Furthermore, the same conclusion holds if $l$ is a ray or a leaf of $\widetilde{\Lambda}^s_F$, with $l^*$ being the corresponding geodesic ray or geodesic.
\end{lem}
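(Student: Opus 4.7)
The plan is a compactness-and-contradiction argument, making essential use of the density hypothesis of this section together with Lemma \ref{transverse_continuity} and Lemma \ref{QI}. I will first handle the segment case and then bootstrap to the ray and full leaf cases.

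For the segment case, suppose the uniform bound fails. Then there exist leaves $F_n \in \widetilde{\mathcal{F}}^{su}$, stable segments $l_n \subset F_n$ with endpoints $a_n, b_n$, and points $q_n \in l_n^*$ with $d_{F_n}(q_n, l_n) \geq n$. Apply deck transformations $g_n \in \pi_1(M)$ to move $g_n(q_n)$ into a fixed compact fundamental domain, and extract a subsequence so that $g_n(q_n) \to q_\infty \in F_\infty \in \widetilde{\mathcal{F}}^{su}$ with the unit tangent direction of $g_n(l_n^*)$ at $g_n(q_n)$ converging. Since $d_{F_n}(q_n, a_n) \geq n$ and $d_{F_n}(q_n, b_n) \geq n$, Lemma \ref{QI} forces the ambient distances $d(g_n(q_n), g_n(a_n))$ and $d(g_n(q_n), g_n(b_n))$ to tend to infinity. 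Hence $g_n(l_n^*)$ converges on compact sets to a biinfinite geodesic $l_\infty^* \subset F_\infty$ through $q_\infty$ with ideal endpoints $\alpha, \beta \in \partial_\infty F_\infty$, and the stable segments $g_n(l_n)$ themselves escape every compact neighborhood of $q_\infty$ in $\widetilde{M}$.

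To derive the contradiction, use the density of ideal points of $\widetilde{\Lambda}^s_{F_\infty}$ to produce a stable leaf $S \subset F_\infty$ whose two ideal points approximate $\alpha$ and $\beta$ respectively. Transfer $S$ back to $g_n(F_n)$ via the center projection (which sends stable leaves to stable leaves by the completeness of $\widetilde{\mathcal{F}}^{cs}$ and Corollary \ref{=1point}) to obtain a stable leaf $S_n \subset g_n(F_n)$ whose ideal points, by Lemma \ref{transverse_continuity}, lie close to those of $S$ on the cylinder at infinity $\mathcal{A}$. Because the geodesic in $F_\infty$ joining the two ideal points of $S$ passes close to $q_\infty$, the leaf $S_n$ comes close to $g_n(q_n)$ for $n$ large. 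On the other hand, extending $g_n(l_n)$ to its full stable leaf and using the single-ideal-point property of stable rays (Proposition \ref{singlelimit}), the ideal points of the full stable leaf through $g_n(l_n)$ are also forced close to $\alpha, \beta$, so $g_n(l_n)$ and $S_n$ are nearby leaves in the 1-manifold leaf space of $\widetilde{\Lambda}^s_{g_n(F_n)}$. Since distinct stable leaves are disjoint and ordered transversely, $g_n(l_n)$ itself must pass close to $g_n(q_n)$, contradicting $d_{g_n(F_n)}(g_n(q_n), g_n(l_n)) \geq n$.

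For the ray case, given a ray $l$ with starting point $a$ and (by Proposition \ref{singlelimit}) a single ideal endpoint $\xi$, exhaust $l$ by stable segments $l_k \subset l$ with endpoints $a$ and $b_k \to \xi$. The segment case gives a uniform $K_1$ with $l_k^*$ in the $K_1$-neighborhood of $l_k \subset l$, and passing $k \to \infty$ shows the geodesic ray from $a$ to $\xi$ lies in the $K_1$-neighborhood of $l$. The full leaf case follows by applying this to both rays of the leaf. The main obstacle is the identification in the contradiction step, namely rigorously showing that the escaping sequence $g_n(l_n)$ actually lies near $S_n$ in the leaf space of $\widetilde{\Lambda}^s_{g_n(F_n)}$: this requires carefully combining the completeness of $\widetilde{\mathcal{F}}^{cs}$, Proposition \ref{singlelimit}, and the transverse continuity of Lemma \ref{transverse_continuity}, and it is precisely where the density hypothesis of this section is used to turn the geometric limit $l_\infty^*$ into a usable target leaf in $F_\infty$.
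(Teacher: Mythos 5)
Your overall scaffolding (argue by contradiction, push the bad configuration into a compact part of $\widetilde M$ by deck transformations, pass to a limit leaf, and contradict the density hypothesis) is the same as the paper's, but the contradiction step has a genuine gap, and the gap is created by your choice of reference point. You base the limit at a point $q_n$ lying \emph{on} the geodesic $l_n^*$, so the limit object is a biinfinite geodesic with two distinct ideal points $\alpha,\beta$, and you then need to realize the pair $(\alpha,\beta)$ by an actual stable leaf passing near $q_\infty$. Density of the ideal limit set of $\widetilde{\Lambda}^s_{F_\infty}$ does not provide this: it gives leaves with \emph{one} ideal point near a prescribed point and says nothing about where the other ideal point lands. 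Worse, at this stage of the paper it is not even known that a leaf of $\widetilde{\Lambda}^s_{F_\infty}$ has two distinct ideal points (that is Proposition \ref{twodistinctiidealpoint}, which is proved \emph{after} and \emph{using} Lemma \ref{uniformbound}). Even granting such a leaf $S$, your inference ``the geodesic joining the ideal points of $S$ passes close to $q_\infty$, hence $S$ (and then $S_n$) passes close to $g_n(q_n)$'' uses the converse containment --- the stable leaf lying in a bounded neighborhood of its geodesic --- which the paper explicitly flags as unknown at this point and only establishes in Lemma \ref{quasi-geodesic}, again downstream of the present lemma; this makes your argument circular. The remaining steps compound the problem: the ideal points of the full stable leaf through $g_n(l_n)$ are not controlled by the positions of the segment endpoints $g_n(a_n),g_n(b_n)$ (Proposition \ref{singlelimit} locates no ideal point), and ``nearby in the leaf space'' does not imply ``metrically nearby at a specified point,'' quite apart from the fact that Hausdorffness of that leaf space is also proved later.

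The paper avoids all of this by placing the reference point \emph{off} the geodesic: it takes $q_i$ between $l_i$ and $l_i^*$ with $d(q_i,p_i)=i/2$ and $B(q_i,i/2)$ disjoint from $l_i$, so that $q_i$ recedes from both the segment and its geodesic. Then the visual angle at $q_i$ subtended by the endpoints $a_i,b_i$ tends to $0$, and after applying $g_i$ the wedge collapses to a single ideal point $z\in\partial_\infty L$. Consequently the projected stable segments $s_i$ lie outside balls $B(q_0,i/3)$ of radius tending to infinity while their endpoints crowd into an arbitrarily small ideal interval $I\ni z$; this traps the ideal limit set of $\widetilde{\Lambda}^s_L$ in $I$ and contradicts density directly, with no need to produce a leaf with two prescribed ideal points or to invoke any quasi-geodesic property. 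Your treatment of the ray and full-leaf cases by exhaustion is fine and matches the paper, but the core of the segment case needs to be redone along these lines.
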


\begin{proof}

    Assume the statement is false. Then for every $i \in \mathbb{N}$, there exist a leaf $F_i \in \widetilde{\mathcal{F}}^{su}$, a segment $l_i$ in a leaf of $\widetilde{\Lambda}^s_{F_i}$, and a corresponding geodesic segment $l^*_i$ connecting the endpoints of $l_i$, such that $l^*_i$ fails to be contained in the $i$-neighborhood of $l_i$. This implies the existence of a point $p_i \in l^*_i$ where the hyperbolic metric disk $B(p_i, i)$ - centered at $p_i$ with radius $i$ - remains disjoint from $l_i$. Let $a_i$ and $b_i$ denote the endpoints of $l_i$.

\begin{figure}[htb]
    \centering
    \includegraphics{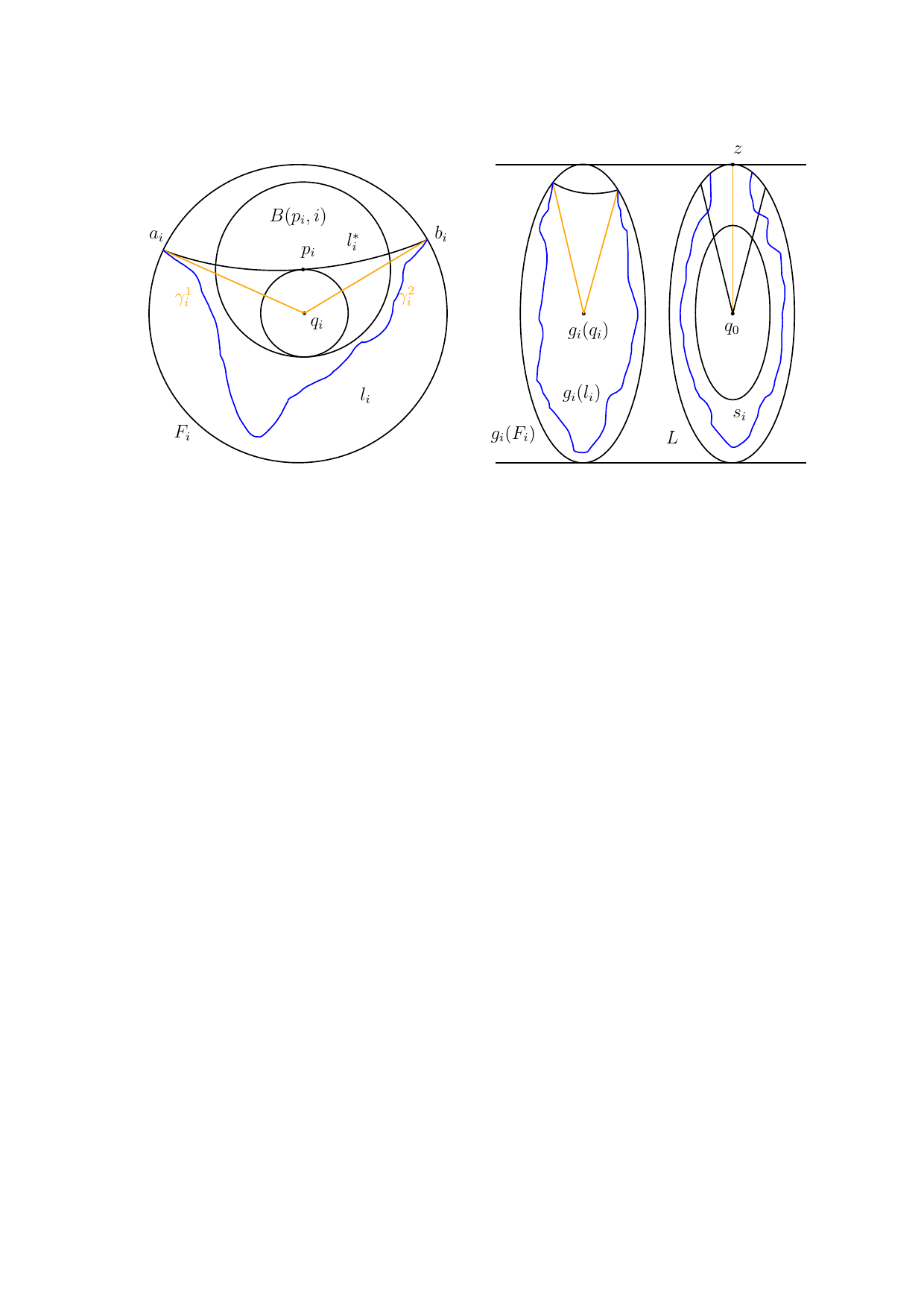}
    \caption{Geodesic segment $l^*_i$ escapes uniform neighborhoods of stable leaf $l_i$}
    \label{K_1}
\end{figure}

Select a sequence of points $q_i$ between $l_i$ and $l^*_i$ such that each $q_i$ lies in the interior of $B(p_i,i)$ and the disk $B(q_i, \frac{i}{2})$ is tangent to $B(p_i,i)$; see Figure~\ref{K_1}. By construction, $B(q_i, \frac{i}{2})$ remains disjoint from $l_i$. Choose deck transformations $g_i$ such that, after passing to a subsequence, $g_i(q_i)$ converges to a point $q_0$ in a leaf $L \in \widetilde{\mathcal{F}}^{su}$. 

For sufficiently large $i$, the leaf $g_i(l_i)$ projects via the center foliation to a curve $s_i$ in $L$, where $s_i$ is itself a leaf of $\widetilde{\Lambda}^s_L$. Since each $g_i(l_i)$ lies outside the disk $B(g_i(q_i), \frac{i}{2})$, continuity ensures that $s_i$ remains outside $B(q_0, \frac{i}{3})$ for large $i$. The choices of $p_i$ and $q_i$, combined with the isometric nature of deck transformations, ensure that the lengths of geodesic segments from $g_i(q_i)$ to $g_i(p_i)$ tend to infinity as $i \to \infty$.

Let $\gamma^1_i$ and $\gamma^2_i$ denote the geodesic rays connecting $q_i$ to $a_i$ and $b_i$ respectively. Define $W_i$ as the wedge bounded by $\gamma^1_i$ and $\gamma^2_i$, containing the geodesic $l^*_i$. As $i \to \infty$, the geodesic segment from $g_i(q_i)$ to $g_i(p_i)$ in $g_i(F_i)$ converges to a geodesic ray $\gamma$ in $L$ initiating at $q_0$. Let $\angle_{g_i(q_i)}(g_i(a_i), g_i(b_i))$ represent the angle at $g_i(q_i)$ formed by the wedge $g_i(W_i)$. 

Since the distance between $g_i(p_i)$ and $g_i(q_i)$ tends to infinity as $i \to \infty$, the angle $\angle_{g_i(q_i)}(g_i(a_i), g_i(b_i))$ approaches zero. Consequently, the wedge $g_i(W_i)$ collapses to the geodesic ray $\gamma$ in $L$. Let $z \in \partial_{\infty}L$ denote the ideal endpoint of $\gamma$. Observing that the boundary geodesics $\gamma^1_i$ and $\gamma^2_i$ of $W_i$ connect the endpoints $a_i$ and $b_i$ of $l_i$ respectively, we note that as the radius of $B(g_i(q_i), \frac{i}{2})$ tends to infinity, the endpoints of $g_i(l_i)$ converge to $z \in \partial_{\infty}L$.

Construct a narrow wedge $W$ at $q_0$ in $L$ whose ideal interval $I \subset \partial_{\infty}L$ contains $z$ as an interior point. By continuity, there exists $N \in \mathbb{N}$ such that for all $i \geq N$, the endpoints of $s_i$ lie in the interval $I$. Given that the radius of $B(q_0, \frac{i}{3})$ increases without bound, we may always select a leaf of $\widetilde{\Lambda}^s_L$ that:
1. Resides entirely outside the $R$-radius disk about $q_0$ for arbitrarily large $R \geq 0$;
2. Has its ideal endpoints contained in the interval $I$.

This implies that the limit set of $\widetilde{\Lambda}^s_L$ leaves is confined to the narrow interval $I \subset \partial_{\infty}L$, contradicting the density of the ideal points of leaves in $\partial_{\infty}L$.

An analogous argument applies when either endpoint of $l$ lies at an ideal point in $\partial_{\infty}F$. Therefore, $l$ may be taken as either a stable ray or a full stable leaf of $\widetilde{\Lambda}^s_F$, with $l^*$ representing the associated geodesic ray or complete geodesic respectively.
\end{proof}

We emphasize that the converse containment remains undetermined - it is not yet established whether $l$ lies within any uniform neighborhood of $l^*$.

\subsection{Distinct ideal points}

In this subsection, we establish that each stable curve possesses distinct ideal points, as shown in Proposition~\ref{twodistinctiidealpoint}. First, we characterize a necessary condition for the existence of stable curves with coinciding ideal points.

\begin{lem}\label{nonHausdorffness}
    If there exists a leaf of $\widetilde{\Lambda}^s_F$ possessing a single ideal point for some $F \in \widetilde{\mathcal{F}}^{su}$, then the leaf space of $\widetilde{\Lambda}^s_F$ within $F$ becomes non-Hausdorff. Furthermore, the leaf space of $\widetilde{\mathcal{F}}^{cs}$ consequently fails to be Hausdorff.
\end{lem}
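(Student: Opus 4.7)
The plan is to establish the contrapositive: assuming the leaf space of $\widetilde{\Lambda}^s_F$ is Hausdorff, I would show that no leaf of $\widetilde{\Lambda}^s_F$ can have its two ideal endpoints coincide. By Lemma \ref{continuity}, Hausdorffness yields continuity of the endpoint maps $p \mapsto a_\pm(p)$ on $F$. Suppose for contradiction that some leaf $l \in \widetilde{\Lambda}^s_F$ satisfies $a_+(l) = a_-(l) = a \in \partial_\infty F$, and let $U$ denote the component of $F \setminus l$ whose closure in $F \cup \partial_\infty F$ meets $\partial_\infty F$ only at $\{a\}$ (the ``cusp side'').

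Any other leaf of $\widetilde{\Lambda}^s_F$ meeting $U$ is disjoint from $l$ and properly embedded in $F$, so it is contained in $U$, and by Proposition \ref{singlelimit} its two ideal endpoints lie in $\overline U \cap \partial_\infty F = \{a\}$. Hence the closed set $E := a_+^{-1}(a) \cap a_-^{-1}(a)$ contains $\overline U$. The standing hypothesis of Section \ref{denselimit} that the ideal endpoints of $\widetilde{\Lambda}^s_F$ are dense in $\partial_\infty F$ forbids $E = F$, so the topological boundary $\partial E$ in $F$ is nonempty. I would pick $q \in \partial E$ together with a sequence $q_n \to q$ with $q_n \notin E$, and let $l_n \in \widetilde{\Lambda}^s_F$ be the leaf through $q_n$. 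Then at least one of $a_\pm(l_n)$ differs from $a$, yet continuity of $a_\pm$ at $q$ forces $a_\pm(l_n) \to a$.

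Now invoke Lemma \ref{uniformbound}: the geodesic $l_n^* \subset F$ joining the two ideal endpoints of $l_n$ lies within hyperbolic distance $K_1$ of $l_n$, and because $q_n \in l_n$ with $q_n \to q$, the geodesics $l_n^*$ come within hyperbolic distance $K_1 + 1$ of $q$ for all large $n$. On the other hand --- and this is the main obstacle of the proof --- when both ideal endpoints of a geodesic in the Poincar\'e disk converge to a common boundary point, the geodesic escapes every fixed hyperbolic ball. Concretely, the hyperbolic distance from any fixed interior point to the geodesic with endpoints separated by angular distance $\theta$ on the ideal circle grows like $\log(1/\theta)$ as $\theta \to 0$; since $a_\pm(l_n) \to a$ forces this angular distance to zero, one gets $d_F(q, l_n^*) \to \infty$, contradicting the uniform bound above. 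This contradiction proves that the leaf space of $\widetilde{\Lambda}^s_F$ is not Hausdorff.

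For the moreover statement, the assignment $\ell \mapsto \widetilde{\mathcal{F}}^{cs}(\ell)$ sending a leaf of $\widetilde{\Lambda}^s_F$ to the center-stable leaf containing it is injective by Lemma \ref{<1center} and continuous by transverse continuity of the foliations; surjectivity onto the center-stable leaves meeting $F$ is automatic, and every leaf of $\widetilde{\mathcal{F}}^{cs}$ meets $F$ by Corollary \ref{=1point}. Any sequence $\ell_n \in \widetilde{\Lambda}^s_F$ accumulating on two distinct leaves $\ell, \ell'$ therefore yields a sequence of center-stable leaves $\widetilde{\mathcal{F}}^{cs}(\ell_n)$ accumulating on the two distinct leaves $\widetilde{\mathcal{F}}^{cs}(\ell)$ and $\widetilde{\mathcal{F}}^{cs}(\ell')$, so the leaf space of $\widetilde{\mathcal{F}}^{cs}$ is not Hausdorff either.
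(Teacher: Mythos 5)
Your reduction to the contrapositive is reasonable in outline (Hausdorffness gives continuity of the endpoint maps via Lemma \ref{continuity}, the cusp-side analysis and the density hypothesis produce a boundary point $q$ of the set $E$, and the transfer to $\widetilde{\mathcal{F}}^{cs}$ at the end matches the paper's use of the regulating center foliation), but the decisive step is broken. You invoke Lemma \ref{uniformbound} in the wrong direction. That lemma says the geodesic $l_n^*$ is contained in the $K_1$-neighborhood of the stable leaf $l_n$; it does \emph{not} say that $l_n$ is contained in a uniform neighborhood of $l_n^*$. To conclude that ``the geodesics $l_n^*$ come within hyperbolic distance $K_1+1$ of $q$'' from the fact that $q_n\in l_n$ and $q_n\to q$, you need precisely the reverse containment: that the point $q_n$ of $l_n$ lies within $K_1$ of $l_n^*$. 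The paper is explicit that this is unavailable at this stage (``Notice that we do not know yet if $l$ is contained in a uniform neighborhood of $l^*$''); the reverse containment is only established in Lemma \ref{quasi-geodesic}, whose proof relies on Proposition \ref{twodistinctiidealpoint}, which in turn relies on the present lemma. So patching your argument by citing the quasi-geodesic property would be circular, and as written the contradiction ($d_F(q,l_n^*)\to\infty$ versus a bound that was never derived) does not materialize: $l_n^*\subseteq N_{K_1}(l_n)$ is perfectly compatible with $l_n$ containing a point near $q$ while $l_n^*$ escapes to infinity.

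For contrast, the paper's proof uses only the available direction of Lemma \ref{uniformbound}: it splits the single-ideal-point leaf $l$ at a point $x_0$ into two rays $l_1,l_2$ with the same ideal point $z$, notes that the geodesic ray from $x_0$ to $z$ lies in the $K_1$-neighborhood of \emph{each} ray, and thereby extracts points $p_i\in l_1$, $q_i\in l_2$ escaping to infinity with $d_F(p_i,q_i)\le 2K_1$ while their separation \emph{along} $l$ blows up. Applying deck transformations and passing to a limit leaf $L$, the bounded extrinsic distance forces the limits $p_0,q_0$ into the same leaf $L$ of $\widetilde{\mathcal{F}}^{su}$, while the unbounded intrinsic distance forces them into distinct leaves of $\widetilde{\Lambda}^s_L$ that are simultaneously accumulated by one sequence of leaves; projecting back along centers gives non-Hausdorffness in $F$ itself. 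If you want to keep your contrapositive framing, you would need to replace your final step by an argument of this type rather than by the (unproven) statement that stable leaves track their geodesics.
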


\begin{proof}

    Given a leaf $F \in \widetilde{\mathcal{F}}^{su}$, let $l^+$ and $l^-$ denote the ideal points of a leaf $l \in \widetilde{\Lambda}^s_F$. Assume $l^+ = l^- =: z \in \partial_{\infty}F$. Choose a basepoint $x_0 \in l$, which divides the leaf into two rays $l_1$ and $l_2$ emanating from $x_0$ and both asymptotic to $z$. Let $l^*_0$ be the geodesic ray from $x_0$ to $z$. By Lemma~\ref{uniformbound}, there exists a uniform constant $K_1 > 0$ such that $l^*_0$ lies within the $K_1$-neighborhood of both $l_1$ and $l_2$.

For any sequence $\{x_i\}$ in $l^*_0$ approaching $z$ as $i \to \infty$, there exist corresponding sequences $\{p_i\}$ in $l_1$ and $\{q_i\}$ in $l_2$ converging to $z$ with:
\[
d_F(p_i, x_i) \leq K_1 \quad \text{and} \quad d_F(q_i, x_i) \leq K_1
\]
for all $i \in \mathbb{N}$. This implies $d_F(p_i, q_i) \leq 2K_1$ uniformly. Through careful selection of deck transformations $\{g_i\}$, we may assume (after passing to a subsequence) that $g_i(p_i) \to p_0$ and $g_i(q_i) \to q_0$ within a common leaf $L \in \widetilde{\mathcal{F}}^{su}$, while maintaining $d_L(p_0, q_0) \leq 2K_1$.

\begin{figure}[htb]
		\centering
		\subcaptionbox{Single ideal point}
		{\includegraphics{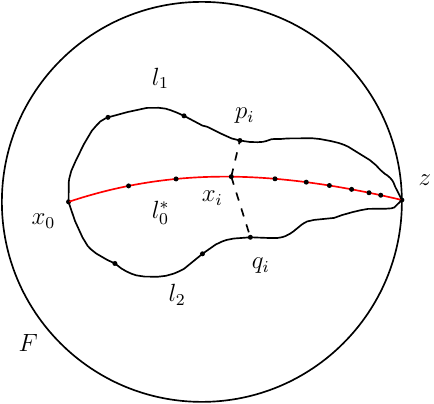}}
		\subcaptionbox{Non-Hausdorffness of $\widetilde{\Lambda}^s_L$}
		{\includegraphics{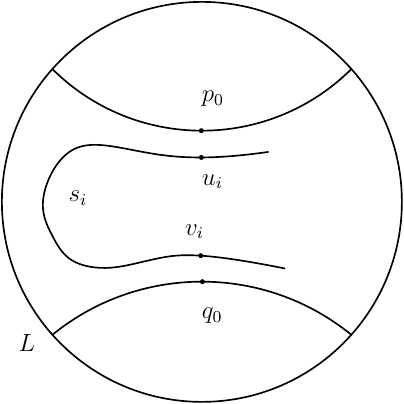}}
        \caption{Proof of Lemma \ref{nonHausdorffness}}
        \label{fig:singlepoint}
\end{figure}

    For sufficiently large $i$, the points $g_i(p_i)$ and $g_i(q_i)$ project via center leaves to points $u_i$ and $v_i$ in $L$. The segment of $g_i(l)$ between $g_i(p_i)$ and $g_i(q_i)$ projects via central leaves to a segment $s_i$ of a leaf in $\widetilde{\Lambda}^s_L$ connecting $u_i$ and $v_i$. Analysis reveals that $u_i \to p_0$ and $v_i \to q_0$ as $i \to \infty$; see Figure~\ref{fig:singlepoint}. 

This convergence precludes $p_0$ and $q_0$ from lying on the same leaf of $\widetilde{\Lambda}^s_L$: While their separation $d_L(p_0, q_0)$ remains bounded, any hypothetical shared leaf segment $s_0$ between them would inherit this bounded length. By convergence, $s_i$ would then also exhibit bounded length for large $i$. However, direct computation contradicts this - the length of $s_i$ diverges because the lengths of the $g_i(l)$ segments between $g_i(p_i)$ and $g_i(q_i)$ go to infinity. This contradiction forces $p_0$ and $q_0$ to reside on distinct leaves of $\widetilde{\Lambda}^s_L$.

Within leaf $L$, we observe a sequence of leaves in $\widetilde{\Lambda}^s_L$ converging to distinct leaves of $\widetilde{\Lambda}^s_L$, demonstrating that the leaf space of $\widetilde{\Lambda}^s_L$ in $L$ fails to be Hausdorff. Furthermore, projecting the leaves containing $p_0$ and $q_0$ along center foliation leaves onto $g_i(F)$ reveals two distinct leaves in $\widetilde{\Lambda}^s_{g_i(F)}$ that are simultaneously accumulated by a sequence of $\widetilde{\Lambda}^s_{g_i(F)}$ leaves. This demonstrates that both the leaf space of $\widetilde{\Lambda}^s_{g_i(F)}$ and consequently the leaf space of $\widetilde{\Lambda}^s_F$ are non-Hausdorff. The non-Hausdorff property for $\mathcal{F}^{cs}$ follows from the fundamental property that the center foliation $\widetilde{\mathcal{F}}^c$ regulates $\widetilde{\mathcal{F}}^{su}$.
\end{proof}

While the leaves containing $p_0$ and $q_0$ represent potential limits, the sequence $\{g_i(l)\}$ might additionally accumulate at other leaves of $\widetilde{\Lambda}^s_L$. We now establish a technical lemma essential for proving Proposition~\ref{twodistinctiidealpoint}.

\begin{lem}\label{unbounded}
    Let $F \in \widetilde{\mathcal{F}}^{cs}$ be a leaf where the stable subfoliation $\widetilde{\mathcal{F}}^s_F$ possesses a Hausdorff leaf space. Suppose there exists a non-trivial deck transformation $h$ preserving $F$. Given sequences $(x_i)_{i\in\mathbb{N}}$ in $F$, distinct integers $(n_i)_{i\in\mathbb{N}}$, and stable leaves $l_i \in \widetilde{\mathcal{F}}^s_F$ through $x_i$, if $h^{n_i}(x_0)$ lies in $l_i$ for all $i \in \mathbb{N}$, then the sequence $(x_i)_{i\in\mathbb{N}}$ must be unbounded.
\end{lem}

\begin{proof}
    By assumption, the leaf space $\mathcal{H}$ of $\widetilde{\mathcal{F}}^s_F$ is homeomorphic to $\mathbb{R}$. Endowing $\mathcal{H}$ with an appropriate order $<$, the nonexistence of $h$-invariant leaves in $\widetilde{\mathcal{F}}^s_F$ forces $h$ to act as a strictly increasing transformation under this ordering. Formally, this requires $\tau < h(\tau)$ for every $\tau \in \mathcal{H}$.

    Assume contrary that $(x_i)_{i\in\mathbb{N}}$ is bounded, which possesses a convergent subsequence. Without loss of generality, assume $(x_i)_{i\in\mathbb{N}}$ converges to some $x \in F$. Let $l \in \widetilde{\mathcal{F}}^s_F$ be the stable leaf through $x$. Leaf space continuity ensures $l_i \to l$ in $\mathcal{H}$. Passing to a subsequence, $h^{n_i}(l_0)$ converges monotonically to $l$ in $\mathcal{H}$ as $i \to \infty$.

For monotone increasing $n_i$, the relation $h^{n_i+1}(l_0) \leq h^{n_{i+1}}(l_0)$ holds universally. Thus,
$$h(l) = \lim_{i\to\infty} h^{n_i+1}(l_0) \leq \lim_{i\to\infty} h^{n_{i+1}}(l_0) = l.$$
This inequality necessitates either $l = h(l)$ or non-separation in $\mathcal{H}$ - both impossible under the Hausdorff condition. 

For decreasing $n_i$, we similarly obtain $h^{n_i+1}(l_0) \leq h^{n_{i-1}}(l_0)$, leading to:
$$h(l) = \lim_{i\to\infty} h^{n_i+1}(l_0) \leq \lim_{i\to\infty} h^{n_{i-1}}(l_0) = l,$$
again contradicting order preservation.
\end{proof}

The following proposition establishes that no leaf in $\widetilde{\Lambda}^s_F$ for any $F \in \widetilde{\mathcal{F}}^{su}$ possesses a single ideal point. Our argument adapts techniques from \cite{Fenley09}, where stable spike regions and Reeb annuli in leaves were studied, though Fenley's original arguments relied on pseudo-Anosov flow existence.

\begin{prop}\label{twodistinctiidealpoint}
    For any leaf $F \in \widetilde{\mathcal{F}}^{su}$ and $l \in \widetilde{\Lambda}^s_F$, the stable leaf $l$ possesses exactly two distinct ideal points.
\end{prop}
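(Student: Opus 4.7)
My plan is to argue by contradiction: suppose there is a leaf $l\in\widetilde{\Lambda}^s_F$ in some $F\in\widetilde{\mathcal{F}}^{su}$ with a single ideal point $z\in\partial_\infty F$. Writing $l=l^+\cup l^-$ as the union of its two rays from a fixed basepoint $x_0\in l$, both converging to $z$ by Proposition \ref{singlelimit}, I would first apply Lemma \ref{uniformbound} to the geodesic ray $l^*_0$ from $x_0$ to $z$: this geodesic ray lies within the $K_1$-neighborhood of both $l^+$ and $l^-$. Hence, marching out along $l^*_0$, I can choose sequences $p_i\in l^+$ and $q_i\in l^-$ with $p_i,q_i\to z$, satisfying $d_F(p_i,q_i)\le 2K_1$ for every $i$, while the length of the stable segment of $l$ joining $p_i$ and $q_i$ diverges to infinity.

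Next I would transport this ``spike'' configuration by deck transformations. Using compactness of $M$, choose $g_i\in\pi_1(M)$ so that (passing to a subsequence) $g_i(p_i)\to p_0$ and $g_i(q_i)\to q_0$ with $p_0,q_0$ in a common su-leaf $L\in\widetilde{\mathcal{F}}^{su}$ and $d_L(p_0,q_0)\le 2K_1$. Projecting the stable segments of $g_i(l)$ between $g_i(p_i)$ and $g_i(q_i)$ onto $L$ along the regulating center flow (Proposition \ref{>1point}) produces stable arcs in $\widetilde{\Lambda}^s_L$ with endpoints tending to $p_0,q_0$ and lengths blowing up. Therefore the stable leaves $\lambda_p,\lambda_q\in\widetilde{\Lambda}^s_L$ through $p_0,q_0$ are distinct and non-separated, so, exactly as in Lemma \ref{nonHausdorffness}, the leaf space of $\widetilde{\Lambda}^s_L$ and that of $\widetilde{\mathcal{F}}^{cs}$ fail to be Hausdorff.

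The final step is to derive a contradiction using Lemma \ref{unbounded}. The plan is to produce a nontrivial deck transformation $h$ leaving some center-stable leaf $\Sigma\in\widetilde{\mathcal{F}}^{cs}$ invariant whose restriction to $\Sigma\cap L$ acts by a non-identity monotone map on a Hausdorff piece of the stable leaf space of $\Sigma$, and then to exhibit a bounded sequence $x_i$ in $\Sigma$ together with distinct exponents $n_i$ so that $h^{n_i}(x_0)$ lies in the stable leaf through $x_i$. To get $h$, I would use that the set of branching pairs of the leaf space of the Reebless foliation $\widetilde{\mathcal{F}}^{cs}$ comes in only finitely many $\pi_1(M)$-orbits (a standard fact for taut codimension-one foliations on closed 3-manifolds), so that, after iterating, some power of $f$ must fix the branching pair $(\Sigma,\Sigma')$ produced above. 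Restricting to a half-plane of $\Sigma$ bounded away from the non-separated leaf $\Sigma'$ (where by Lemma \ref{continuity} the stable leaf space is Hausdorff), the regulating center flow transports the bounded proximity $d_L(p_0,q_0)\le 2K_1$ into the desired bounded sequence of $h$-iterates. This sequence violates the unboundedness asserted by Lemma \ref{unbounded}, completing the contradiction.

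The main obstacle is precisely this last step: extracting the invariant deck transformation $h$ from the branching structure without inadvertently creating a periodic point of $f$ (which is forbidden by hypothesis), and matching the bounded proximity of $p_0,q_0$ with iterate bounds compatible with Lemma \ref{unbounded}. This requires a careful synthesis of the minimality of $\mathcal{F}^{cs}$ (Lemma \ref{1minimal}), the regulating property of the center flow, the transverse continuity of stable ideal points (Lemma \ref{transverse_continuity}), and the finiteness of branching orbits, so that the invariance of $\Sigma$ under $h$ comes from the combinatorics of the foliation rather than from an actual fixed point of $f$ in $M$.
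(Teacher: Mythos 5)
Your first two steps faithfully reproduce the paper's setup: the spike configuration from Lemma \ref{uniformbound} applied to the two rays of $l$, the transport by deck transformations $g_i$ to a pair of non-separated leaves $s,r\subset L$, and the resulting non-Hausdorffness as in Lemma \ref{nonHausdorffness}. The gap is in your final step, which is where all the actual work of the proposition lies. The paper does \emph{not} obtain the deck transformation $h$ from any finiteness of branching orbits or from a power of $f$ fixing a branching pair; it extracts $h$ from the construction itself. Concretely, one projects $g_i(l)$ to a nested family $\alpha_i$ in $L$ with a common ideal point $\xi$, orders the accumulation sets $C_1,C_2$ of the two rays $\beta_i,\gamma_i$, shows their associated ideal points satisfy $\xi_1=\xi_2=\xi$, and then builds a sequence $z_k\to\xi$ along a geodesic with companion points $b(z_k),c(z_k)$ on accumulated leaves at distance $\le 2K_1$. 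Bringing $z_k$ back by deck transformations $g_k$, one proves that for large $n$ the images $g_n(s_{j_n})$, $g_n(r_{j_n})$ land in \emph{fixed} center-stable leaves $S_0,R_0$; hence the compositions $H_i=g_{N_\epsilon}^{-1}\circ g_{N_\epsilon+i}$ preserve $S_0$ and $R_0$, and since each center-stable leaf is a cylinder or plane (Lemma \ref{cylin-plane}, via completeness), $H_i=h^{n_i}$ for a single generator $h$ with the $n_i$ distinct. Only then does Lemma \ref{unbounded} apply, with $x_i=g_{N_\epsilon+i}(b(z_{N_\epsilon+i}))$ bounded by construction. None of this data $(h,n_i,x_i)$ is produced by your sketch; ``the regulating center flow transports the bounded proximity into the desired bounded sequence of $h$-iterates'' is precisely the assertion that needs proof.

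Two further problems with the route you propose. First, invoking a power of $f$ that fixes the branching pair $(\Sigma,\Sigma')$ would at best give an $f$-periodic center-stable leaf, not a nontrivial element of $\pi_1(M)$ stabilizing $\Sigma$, and Lemma \ref{unbounded} needs the latter together with the exact relation $h^{n_i}(x_0)\in l_i$; the finiteness of $\pi_1$-orbits of non-separated pairs (even granting it for this foliation) does not supply these exponents. Second, you invoke Lemma \ref{continuity} on ``a Hausdorff piece'' of the stable leaf space, but Lemma \ref{continuity} concerns the stable subfoliation of an \emph{su}-leaf, whose leaf space you have just shown to be non-Hausdorff; the Hausdorffness hypothesis of Lemma \ref{unbounded} instead concerns the stable subfoliation of a \emph{center-stable} leaf, which is automatic from completeness and Corollary \ref{=1point}. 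Conflating these two leaf spaces makes the final contradiction unobtainable as written.
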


\begin{proof}
    Let $l^+$ and $l^-$ denote the ideal points of a leaf $l \in \widetilde{\Lambda}^s_F$ for some $F \in \widetilde{\mathcal{F}}^{su}$. Suppose there exists a leaf $l \in \widetilde{\Lambda}^s_F$ with identical ideal points $z := l^+ = l^- \in \partial_{\infty}F$. Following Lemma~\ref{nonHausdorffness}, fix $x_0 \in l$ splitting it into rays $l_1$ and $l_2$. By Lemma~\ref{uniformbound}, there exist sequences $p_i \in l_1$ and $q_i \in l_2$ satisfying $d_F(p_i, q_i) \leq 2K_1$ for all $i \in \mathbb{N}$, where $K_1$ is the uniform constant from Lemma~\ref{uniformbound}.

Select deck transformations $g_i$ (after subsequence extraction) such that $g_i(p_i) \to p_0$ and $g_i(q_i) \to q_0$ within a common leaf $L \in \widetilde{\mathcal{F}}^{su}$. Let $s, r \in \widetilde{\Lambda}^s_L$ denote distinct leaves containing $p_0$ and $q_0$ respectively. Define $\alpha_i$ as leaves in $L$ obtained by projecting $g_i(l)$ along $\widetilde{\mathcal{F}}^c$, with $u_i, v_i \in \alpha_i$ located on center leaves through $g_i(p_i)$ and $g_i(q_i)$ respectively - well-defined due to $\widetilde{\mathcal{F}}^c$ regulating $\widetilde{\mathcal{F}}^{su}$ (see Figure~\ref{fig:nonHaus1}). 

Observe $u_i \to p_0$ and $v_i \to q_0$ as $i \to \infty$. For large $i$, each $\alpha_i$ satisfies $\alpha_i^+ = \alpha_i^- \in \partial_{\infty}L$ by continuous variation of $g_i(l)$'s ideal points. The path distance between $u_i$ and $v_i$ along $\alpha_i$ tends to infinity while all $\alpha_i$ only intersect at $\partial_{\infty}L$, yielding a nested subsequence of $(\alpha_i)_{i=1}^\infty$ sharing a common ideal point $\xi$. After further subsequence selection, assume $\{\alpha_i\}$ forms a nested collection. Let $\beta_i \subset \alpha_i$ and $\gamma_i \subset \alpha_i$ denote rays from $u_i$ and $v_i$ respectively, both asymptotic to $\xi$.

	\begin{figure}[htb]	
		\centering
		\includegraphics{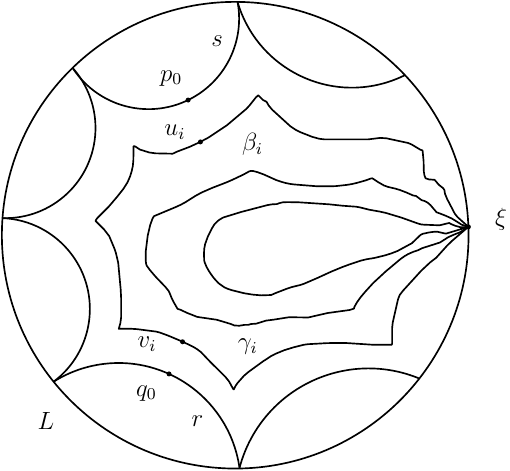}\\
		\caption{Non-Hausdorffness in the limit leaf $L$}
		\label{fig:nonHaus1}
	\end{figure}

    Let $C_1$ and $C_2$ denote collections of leaves in $\widetilde{\Lambda}^s_L$ accumulated by $\beta_i$ and $\gamma_i$ respectively. The inclusion $s \in C_1$ and $r \in C_2$ follows from the convergences $u_i \to p_0 \in s$ and $v_i \to q_0 \in r$. Define $C := C_1 \cup C_2$. We exclude leaves $c \in C$ satisfying $c^+ = c^-$, as these do not impact our conclusion. Consequently, we may assume each leaf in $C$ possesses two distinct ideal points, with any pair sharing at most one ideal point.

The cardinality of $C_1$ and $C_2$ could be finite or infinite. In either case, $C_1$ admits an ordering - finite or isomorphic to $\mathbb{N}$ - and similarly for $C_2$.

Indeed, this order arises naturally through transversality. Each leaf in $C_1$ accumulates points from $(\beta_i)$ as $i \to \infty$. By transverse intersection properties, the unstable manifold through any such accumulation point must intersect $\beta_i$ for sufficiently large $i$. As every $\beta_i$ constitutes a 1-dimensional ray asymptotic to $\xi \in \partial_{\infty}L$, we fix an orientation where the direction from $u_i$ to $\xi$ represents the positive orientation. This induces a natural order on $\beta_i$: For $x_i, y_i \in \beta_i$, declare $x_i < y_i$ if $y_i$ lies positively downstream from $x_i$. For leaves $A, B \in C_1$, define $A < B$ when there exist points $a_0 \in A$, $b_0 \in B$, and an index $i \gg 0$ where their local unstable manifolds intersect $\beta_i$ at:
\[
a_i := \widetilde{W}^u_{\text{loc}}(a_0) \cap \beta_i, \quad b_i := \widetilde{W}^u_{\text{loc}}(b_0) \cap \beta_i
\]  
satisfying $a_i < b_i$ in the $\beta_i$-order. Lemma~\ref{at-most-once} ensures unique intersections since unstable leaves intersect stable rays at most once. Crucially, this order remains consistent across indices $i$ due to coherent $\beta_i$ orientations and it is independent of representative points $a_0$, $b_0$ by leaf uniformity. Thus $C_1$ admits a total order - either finite or order-isomorphic to $\mathbb{N}$ - through this transverse intersection hierarchy.

Observe that points on $\gamma_i$ converge to $\xi$ with reverse orientation compared to $\beta_i$, as both rays belong to the same leaf $\alpha_i$. Consequently, $C_2$ requires an inverse ordering. Designate the direction from $v_i$ to $\xi$ as the negative orientation of $\gamma_i$ for all $i \in \mathbb{N}$. For $x_i, y_i \in \gamma_i$, declare $x_i < y_i$ if $x_i$ precedes $y_i$ in the negative-oriented sequence. This dual structure induces:  
- $C_1$ ordered as $\mathbb{N}$ or finite positives  
- $C_2$ ordered as $-\mathbb{N}$ or finite negatives.
The combined order on $C = C_1 \cup C_2$ satisfies $c_1 > c_2$ for all $c_1 \in C_1$, $c_2 \in C_2$, completing the total ordering.

For finite $C_1$, let $s'$ denote the maximal leaf in $C_1$ and $\xi_1$ its ideal point where no other leaves in $C_1$ share $\xi_1$. When $C_1$ is infinite, define $\xi_1$ as the limit of ideal points from $C_1$'s ordered leaves. Analogously, construct $\xi_2$ for $C_2$ through this dual procedure.

We will establish $\xi_1 = \xi_2$ by their mutual convergence to $\xi$ - the common ideal point of all $\alpha_i$ from Proposition~\ref{twodistinctiidealpoint}'s construction. Thus $\xi_1 = \xi_2 = \xi$ follows inherently from the asymptotic coherence of nested stable leaves.

Assume $\xi_1$ and $\xi_2$ are distinct ideal points in $\partial_{\infty}L$. Select disjoint ideal intervals $I_1, I_2 \subset \partial_{\infty}L$ such that:
- $\xi_1 \in \text{int}(I_1)$, 
- $\xi_2 \in \text{int}(I_2)$,  
- $I_1 \cap I_2 = \emptyset$.
Fix a basepoint $x_0 \in L$. Construct a wedge $W_1$ by connecting $x_0$ to $I_1$'s boundary points via geodesics. Similarly, define $W_2$ as the geodesic wedge from $x_0$ to $I_2$'s endpoints.

When $C_1$ is finite, $\xi_1$ serves as an ideal point of the maximal leaf $s' \in C_1$. Since $s'$ accumulates $\beta_i$ sequences, every neighborhood of $s'$ contains terminal segments of $\beta_i$ for sufficiently large $i \in \mathbb{N}$. As $\xi_1$ represents $C_1$'s extremal ideal point, beyond some index $i_0$, the truncated rays $\beta_i|_{[u_i,\xi)}$ reside entirely within any prescribed neighborhood of $s'$. To formalize this, fix a disk $D = B(x_0, R) \subset L$ with radius $R > 0$ such that $s'$'s $\xi_1$-asymptotic ray lies in $W_1 \setminus D$ after crossing $\partial D$ (see Figure~\ref{fig:nonHaus2}). For $i \gg 0$, the containment $\beta_i \cap (W_1 \setminus D) \subset W'_1$ holds, where $W'_1 \subset W_1$ is a narrower wedge guaranteed by $\xi_1 \in \text{int}(I_1)$. Under center foliation projection, the ideal endpoint of $g_i(l_1)$ necessarily resides in $J_1$ - the boundary interval of $W'_1$'s associated asymptotic sector.

\begin{figure}[htb]	
    \centering
    \includegraphics[scale=1.2]{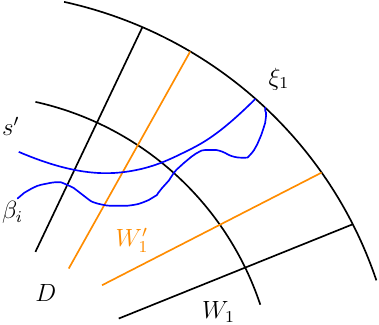}
    \caption{The curve $\beta_i$ remains confined in $W'_1 \setminus D$ after crossing $\partial D$}
    \label{fig:nonHaus2}
\end{figure}

For infinite $C_1$, let $\{s_j\}_{j\in\mathbb{N}}$ denote the ordered leaves whose ideal points converge to $\xi_1$. All leaves $s_j$ are distinct and ordered isomorphic to $\mathbb{N}$. There exists $j_0 \in \mathbb{N}$ such that for all $j \geq j_0$, $s_j \subset W_1$. As $\xi_1$ is the limit of these ideal points, terminal segments of $\beta_i$ for $i \gg 0$ lie entirely within $W_1 \setminus D$ and further concentrate in a nested wedge $W'_1 \subset W_1$. Through center foliation projection, the ideal endpoint of $g_i(l_1)$ necessarily resides in $J_1$ - the boundary interval of $W'_1$'s asymptotic sector.

An analogous argument applies to $\xi_2$, demonstrating the ideal point of $g_i(l_2)$ lies within a distinct small interval $J_2 \subset I_2$. The disjointness of $I_1$ and $I_2$ forces $J_1 \cap J_2 = \emptyset$. However, this contradicts the fundamental property that $g_i(l_1)$ and $g_i(l_2)$ share identical ideal points as lifts of the original leaf $l$ with $l^+=l^-=z$. This contradiction establishes $\xi_1 = \xi_2 = \xi$.

The collection $C$ manifests in two scenarios: finite or infinite cardinality. We examine these cases distinctly. 

First consider the infinite case, where at least one subset $C_1$ or $C_2$ contains infinitely many leaves. Consider sequences $\{s_j\}_{j\in\mathbb{N}} \subset C_1$ and $\{r_j\}_{j\in\mathbb{N}} \subset C_2$ whose ideal points converge to $\xi \in \partial_{\infty}L$. For finite $C_1$, assume $s_j = s_N$ for all $j \geq N$ (analogously for finite $C_2$). Crucially, the index $j \in \mathbb{N}$ does not reflect the total order in $C$, where $C_1$ elements dominate $C_2$. The sequence $s_j$ progresses upward in $C$'s order as $j$ increases, while $r_j$ descends due to their opposing convergence directions to $\xi$. This dichotomy arises from the nested leaves $\{\alpha_i\}_{i\in\mathbb{N}}$ sharing $\xi$ as their common ideal point, where:
- $\beta_i \subset \alpha_i$ converges to $C_1$ while ascending toward $\xi$;
- $\gamma_i \subset \alpha_i$ converges to $C_2$ while descending toward $\xi$.

Let $\tau$ be a geodesic with ideal endpoint $\xi$ that eventually separates all but finitely many $\beta_i$ and $\gamma_i$. By Lemma~\ref{uniformbound}, any $\xi$-asymptotic ray of $\tau$ starting at $\tau \cap \alpha_i$ remains within a uniform $K_1$-neighborhood of $\alpha_i$ for all $i \in \mathbb{N}$. Select a sequence $\{z_k\}_{k\in\mathbb{N}}$ along $\tau$ converging to $\xi$ that satisfies:
\begin{itemize}
    \item There are points $b_i(z_k) \in \beta_i$ and $c_i(z_k) \in \gamma_i$ with $d_L(z_k, b_i(z_k)) \leq K_1$ and $d_L(z_k, c_i(z_k)) \leq K_1$ for all $i, k \in \mathbb{N}$;
    \item Convergence $b_i(z_k) \to b(z_k) \in s_{j_k} \subset C_1$ and $c_i(z_k) \to c(z_k) \in r_{j_k} \subset C_2$ as $i \to \infty$;
    \item Monotonicity $j_k \nearrow \infty$ with $k$.
\end{itemize}
See Figure~\ref{fig:nonHaus3}. This configuration yields uniform bounds:
\[
d_L(b_i(z_k), c_i(z_k)) \leq 2K_1 \quad \forall i,k \in \mathbb{N}
\]
and consequently:
\[
d_L(b(z_k), c(z_k)) \leq 2K_1 \quad \forall k \in \mathbb{N}
\]
where $K_1$ is the universal constant from Lemma~\ref{uniformbound}. 

\begin{figure}[htb]
    \centering
    \includegraphics{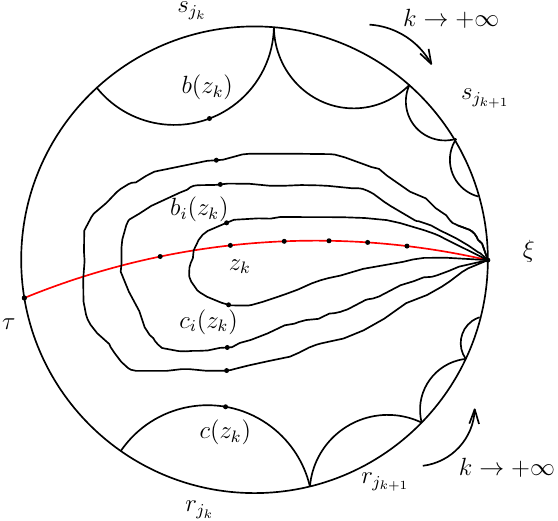}
    \caption{Convergence mechanism for infinite configuration $C$}
    \label{fig:nonHaus3}
\end{figure}

Project each point $z_k$ via the covering map $\pi \colon \widetilde{M} \to M$. After passing to a subsequence, assume $\pi(z_k)$ converges in $M$. Choose deck transformations $\{g_k\}$ such that $g_k(z_k) \to z_0$, and let $L_0 \in \widetilde{\mathcal{F}}^{su}$ denote the leaf containing $z_0$. As established in Lemma~\ref{distinguished}, $L_0$ serves as the $k \to \infty$ limit of $g_k(L)$. Given $d_L(z_k, b(z_k)) \leq K_1$ for all $k \in \mathbb{N}$, refine the subsequence further to ensure $g_k(b(z_k)) \to b(z_0) \in L_0$. Analogously, select a subsequence where $g_k(c(z_k)) \to c(z_0) \in L_0$. Both $b(z_0)$ and $c(z_0)$ lie within the $K_1$-radius disk centered at $z_0$ in $L_0$. For any $\epsilon > 0$, there exists $N_\epsilon \in \mathbb{N}$ such that for all $n \geq N_\epsilon$:
\[
d_{\widetilde{M}}(g_n(b(z_n)), b(z_0)) < \epsilon \quad \text{and} \quad d_{\widetilde{M}}(g_n(c(z_n)), c(z_0)) < \epsilon.
\]

Let $s_0 \in \widetilde{\Lambda}^s_{L_0}$ denote the stable leaf through $b(z_0)$, and $S_0 \in \widetilde{\mathcal{F}}^{cs}$ its associated center-stable leaf. Similarly, define $r_0 \in \widetilde{\Lambda}^s_{L_0}$ as the stable leaf through $c(z_0)$ with corresponding center-stable leaf $R_0 \in \widetilde{\mathcal{F}}^{cs}$. We claim that for sufficiently small $\epsilon > 0$, the inclusions $g_n(s_{j_n}) \subset S_0$ and $g_n(r_{j_n}) \subset R_0$ hold for all $n \geq N_\epsilon$.

Assume contrarily that for some \( n \geq N_{\epsilon} \), the intersection \( S_0 \cap g_n(L) \) contains a leaf \( s_0^n \in \widetilde{\Lambda}^s_{g_n(L)} \) distinct from \( g_n(s_{j_n}) \). Equivalently, \( S_n \cap L_0 \) contains a leaf \( s^0_{j_n} \in \widetilde{\Lambda}^s_{L_0} \) distinct from \( s_0 \), where \( S_n \in \widetilde{\mathcal{F}}^{cs} \) denotes the leaf containing \( g_n(s_{j_n}) \). We observe that $g_k(b(z_k)) \to b(z_0)$ as $k \to \infty$ and $g_k(b_i(z_k)) \to g_k(b(z_k))$ as $i \to \infty$.
 
The two half-planes in \( g_n(L) \), divided by the leaf \( g_n(s_{j_n}) \), must separately contain \( s_0^n \) and the collection \( \{g_n(\alpha_i)\}_{i\in\mathbb{N}} \). Indeed, if \( s_0^n \) resides in the same half-plane as some \( g_n(\alpha_i) \), it becomes confined within the band bounded by consecutive leaves \( g_n(\alpha_{i_0}) \) and \( g_n(\alpha_{i_0+1}) \) for some \( i_0 \in \mathbb{N} \). This forces \( s_0^n \) to possess only one ideal point \( g_n(\xi) \). Recall that \( C_1 \) comprises stable leaves with distinct ideal points. By Lemma~\ref{transverse_continuity}, the continuity of ideal points ensures \( s_0 \) (and consequently \( s_0^n \)) must have distinct ideal points. This is a contradiction.

By the diagonal principle, \( g_k(b_k(z_k)) \in g_k(\beta_k) \) converges to \( b(z_0) \in s_0 \). Projecting \( g_k(\alpha_k) \) along \( \widetilde{\mathcal{F}}^c \) yields leaves \( \alpha_k^0 \in \widetilde{\Lambda}^s_{L_0} \) accumulating on \( s_0 \). Suppose \( s_{j_n}^0 \) and \( \alpha_k^0 \) lie within the same \( s_0 \)-divided half-plane of \( L_0 \). The leaf \( s_{j_n}^0 \) must then reside in the band bounded by \( \alpha_{k_0}^0 \) and \( \alpha_{k_0+1}^0 \) for some \( k_0 \in \mathbb{N} \). The continuity in Lemma~\ref{transverse_continuity} yields a contradiction: While \( \alpha_{k_0}^0 \) and \( \alpha_{k_0+1}^0 \) share a common ideal point, \( s_{j_n}^0 \) maintains distinct ideal points. Consequently, \( s_0 \) separates \( s_{j_n}^0 \) from the sequence \( \alpha_k^0 \).

Observe that \( s_0, s_0^n \subset S_0 \) and \( g_n(s_{j_n}), s_{j_n}^0 \subset S_n \). This leads to a contradiction: Leaves of \( \widetilde{\mathcal{F}}^{cs} \) are properly embedded, and any leaf must reside entirely within one of the two half-spaces determined by another leaf. We therefore conclude \( s_0^n = g_n(s_{j_n}) \subset S_0 \) for all \( n \geq N_{\epsilon} \). Analogous argument applies to \( g_n(r_{j_n}) \subset R_0 \).

Define transformations \( H_i := g^{-1}_{N_\epsilon} \circ g_{N_\epsilon + i} \) for all \( i \in \mathbb{N} \). Both \( S_0 \) and \( R_0 \) remain \( H_i \)-invariant for every \( i \). By Lemma~\ref{cylin-plane}, every center-stable leaf constitutes either a cylinder or plane. Consequently, \( H_i = h^{n_i} \) for some non-trivial deck transformation $h$. Assume without loss that \( n_i > 0 \) for all \( i \in \mathbb{N} \).

Given the infinitude of \( C_1 \) or \( C_2 \), either \( s_{j_k} \) or \( r_{j_k} \) must be pairwise distinct. Suppose \( s_{j_k} \) are all distinct and $h$ is chosen to preserve $S_0$. Then for \( n > m \geq N_\epsilon \), the leaves \( g_m(s_{j_n}) \) and \( g_m(s_{j_m}) \) differ, as do their center-stable leaves. The transformation \( g_n \circ g_m^{-1} \) sends \( g_m(s_{j_n}) \) to \( g_n(s_{j_n}) \). Since \( g_n(s_{j_n}) \) and \( g_m(s_{j_m}) \) lie in \( S_0 \), \( g_n \circ g_m^{-1} \) is non-trivial. This forces \( H_i = h^{n_i} \) to be non-trivial deck transformations with distinct positive iterations \( n_i \), ensuring all \( H_i \) are distinct. 

Define \( x_i := g_{N_\epsilon + i}(b(z_{N_\epsilon + i})) \) and \( l_i := g_{N_\epsilon + i}(s_{j_{N_\epsilon + i}}) \) for each \( i \in \mathbb{N} \). Let \( x := b(z_0) \) and \( l := s_0 \), observing \( h^{n_i}(l_0) = l_i \). These constructions satisfy:
- \( x_i \) and \( l_i \) lie within the center-stable leaf \( S_0 \in \widetilde{\mathcal{F}}^{cs} \);
- \( S_0 \) remains invariant under the nontrivial deck transformation \( h \);
- The Hausdorff property of the \( \widetilde{\mathcal{F}}^{su} \) leaf space.

By Lemma~\ref{unbounded}, the sequence \( \{x_i\} \) cannot be bounded. This directly contradicts the established convergence \( g_i(b(z_i)) \to x \).

Consequently, both \( C_1 \) and \( C_2 \) must be finite. This finiteness guarantees the existence of boundary leaves \( s' \in C_1 \) and \( r' \in C_2 \) sharing \( \xi \) as a common ideal point in \( \partial_{\infty}L \).

\begin{figure}[htb]	
    \centering
    \includegraphics{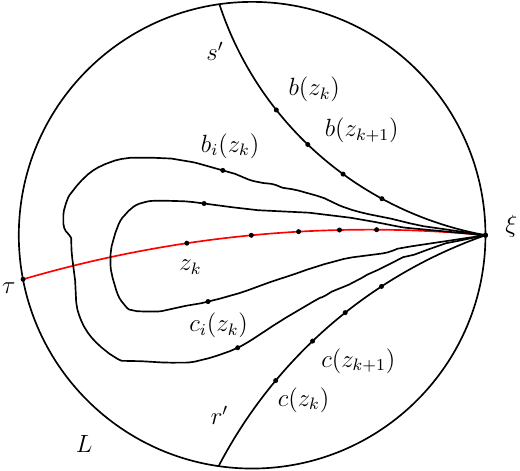}
    \caption{Convergence mechanism for two elements in $C$}
    \label{fig:nonHaus4}
\end{figure}

Consider a geodesic $\tau$ with ideal endpoint $\xi$ and a sequence $\{z_k\}_{k\in\mathbb{N}} \subset \tau$ where:
\begin{itemize}
    \item $z_k \to \xi$ as $k \to \infty$;
    \item For each $k$, there exist $b_i(z_k) \in \beta_i$ and $c_i(z_k) \in \gamma_i$ satisfying $d_L(z_k, b_i(z_k)) \leq K_1$ and $d_L(z_k, c_i(z_k)) \leq K_1$ for all $i \in \mathbb{N}$;
    \item The sequences $b_i(z_k) \to b(z_k) \in s'$ and $c_i(z_k) \to c(z_k) \in r'$ as $i \to \infty$. 
\end{itemize}
See Figure~\ref{fig:nonHaus4}. Furthermore, we deduce $d_L(b(z_k), c(z_k)) \leq 2K_1$ for all $k \in \mathbb{N}$, with both $b(z_k)$ and $c(z_k)$ converging to $\xi$ as $k \to \infty$.

Choose deck transformations $\{g_k\}$ such that (after subsequence extraction):
- $g_k(z_k) \to z^* \in L^*$;
- $g_k(L) \to L^* \in \widetilde{\mathcal{F}}^{su}$; 
- $g_k(b(z_k)) \to b(z^*) \in L^*$;
- $g_k(c(z_k)) \to c(z^*) \in L^*$.

Let $s^*, r^* \in \widetilde{\Lambda}^s_{L^*}$ denote the stable leaves through $b(z^*)$ and $c(z^*)$ respectively - these form a non-separated pair. Similarly to the infinite case, their associated center-stable leaves $S^*, R^* \in \widetilde{\mathcal{F}}^{cs}$ satisfy: For sufficiently small $\epsilon > 0$, there exists $N_\epsilon \in \mathbb{N}$ such that for all $n \geq N_\epsilon$:
\[
g_n(s') \subset S^* \quad \text{and} \quad g_n(r') \subset R^*.
\] The transformations $H_i := g^{-1}_{N_\epsilon} \circ g_{N_\epsilon + i}$ preserve both $S^*$ and $R^*$, maintaining their invariance under deck transformations.

There exists \( i \in \mathbb{N} \) for which \( H_i \) is non-trivial. Suppose otherwise: if \( H_i = \text{id} \) for all \( i \in \mathbb{N} \), then \( g_{N_\epsilon + i} = g_{N_\epsilon} \). This implies:
\[
L^* = \lim_{i \to \infty} g_i(L) = \lim_{i \to \infty} g_{N_\epsilon + i}(L) = g_{N_\epsilon}(L)
\]
and
\[
b(z^*) = \lim_{i \to \infty} g_i(b(z_i)) = \lim_{i \to \infty} g_{N_\epsilon}(b(z_{N_\epsilon + i})).
\]
Since \( b(z^*) \in L^* = g_{N_\epsilon}(L) \), there exists \( y \in L \) such that \( b(z^*) = g_{N_\epsilon}(y) \). The isometric property of \( g_{N_\epsilon} \) yields:
\[
y = \lim_{i \to \infty} b(z_{N_\epsilon + i}) = \xi \in \partial_\infty L,
\]
which is impossible as \( y \in L \) cannot equal an ideal point. This contradiction proves \( H_i \neq \text{id} \) for some \( i \).

Given that the fundamental group of each \( \mathcal{F}^{cs} \)-leaf has at most one generator, \( H_i = h^{n_i} \) for some non-trivial deck transformation \( h \) preserving \( S^* \).

We now establish the existence of a subsequence \((n_{i_m})_{m\in\mathbb{N}}\) of \((n_i)\) with distinct terms such that \(\lim_{m \to \infty} h^{n_{i_m}}(g_{N_\epsilon}(L)) = L^*\). Assume contrarily that some \(N_1 \in \mathbb{N}\) satisfies \(n_i = n_{N_1}\) for all \(i \geq N_1\). This implies \(H_i = h^{n_i} = h^{n_{N_1}} = H_{N_1}\) and consequently \(g_{N_\epsilon + i} = g_{N_\epsilon + N_1}\) for all \(i \geq N_1\). We arrive to a contradiction as above with $g_{N_{\epsilon}+N_1}$ playing the role of $g_{N_{\epsilon}}$.
This contradiction invalidates the initial assumption, proving the required subsequence exists. 

Furthermore, since \(\pi_1(\mathcal{F}^{cs})\)-leaves have cyclic fundamental groups, the non-trivial deck transformation \(h\) preserves \(S^*\), with \(h^{n_{i_m}}\) generating distinct transformations for the subsequence.

To establish the required contradiction via Lemma~\ref{unbounded}, we align notation as follows: Let \( x_{i_m} := g_{N_\epsilon + i_m}(b(z_{N_\epsilon + i_m})) \) and \( l_{i_m} := g_{N_\epsilon + i_m}(s') \), both residing entirely within the center-stable leaf \( S^* \in \widetilde{\mathcal{F}}^{cs} \). The nontrivial deck transformation \( h \) preserves \( S^* \), with distinct exponents \( \{n_{i_m}\}_{m\in\mathbb{N}} \). 

Critically, the Hausdorff leaf space of \( \widetilde{\mathcal{F}}^{cs} \) conflicts with the coexistence of:
1. The relation \( h^{n_{i_m}}(l_0) = l_{i_m} \);
2. The convergent sequence \( g_i(b(z_i)) \to b(z^*) \). Lemma~\ref{unbounded} explicitly prohibits such configurations, finalizing the contradiction. This inconsistency resolves the proposition's proof.
\end{proof}

\subsection{Quasi-geodesic stable curves}

A curve $l$ in a leaf $F \in \widetilde{\mathcal{F}}^{su}$ is called a \textit{quasi-geodesic} if there exist constants $\lambda > 1$ and $c > 0$, along with a parametrization $\chi \colon \mathbb{R} \cup \{\pm \infty\} \to F \cup \partial_{\infty}F$, satisfying:
\[
\lambda^{-1}|t - s| - c \leq d_l(\chi(t), \chi(s)) \leq \lambda|t - s| + c
\]
for all $t,s \in \mathbb{R}$, where $d_l(\chi(t), \chi(s))$ denotes the path length between $\chi(t)$ and $\chi(s)$. A family of curves is \textit{uniformly quasi-geodesic} if $\lambda$ and $c$ can be chosen independently for all curves in the family.

The Morse Lemma establishes that in hyperbolic spaces, every quasi-geodesic remains within a bounded distance from a unique geodesic with matching endpoints (see \cite{Gromov87} or \cite[Chapter III.H]{BH13metric}). This bound depends quantitatively on the quasi-geodesic parameters through a function $K(\lambda,c)$, ensuring every $(\lambda,c)$-quasi-geodesic lies within the $K(\lambda,c)$-neighborhood of its corresponding geodesic. 

An equivalent characterization requires a curve $l$ to be contained in the $K$-neighborhood of $l^*$ for the geodesic $l^*$ sharing $l$'s endpoints, where $K$ depends only on the quasi-geodesic constants. A family of curves is \textit{uniformly quasi-geodesic} if a single $K > 0$ satisfies this containment for all members simultaneously.

\begin{lem}\label{quasi-geodesic}
    For every leaf $F \in \widetilde{\mathcal{F}}^{su}$, the leaves of $\widetilde{\Lambda}^s_F$ constitute uniform quasi-geodesics. Furthermore, there exists a universal constant $K_0 > 0$, independent of $F$, such that every leaf of $\widetilde{\Lambda}^s_F$ and its corresponding geodesic sharing both ideal endpoints are mutually contained within $K_0$-neighborhoods of one another.
\end{lem}

\begin{proof}
    Lemma~\ref{uniformbound} establishes that for every leaf $F \in \widetilde{\mathcal{F}}^{su}$, there exists a universal constant $K_1 > 0$ - independent of both $F$ and $l \in \widetilde{\Lambda}^s_F$ - such that the associated geodesic $l^*$ (sharing $l$'s ideal points) satisfies $l^* \subset N_{K_1}(l)$, where $N_{K_1}(l)$ denotes the $K_1$-neighborhood of $l$. To complete the quasi-geodesic characterization, we now demonstrate the symmetric containment: There exists $K_2 > 0$, also independent of $F$ and $l$, for which $l \subset N_{K_2}(l^*)$. By the Morse Lemma, this mutual neighborhood equivalence will confirm the uniform quasi-geodesic property with $K_0 := \max\{K_1, K_2\}$.

    Assume, for contradiction, that no uniform constant \( K_2 > 0 \) exists. Then for every \( i \in \mathbb{N} \), there exist:
- A leaf \( F_i \in \widetilde{\mathcal{F}}^{su} \),
- A stable leaf \( l_i \in \widetilde{\Lambda}^s_{F_i} \),
- A geodesic \( l_i^* \subset F_i \) sharing \( l_i \)'s ideal points \( a_i, b_i \in \partial_\infty F_i \),
such that \( l_i \nsubseteq N_i(l_i^*) \). This guarantees a point \( p_i \in l_i \) where the hyperbolic metric ball \( D_i:= B(p_i, i) \subset F_i \) satisfies \( D_i \cap l_i^* = \emptyset \); see Figure~\ref{fig:K0}. Choose deck transformations \( \{g_i\} \) ensuring \( g_i(p_i) \to p_0 \) in some leaf \( L \in \widetilde{\mathcal{F}}^{su} \) as \( i \to \infty \). 

\begin{figure}[htb]	
    \centering
    \subcaptionbox{Stable leaf $l_i$ escapes uniform neighborhoods of its geodesic $l_i^*$}
    {\includegraphics{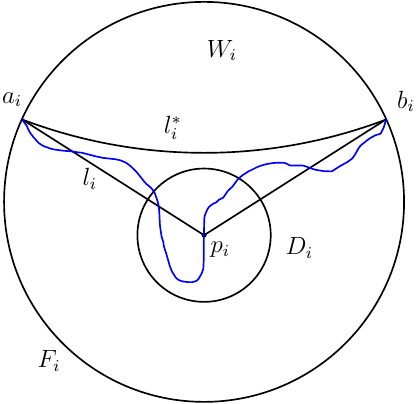}}
    \subcaptionbox{Nested ideal intervals under $V_i \cap V_j \neq \emptyset$}
    {\includegraphics{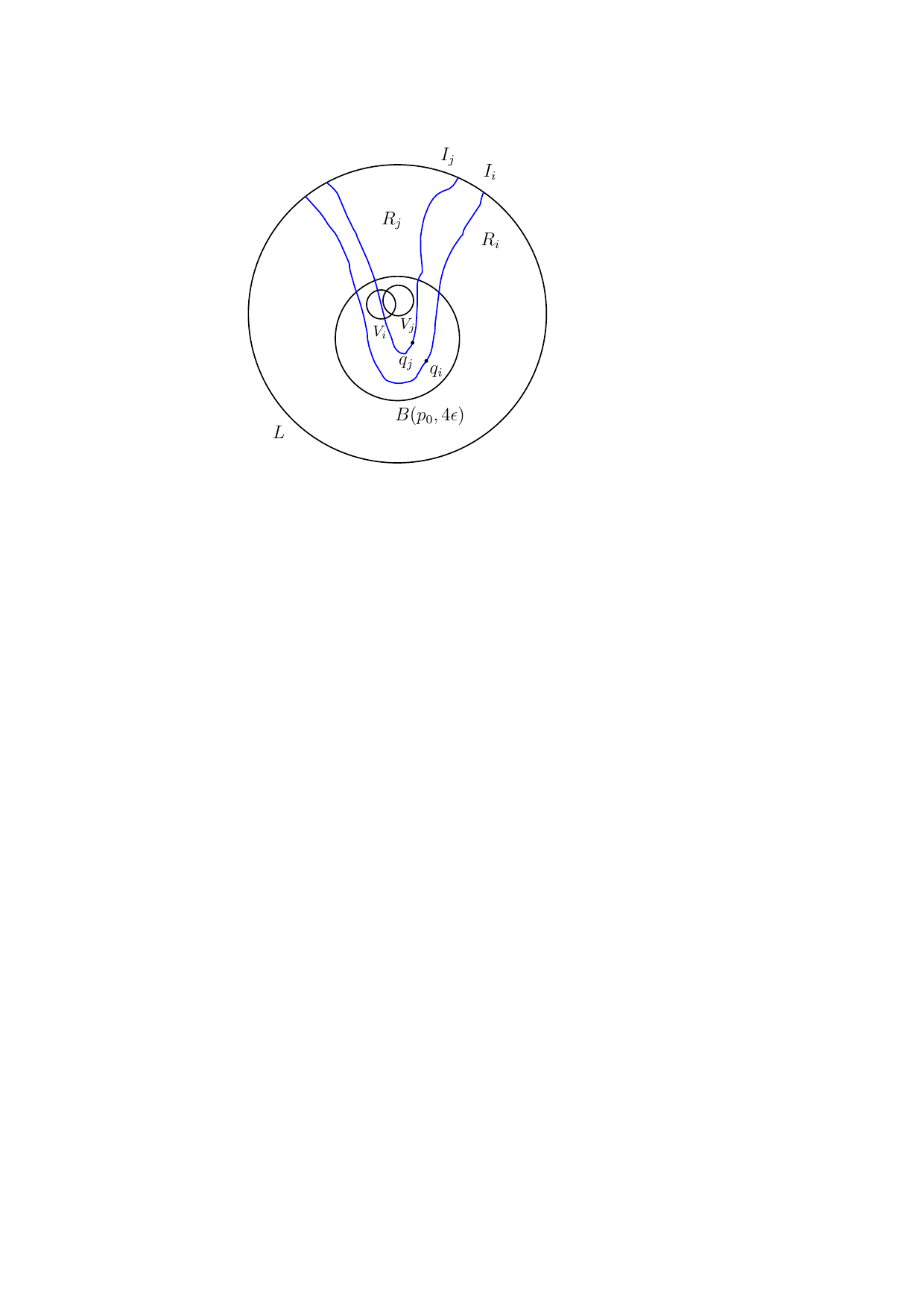}}
    \caption{Limit leaf $L$ contains stable leaf with single ideal point}
    \label{fig:K0}
\end{figure}

Denote by $l_0$ the leaf of $\widetilde{\Lambda}^s_L$ passing through $p_0$. Since deck transformations act as isometries, for every $i \in \mathbb{N}$, the geodesic $g_i(l^*_i)$ lies outside the disk $g_i(D_i)$ centered at $g_i(p_i)$ with radius $i$. Consider two geodesic rays $r_i^1$ and $r_i^2$ starting from $p_i$ and converging to the ideal points $a_i$ and $b_i$, respectively. Let $W_i$ be the wedge at $p_i$ bounded by $r_i^1$ and $r_i^2$ such that $W_i$ contains the geodesic $l^*_i$. Note that the distance between $g_i(p_i)$ and $g_i(l^*_i)$ approaches infinity as $i \to \infty$. Since $g_i(l_i^*)$ lies within the region $g_i(W_i) \setminus g_i(D_i)$ and connects the two boundary points of $g_i(W_i)$, the wedge $g_i(W_i)$ converges to a geodesic in $L$ as $i \to \infty$. 

For each $i \in \mathbb{N}$, the wedge $g_i(W_i)$ projects along the center foliation to the region $W'_i$ in $L$. Denote by $I_i \subset \partial_{\infty}L$ the ideal interval corresponding to the region $W'_i$. For each $i \in \mathbb{N}$, let $q_i$ be the intersection point of $\mathcal{\widetilde{F}}^c(g_i(p_i))$ and $L$. Denote by $s_i$ the curve in $L$ obtained by projecting $g_i(l_i)$ along the center foliation. Due to the completeness of $\mathcal{\widetilde{F}}^{cs}$, the curve $s_i$ is the leaf of $\widetilde{\Lambda}^s_L$ passing through $q_i$. As $i \to \infty$, the interval $I_i$ shrinks to an ideal point $\xi \in \partial_{\infty}L$. Without loss of generality, we assume that the convergence is monotonic.

Let $R_i \subset L \cup \partial_{\infty}L$ be the region bounded by $s_i$ and $I_i$. By the local product structure, there exists $\epsilon > 0$ such that if $s_i$ has two connected components within an $\epsilon$-ball for some $i$, then $s_i$ intersects an unstable curve at two distinct points. From Lemma~\ref{at-most-once}, we deduce that each $s_i$ contains at most one connected component within any $\epsilon$-ball.

Define $U_i := R_i \cap B(q_i, 3\epsilon)$, where $B(q_i, 3\epsilon)$ denotes the hyperbolic ball in $L$ centered at $q_i$ with radius $3\epsilon$. Since $U_i$ is connected, it contains at least one $\epsilon$-ball, denoted $V_i$. Observe that the sequence $(q_i)_{i\in\mathbb{N}}$ converges to $p_0$ as $i \to \infty$. There exists $N \in \mathbb{N}$ such that for all $i \geq N$, the point $q_i$ lies within $B(p_0, \epsilon)$. Consequently, for each $i \geq N$, both $B(q_i, 3\epsilon)$ and thus $V_i$ are contained in $B(p_0, 4\epsilon)$.

Given the compactness of $\overline{B(p_0, 4\epsilon)}$, there exist only finitely many pairwise disjoint $\epsilon$-balls within it. This implies the existence of $k \in \mathbb{N}$ such that any $(k+1)$ $\epsilon$-balls must contain at least two with non-empty intersection. We can therefore select a subsequence $N \leq i_1 < i_2 < \cdots$ where $V_{i_n} \cap V_{i_{n+1}} \neq \emptyset$ holds for every $n \in \mathbb{N}$.

For any indices \(i < j\) satisfying \(V_i \cap V_j \neq \emptyset\), it necessarily follows that \(R_i \cap R_j \neq \emptyset\). The regions \(R_i\) and \(R_j\) are respectively bounded by distinct leaves \(s_i\) and \(s_j\). Consequently, \(R_i\) and \(R_j\) must nest within one another, implying nested ideal boundaries \(I_i\) and \(I_j\). Given the monotonic decrease of the sequence \((I_i)_i\), we establish \(I_j \subset I_i\). By induction, the subsequence \((I_{i_n})_n\) consists of nested ideal intervals. The ideal point \(\xi\), defined as \(\lim_{n \to \infty} I_{i_n}\), lies within every \(I_{i_n}\). 

Observe that for each \(n \in \mathbb{N}\), the leaf \(s_{i_n}\) has two distinct ideal points forming the boundary of \(I_{i_n}\). This forces the limit leaf \(l_0\) of \((s_{i_n})_n\) to possess only one ideal point \(\xi\), directly contradicting Proposition~\ref{twodistinctiidealpoint}.

The resulting contradiction confirms the existence of the uniform constant \(K_2\). Defining \(K_0 := \max\{K_1, K_2\}\), we conclude that for every leaf \(F \in \widetilde{\mathcal{F}}^{su}\), each leaf \(l \in \widetilde{\Lambda}^s_F\) and its associated geodesic \(l^*\) are mutually contained within \(K_0\)-neighborhoods of one another.
\end{proof}

\subsection{Equivalence of Hausdorffness}

Next, we demonstrate that a one-dimensional foliation consisting of uniform quasi-geodesics on a hyperbolic plane possesses a leaf space homeomorphic to $\mathbb{R}$.

\begin{lem}\label{Hausdorff}
    The leaf space of $\widetilde{\Lambda}^s_F$ satisfies the Hausdorff separation axiom.
\end{lem}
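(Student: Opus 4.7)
The plan is to argue by contradiction and convert non-Hausdorffness of the leaf space into a configuration incompatible with the uniform quasi-geodesic control from Lemma \ref{quasi-geodesic}. Suppose the leaf space of $\widetilde{\Lambda}^s_F$ is not Hausdorff. Then there are two distinct non-separated leaves $l_1, l_2 \in \widetilde{\Lambda}^s_F$, and a sequence $l_n \in \widetilde{\Lambda}^s_F$ that accumulates simultaneously on both. Unpacking this inside the foliation and passing to a subsequence, we obtain points $p_n \in l_n$ with $p_n \to p \in l_1$ together with points $q_n \in l_n$ with $q_n \to q \in l_2$, where $p$ and $q$ are honest interior points of the hyperbolic disk $F$. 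In particular $d_F(p_n, q_n) \to d_F(p,q)<\infty$ is bounded in $n$.

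The central step is to promote this bounded ambient distance to a uniform bound on arc length along $l_n$ between $p_n$ and $q_n$. By Lemma \ref{quasi-geodesic} every $l_n$ is a uniform quasi-geodesic that stays within the $K_0$-neighborhood of its geodesic representative $l_n^*$; projecting $p_n$ and $q_n$ to $l_n^*$ gives points at hyperbolic distance at most $d_F(p_n,q_n)+2K_0$, whose connecting geodesic segment therefore has uniformly bounded length. The quasi-isometric character of the parametrization $\chi_n$ (together with the fact that $l_n$ fellow-travels $l_n^*$) then yields a constant $C>0$, depending only on $d_F(p,q)$, $K_0$, $\lambda$, $c$, such that the subarc $\alpha_n \subset l_n$ from $p_n$ to $q_n$ has arc length at most $C$ for all large $n$. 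This is the standard packaging of the Morse Lemma for uniform quasi-geodesics in the Poincar\'e disk.

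To finish, I would invoke the local continuity of the foliation $\widetilde{\Lambda}^s_F$. Parametrizing the arc $\beta \subset l_1$ of length $C$ starting at $p$ by arc length, one covers $\beta$ by finitely many foliated product charts. Since $p_n \to p$, in each such chart the nearby pieces of $l_n$ stay in arbitrarily small neighborhoods of the corresponding piece of $l_1$; stitching the charts together shows that the entire length-$C$ arc of $l_n$ starting at $p_n$ lies in a shrinking tubular neighborhood of $\beta$. In particular the endpoint $q_n$ of $\alpha_n$ converges to a point of $l_1$, so $q \in l_1$. This contradicts $q \in l_2$ and $l_1 \neq l_2$.

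The step I expect to require the most care is the passage from the definitions in Lemma \ref{quasi-geodesic} to the uniform bound $d_{l_n}(p_n,q_n)\le C$; the rest is a fairly standard foliation-continuity argument. It is also worth noting that Proposition \ref{twodistinctiidealpoint} enters only implicitly, through its essential role in the proof of Lemma \ref{quasi-geodesic}: without two distinct ideal endpoints for every leaf, the associated geodesic $l_n^*$ that underlies the Morse-type bound would not even be defined.
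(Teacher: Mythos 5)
Your argument is correct and is essentially the paper's proof run in the contrapositive direction: the paper first shows that a uniform bound on the arc length $d_{l_n}(p_n,q_n)$ would force $l_1=l_2$ by foliation continuity, concludes the arc length blows up, and then contradicts Lemma \ref{quasi-geodesic}, whereas you invoke Lemma \ref{quasi-geodesic} first to get the arc-length bound and then run the same continuity argument. The ingredients, the contradiction, and the level of rigor are the same (just take the length-$C$ arc of $l_1$ \emph{centered} at $p$, since $\alpha_n$ may leave $p_n$ in either direction).
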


\begin{proof}
    If the leaf space of $\widetilde{\Lambda}^s_F$ is not Hausdorff, there exist distinct leaves $l$ and $l'$ in $\widetilde{\Lambda}^s_F$ that are non-separated. Consider a sequence of leaves $l_n \in \widetilde{\Lambda}^s_F$ converging simultaneously to both $l$ and $l'$ as $n \to \infty$. Specifically, select points $x \in l$ and $y \in l'$ accumulated by sequences $\{x_n\}_{n\in\mathbb{N}} \subset l_n$ and $\{y_n\}_{n\in\mathbb{N}} \subset l_n$ respectively. Note that $x$ and $y$ are distinct points in $F$. Consequently, $d_F(x_n, y_n)$ remains bounded for sufficiently large $n \in \mathbb{N}$.

    Assume there exists a constant $K > 0$ such that the distance between $x_n$ and $y_n$ along $l_n$ is uniformly bounded by $K$ for all $n \in \mathbb{N}$. As $x_n \to x$, every segment of $l_n$ containing $x_n$ with uniformly bounded length converges to a segment of $l$ containing $x$. Analogously, segments containing $y_n$ converge to segments of $l'$ containing $y$. 

Consequently, the segment of $l_n$ connecting $x_n$ to $y_n$ must converge to either a segment of $l$ containing $x$ or a segment of $l'$ containing $y$. The uniform boundedness of these segments implies $x$ and $y$ lie on the same leaf of $\widetilde{\Lambda}^s_F$, with their separation along the leaf bounded by $K$. This contradicts the initial assumption that $l$ and $l'$ are distinct leaves of $\widetilde{\Lambda}^s_F$, establishing the required contradiction.

In conclusion, the distance along leaves $d_{l_n}(x_n, y_n)$ is unbounded and therefore tends to infinity. The segments of $l_n$ between $x_n$ and $y_n$ must escape every uniform neighborhood of the geodesic segments containing $x_n$ and $y_n$ as $n \to \infty$. Consequently, the leaves $l_n$ fail to be uniform quasi-geodesics. This contradiction with Lemma~\ref{quasi-geodesic} completes the proof.
\end{proof}

From the preceding analysis, we derive a rigid equivalence relation under the condition that $\widetilde{\Lambda}^s_F$ possesses densely distributed limit ideal points within $\partial_{\infty}F$ for every $F \in \widetilde{\mathcal{F}}^{su}$.

\begin{prop}\label{equivalent}
    The following statements are pairwise equivalent:
    \begin{enumerate}[(1)]
        \item The center-stable foliation $\mathcal{F}^{cs}$ is $\mathbb{R}$-covered;
        \item For every leaf $F \in \widetilde{\mathcal{F}}^{su}$, the leaf space of $\widetilde{\Lambda}^s_F$ within $F$ satisfies the Hausdorff property;
        \item For all $F \in \widetilde{\mathcal{F}}^{su}$, each leaf of $\widetilde{\Lambda}^s_F$ has two distinct ideal points in $\partial_{\infty}F$;
        \item For any $F \in \widetilde{\mathcal{F}}^{su}$, all leaves of $\widetilde{\Lambda}^s_F$ form uniform quasi-geodesics in the hyperbolic metric.
    \end{enumerate}
\end{prop}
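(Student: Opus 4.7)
The plan is to establish the cyclic chain $(2) \Rightarrow (3) \Rightarrow (4) \Rightarrow (2)$ and then close the loop with the equivalence $(1) \Leftrightarrow (2)$. Three of the implications are already essentially prepared by the lemmas of this section; what remains is to assemble them and to verify that the natural identification between the leaf spaces of $\widetilde{\mathcal{F}}^{cs}$ and of $\widetilde{\Lambda}^s_F$ is a homeomorphism.

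For $(2) \Rightarrow (3)$, I would argue by contrapositive. Proposition \ref{singlelimit} tells us that each stable ray accumulates on a single ideal point, so each leaf of $\widetilde{\Lambda}^s_F$ has either one or two ideal points. If $(3)$ fails, some leaf of $\widetilde{\Lambda}^s_F$ has its two rays sharing the same ideal point; Lemma \ref{nonHausdorffness} then directly produces non-separated leaves in $\widetilde{\Lambda}^s_F$, contradicting $(2)$. The implication $(3) \Rightarrow (4)$ is precisely Lemma \ref{quasi-geodesic}, whose proof invokes Lemma \ref{uniformbound} together with the two-ideal-point property (applied through Proposition \ref{twodistinctiidealpoint}). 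Finally, $(4) \Rightarrow (2)$ is Lemma \ref{Hausdorff}, so the cyclic part closes with no further argument.

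It remains to show $(1) \Leftrightarrow (2)$. Fix any $F \in \widetilde{\mathcal{F}}^{su}$ and define
\[
\iota_F : \mathcal{L}(\widetilde{\mathcal{F}}^{cs}) \longrightarrow \mathcal{L}(\widetilde{\Lambda}^s_F), \qquad \iota_F([\Sigma]) := [\Sigma \cap F],
\]
where $\mathcal{L}(\cdot)$ denotes the corresponding leaf space. By Corollary \ref{=1point}, each center leaf meets $F$ exactly once, so $\Sigma \cap F$ is non-empty for every $\Sigma$; by Lemma \ref{<1center} this intersection consists of a single leaf of $\widetilde{\Lambda}^s_F$. The map is surjective (every leaf of $\widetilde{\Lambda}^s_F$ is, by definition, the trace of some center-stable leaf on $F$) and injective (any common point of two traces forces the center-stable leaves to agree). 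I would then check that $\iota_F$ and $\iota_F^{-1}$ are continuous using the fact that the center flow is regulating (Proposition \ref{>1point}): a short center-arc based at a point of $F$ is simultaneously a transversal to $\widetilde{\mathcal{F}}^{cs}$ in $\widetilde{M}$ and, via the center-holonomy onto $F$, encodes a transversal to $\widetilde{\Lambda}^s_F$ in $F$, and the two transverse topologies coincide. Once $\iota_F$ is a homeomorphism, Hausdorffness transfers between the two leaf spaces, yielding $(1) \Leftrightarrow (2)$.

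The main technical care is needed for the continuity of $\iota_F^{-1}$: one must rule out that a sequence of stable leaves in $F$ converging to a single limit in $\mathcal{L}(\widetilde{\Lambda}^s_F)$ could have center-saturations accumulating on two distinct center-stable leaves in $\widetilde{M}$. This is exactly where the regulating property of $\widetilde{\mathcal{F}}^c$ is essential, since the center-saturation of a convergent family of stable leaves in $F$ is leaf-by-leaf completely determined by the unique flow line through each point, leaving no room for bifurcation transverse to $\widetilde{\mathcal{F}}^{cs}$. With this step verified, the chain $(1) \Leftrightarrow (2) \Rightarrow (3) \Rightarrow (4) \Rightarrow (2)$ closes up and all four statements are equivalent.
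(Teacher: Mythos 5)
Your proposal is correct and follows essentially the same route as the paper: the cycle $(2)\Rightarrow(3)$ via Lemma \ref{nonHausdorffness}, $(3)\Rightarrow(4)$ via Lemma \ref{quasi-geodesic}, $(4)\Rightarrow(2)$ via Lemma \ref{Hausdorff}, and $(1)\Leftrightarrow(2)$ from the regulating property of the center foliation. The only difference is that you spell out in detail the identification of the leaf space of $\widetilde{\mathcal{F}}^{cs}$ with that of $\widetilde{\Lambda}^s_F$, which the paper disposes of in a single sentence; your elaboration is a correct filling-in of that step, not a different argument.
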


\begin{proof}
    Statement (2) is equivalent to the condition that for every $F \in \mathcal{\widetilde{F}}^{su}$, the foliation $\widetilde{\Lambda}^s_F$ is $\mathbb{R}$-covered within $F$. Given that the center foliation $\mathcal{\widetilde{F}}^c$ regulates $\mathcal{\widetilde{F}}^{su}$, we conclude the equivalence of (1) and (2). Lemma~\ref{nonHausdorffness} establishes the implication (2) $\Rightarrow$ (3). Lemma~\ref{quasi-geodesic} proves (3) $\Rightarrow$ (4). By Lemma~\ref{Hausdorff}, (4) $\Rightarrow$ (2). This completes the equivalence relations among these statements.
\end{proof}

\subsection{Leafwise configuration}
We now analyze the action of deck transformations on leaves $F$ of $\mathcal{\widetilde{F}}^{su}$. For foundational insights regarding uniform quasi-geodesic foliations, we direct readers to \cite[Section 6]{BFP20collapsed}, whose framework informs our subsequent analysis. Recent developments in \cite{FP23intersection} provide general results concerning Hausdorff leaf spaces that readers may consult to bypass this subsection.

\begin{lem}\label{1/2fixedpoints}
    Every non-trivial deck transformation on $F$ preserves two distinct ideal points in $\partial_{\infty}F$.
\end{lem}

\begin{proof}
    Let \( g \) be a non-trivial deck transformation acting on \( F \). The transformation \( g \) cannot fix any leaf of \( \widetilde{\Lambda}^s_F \), as closed stable leaves are absent in \( M \). By Lemma~\ref{Hausdorff}, the leaf space of \( \widetilde{\Lambda}^s_F \) is homeomorphic to \( \mathbb{R} \), and consequently, \( g \) acts via translation on this leaf space.

    Let \( l \) be a leaf of \( \widetilde{\Lambda}^s_F \), and let \( a_i \), \( b_i \) denote the ideal points of \( g^i(l) \). By Proposition~\ref{equivalent}(3), \( a_i \) and \( b_i \) cannot coincide. However, \( a_i \) or \( b_i \) may coincide with another ideal point \( a_j \) or \( b_j \).

If \( a_i \) and \( b_i \) are not fixed by \( g \) for any \( i \in \mathbb{N} \), they converge to a common ideal point in \( \partial_{\infty}F \) as \( i \to +\infty \), which lies on the axis of \( g \). Similarly, these points converge to another ideal point on \( g \)-axis as \( i \to -\infty \). Let \( \xi \in \partial_{\infty}F \) denote the forward limit of \( a_i \) and \( b_i \), and \( \eta \in \partial_{\infty}F \) the backward limit—both fixed by \( g \). Notably, the points $\xi$ and $\eta$ could coincide with either $a_0$ or $b_0$ if \( a_i = a_j \) or \( b_i = b_j \) for some \( i \neq j \).

For any deck transformation \( g' \neq g \), the points \( \xi \) and \( \eta \) are likewise fixed by \( g' \). As \( g' \) preserves no leaves of \( \widetilde{\Lambda}^s_F \), it induces a translational action on the leaf space, which is ordered via its homeomorphism to \( \mathbb{R} \) (Lemma~\ref{Hausdorff}). For every leaf \( s \in \widetilde{\Lambda}^s_F \), each iterate \( g'^i(s) \) lies between consecutive leaves \( g^j(l) \) and \( g^{j+1}(l) \) for some \( j \in \mathbb{Z} \). Consequently, the ideal points of \( g'^i(s) \) approach \( \xi \) as \( i \to +\infty \) and \( \eta \) as \( i \to -\infty \), confirming \( \xi \) and \( \eta \) as \( g' \)-fixed points.

It is worth noting that \( \xi \neq \eta \) since any deck transformation is hyperbolic. This completes the lemma's proof.

\end{proof}

\begin{cor}\label{su-cylinder}
    For every leaf $F \in \widetilde{\mathcal{F}}^{su}$, its projection onto $M$ is either a cylinder or a plane.
\end{cor}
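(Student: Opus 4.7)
The plan is to show that the stabilizer $G := \{g \in \pi_1(M) : g(F) = F\}$ of $F$ in $\pi_1(M)$ is either trivial or isomorphic to $\mathbb{Z}$, and then to read off the topology of $\pi(F) = F/G$ using orientability. Throughout, $F$ is identified with the Poincar\'e disk, and $G$ acts on it by the M\"obius isometries coming from deck transformations; as noted at the start of Section \ref{nondenselimit}, in our setting every non-trivial such isometry is hyperbolic and therefore has a well-defined axis and translation length.

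The first step is to exploit Lemma \ref{1/2fixedpoints}. All non-trivial elements of $G$ share the same pair of fixed ideal points $\xi, \eta \in \partial_\infty F$, so each non-trivial $g \in G$ preserves the geodesic $\alpha$ joining $\xi$ to $\eta$ and acts on it as a translation by some $\lambda(g) \in \mathbb{R}$. A hyperbolic isometry of the disk with prescribed axis is determined by its translation length, hence $g \mapsto \lambda(g)$ is an injective homomorphism $G \hookrightarrow \mathbb{R}$, and in particular $G$ is abelian.

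Next I would argue that the image of $G$ in $\mathbb{R}$ is discrete. Since the deck action of $\pi_1(M)$ on $\widetilde{M}$ is free and properly discontinuous and $F$ is a properly embedded plane (Lemma \ref{embeddedplane}), the restricted action of $G$ on $F$ is also free and properly discontinuous. If $\lambda(G)$ accumulated at $0$, then pick any $x$ on the axis $\alpha$; the orbit $G \cdot x$ would accumulate at $x$, contradicting proper discontinuity. Thus $\lambda(G)$ is a discrete subgroup of $\mathbb{R}$, so $G$ is trivial or infinite cyclic.

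Finally, $\pi(F)$ is the quotient of $F \cong \mathbb{R}^2$ by the free action of $G$. If $G$ is trivial then $\pi(F)$ is a plane; if $G \cong \mathbb{Z}$ then $\pi(F)$ is a non-compact surface with infinite cyclic fundamental group, i.e.\ a cylinder or a M\"obius band, and the M\"obius case is excluded by the orientability of $E^s \oplus E^u$ assumed at the start of Section \ref{top}. The only genuinely nontrivial input is Lemma \ref{1/2fixedpoints}; everything else is the essentially formal observation that a discrete group of hyperbolic isometries sharing two fixed points at infinity must be cyclic.
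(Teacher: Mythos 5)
Your proof is correct and follows essentially the same route as the paper: both rest on Lemma \ref{1/2fixedpoints} to force all non-trivial deck transformations of $F$ to share a common axis, and then conclude that the stabilizer is trivial or infinite cyclic so that $\pi(F)$ is a plane or a cylinder. The only difference is cosmetic: you derive cyclicity directly via the injective translation-length homomorphism into $\mathbb{R}$ together with proper discontinuity, whereas the paper passes through commutativity and cites the classification of surfaces with cyclic fundamental group.
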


\begin{proof}
    Every deck transformation acting on a hyperbolic leaf $F$ constitutes a hyperbolic M\"obius transformation with two fixed ideal boundary points. Each non-trivial deck transformation on $F$ determines a unique connecting geodesic between these fixed points. By Lemma~\ref{1/2fixedpoints}, if $F$ is not a plane, there exists a unique geodesic preserved by all non-trivial deck transformations on $F$. This forces commutativity among deck transformations, endowing $F$'s projection with a cyclic fundamental group. Then such projections must be cylinders or planes.
\end{proof}

A leaf \( F \in \widetilde{\mathcal{F}}^{su} \) is called a \textit{weak quasi-geodesic fan} for \( \widetilde{\Lambda}^s_F \) if there exists an ideal point serving as a limit point for every leaf of \( \widetilde{\Lambda}^s_F \). The term "weak" allows for pairs of leaves in \( \widetilde{\Lambda}^s_F \) to share both ideal points. This weak quasi-geodesic fan structure sufficiently supports our argument; detailed discussions appear in \cite{BFP20collapsed}.

\begin{prop}\label{weakfan}
    Every leaf \( L \in \widetilde{\mathcal{F}}^{su} \) forms a weak quasi-geodesic fan.
\end{prop}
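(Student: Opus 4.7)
The plan is to split into the cases where $\pi(L)$ is a cylinder or a plane, as given by Corollary \ref{su-cylinder}. Since $M$ is not the $3$-torus (Theorem \ref{rosenberg} combined with the non-solvable assumption on $\pi_1(M)$), the minimality of $\mathcal{F}^{su}$ (Lemma \ref{1minimal}) ensures that cylinder-type leaves are dense among $\widetilde{\mathcal{F}}^{su}$-leaves. The plane case will be reduced to the cylinder case by approximating $L$ via nearby cylinder-type leaves and invoking Lemma \ref{transverse_continuity} on the continuity of stable endpoints across $\widetilde{\mathcal{F}}^{su}$-leaves to transfer the common ideal point; so I focus below on the cylinder case.

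Suppose $\pi(L)$ is a cylinder, and let $g\in\pi_1(M)$ generate $\pi_1(\pi(L))$. By Lemma \ref{1/2fixedpoints}, $g$ fixes two ideal points $\xi,\eta\in\partial_\infty L$, the attractor and repeller of the hyperbolic isometry $g$ on $L$. By Lemma \ref{Hausdorff}, the leaf space $\mathcal{H}$ of $\widetilde{\Lambda}^s_L$ is homeomorphic to $\mathbb{R}$, and since no stable leaf is closed, $g$ acts as a fixed-point-free translation on $\mathcal{H}$. Parametrize the leaves as $l_t$ for $t\in\mathbb{R}$, with ideal endpoints $a(t),b(t)\in\partial_\infty L$, distinct by Proposition \ref{twodistinctiidealpoint}, continuous in $t$ by Lemma \ref{continuity}, and $g$-equivariant: $a(t+T)=g(a(t))$, $b(t+T)=g(b(t))$, where $T>0$ is the translation length. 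The first step is to establish that $\xi$ (or $\eta$) is an endpoint of at least one leaf. Suppose, toward contradiction, that neither is ever an endpoint, so that $a(\mathcal{H}),b(\mathcal{H})\subset\partial_\infty L\setminus\{\xi,\eta\}=I^+\sqcup I^-$. Iterating $g$, the endpoints of $g^n(l_0)$ converge to $\xi$ and $\eta$ as $n\to\pm\infty$. A case analysis on how $a(\mathcal{H})$ and $b(\mathcal{H})$ distribute between $I^+$ and $I^-$, combined with the Morse-lemma bound from Lemma \ref{quasi-geodesic} and the fact that every leaf of $\mathcal{F}^s|_{\pi(L)}$ is a non-closed curve forced by Proposition \ref{twodistinctiidealpoint}, should produce a contradiction: either leaves concentrate in a bounded neighborhood of $\gamma_L$ (incompatible with foliating all of $L$), or the resulting structure yields a holonomy-invariant transverse measure to $\mathcal{F}^{su}$, which is ruled out by Proposition \ref{transversemeasure} under our hypothesis that $\pi_1(M)$ is not virtually solvable and $f$ has no periodic points.

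To upgrade ``some leaf has $\xi$ as endpoint'' to ``every leaf has $\xi$ as endpoint'', define $A=\{t\in\mathcal{H}:\xi\text{ is an endpoint of }l_t\}$. Then $A$ is closed by continuity of endpoints, $g$-invariant since $g(\xi)=\xi$, and non-empty by the first step. Let $\widetilde{B}\subset\widetilde{M}$ be the union of the center-stable leaves of $\widetilde{\mathcal{F}}^{cs}$ whose intersection with $L$ lies in $A$; using the completeness of $\mathcal{F}^{cs}$ (Proposition \ref{1complete}) and the regulating property of $\mathcal{F}^c$ (Proposition \ref{>1point}) together with Lemma \ref{transverse_continuity}, the set $\widetilde{B}$ is closed. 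Saturating by the $\pi_1(M)$-action and projecting to $M$ yields a non-empty closed $\mathcal{F}^{cs}$-saturated subset, which by the minimality of $\mathcal{F}^{cs}$ (Lemma \ref{1minimal}) must equal $M$. This forces $A=\mathcal{H}$, so $\xi$ is a common ideal endpoint of every leaf of $\widetilde{\Lambda}^s_L$, establishing that $L$ is a weak quasi-geodesic fan.

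The main obstacle will be the first step: ruling out the ``latitude'' configuration where every lifted stable leaf crosses the axis $\gamma_L$ of $g$ transversally with endpoints in opposite arcs of $\partial_\infty L\setminus\{\xi,\eta\}$. This configuration is geometrically consistent with both the Hausdorff leaf space (Lemma \ref{Hausdorff}) and uniform quasi-geodesicity (Lemma \ref{quasi-geodesic}) when viewed in isolation, so the contradiction must come from a genuinely global argument invoking the full partial hyperbolic structure on $M$, the absence of periodic points, and the non-virtually-solvable hypothesis on $\pi_1(M)$; in particular, the interaction with the transverse unstable foliation $\widetilde{\Lambda}^u_L$ and Proposition \ref{transversemeasure} should be essential in forcing the fan structure.
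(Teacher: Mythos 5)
Your outline is not a proof: the step you yourself flag as ``the main obstacle'' --- ruling out the configuration in which no leaf of $\widetilde{\Lambda}^s_L$ has the fixed point $\xi$ (or $\eta$) of the deck transformation as an ideal endpoint --- is precisely the mathematical content of the proposition in the cylinder case, and you leave it at ``a case analysis \dots should produce a contradiction.'' Nothing in Lemmas \ref{Hausdorff}, \ref{quasi-geodesic}, \ref{continuity} or \ref{1/2fixedpoints} excludes the ``latitude'' picture: the foliation of the hyperbolic plane by the geodesics orthogonal to the axis of $g$ has Hausdorff leaf space homeomorphic to $\mathbb{R}$, consists of uniform quasi-geodesics with pairwise distinct ideal points whose endpoints are dense in $I^+\sqcup I^-$, and is $g$-equivariant; so the contradiction cannot come from the two-dimensional picture in $L$ alone, and your appeal to Proposition \ref{transversemeasure} does not actually produce an invariant transverse measure for the two-dimensional foliation $\mathcal{F}^{su}$ from a configuration inside a single leaf. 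There is a second, independent problem in your upgrade step: a non-empty closed $g$-invariant subset $A$ of $\mathcal{H}\cong\mathbb{R}$ need not equal $\mathcal{H}$ (consider $\mathbb{Z}$ under translation), and minimality of $\mathcal{F}^{cs}$ does not repair this. It tells you at best that every leaf of $\widetilde{\mathcal{F}}^{cs}$ is of the form $h(S)$ with $S\cap L\in A$ and $h\in\pi_1(M)$; but $h$ does not preserve $L$, so you control the ideal point of $h(S)\cap h(L)$ in $\partial_{\infty}h(L)$ (namely $h(\xi)$), not the ideal point of $h(S)\cap L$ in $\partial_{\infty}L$, which is what you need. (Moreover $\pi(\widetilde{B})$ is the image of a closed set under the covering map and need not be closed, so minimality does not apply directly.)

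For contrast, the paper's proof avoids both difficulties by a blow-up at infinity rather than a leafwise argument. Starting from two leaves $l_1,l_2$ of $\widetilde{\Lambda}^s_F$ with distinct ideal points $a_1\neq a_2$ bounding an interval $I$ avoiding $b_1,b_2$, one takes points $x_i\to\xi$ for $\xi$ interior to $I$ and deck transformations $g_i$ with $g_i(x_i)$ convergent to a point of a leaf $L$. The complementary wedge at $g_i(x_i)$ collapses, so $g_i(\alpha)$ (with $\alpha$ the geodesic joining $a_1$ to $a_2$) converges to a single ideal point $z\in\partial_{\infty}L$, and by Lemma \ref{transverse_continuity} and completeness all stable leaves crossing $\alpha$ limit onto leaves of $\widetilde{\Lambda}^s_L$ sharing the ideal point $z$; minimality of $\mathcal{F}^{su}$ (Lemma \ref{1minimal}) then realizes every leaf of $\mathcal{\widetilde{F}}^{su}$ as such a limit. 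If you want to salvage your leafwise approach, the missing contradiction in the latitude configuration has to come from a rescaling or limiting argument of this type using the ambient three-dimensional structure, not from the static picture in a single leaf; as it stands, the core of the proposition is unproved.
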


\begin{proof}
    Given the density of $\widetilde{\Lambda}^s_F$'s limit set in $\partial_{\infty}F$, there exist two leaves $l_1, l_2 \in \widetilde{\Lambda}^s_F$ sharing no common pair of ideal points. Let $a_1, b_1$ denote the ideal points of $l_1$, and $a_2, b_2$ those of $l_2$. By quasi-geodesic property, $a_i \neq b_i$ for $i=1,2$. Assuming $a_1 \neq a_2$, they bound an ideal interval $I \subset \partial_{\infty}F$ containing neither $b_1$ nor $b_2$. The interval $I$ contains dense ideal points from $\widetilde{\Lambda}^s_F$. Notice that $b_1$ and $b_2$ may or may not coincide.

\begin{figure}[htb]	
    \centering
    \includegraphics{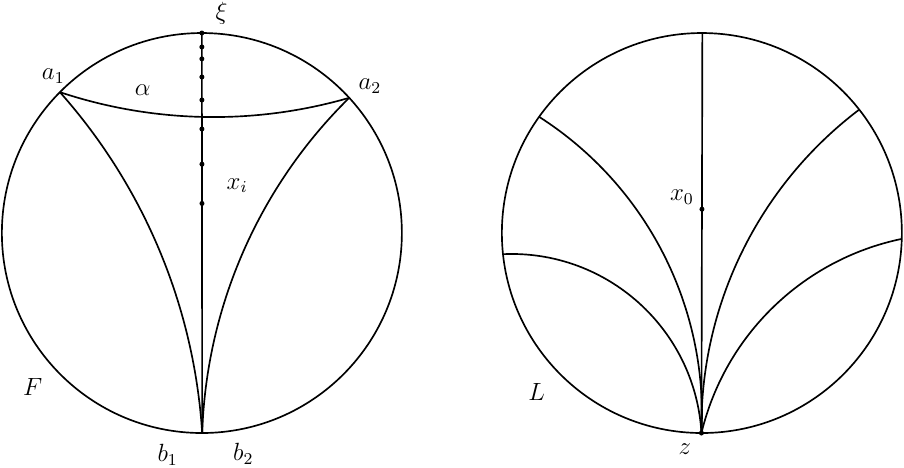}
    \caption{Construction of a weak quasi-geodesic fan}
    \label{fig:fan}
\end{figure}

    Select an ideal point $\xi \in I$. Let $\gamma$ denote the geodesic connecting $\xi$ and $b_1$ in $\partial_{\infty}F$. Consider points $x_i \in \gamma$ converging to $\xi$ as $i \to +\infty$; see Figure~\ref{fig:fan}. There exist deck transformations $\{g_i\}$ such that, after passing to a subsequence, $g_i(x_i) \to x_0$ in some leaf $L \in \widetilde{\mathcal{F}}^{su}$. 

Given $q \in L$, we choose points $q_i \in F$ satisfying $d_F(x_i, q_i) \leq C$ for all $i \in \mathbb{N}$, with $g_i(q_i) \to q$. These $q_i$ also satisfy $q_i \to \xi$ in $\partial_{\infty}F$. As $q$ approaches $\partial_{\infty}L$, the corresponding $q_i$ approach $\partial_{\infty}F$. 

By the diagonal principle, the set of limit points $x_0$ from sequences $x_i \to \xi$ in $F$ contains a minimal $\mathcal{F}^{su}$-sublamination (see Lemma~\ref{distinguished}). Since $\mathcal{F}^{su}$ is minimal (Lemma~\ref{1minimal}), every $L \in \widetilde{\mathcal{F}}^{su}$ contains such a limit point $x_0$ accumulated by $\{g_i(x_i)\}$.

Let $\alpha$ denote the geodesic connecting ideal points $a_1$ and $a_2$. Define $\beta_1$ and $\beta_2$ as the geodesics from $g_i(x_i)$ to $g_i(a_1)$ and $g_i(a_2)$ respectively. Up to neglecting a finite subsequence, all points $g_i(x_i)$ lie within the half-plane bounded by $g_i(\alpha)$. Let $W_i$ be the wedge bounded by $\beta_1$ and $\beta_2$ with boundary interval $I$, and $W^c_i$ its complementary wedge. Observe that $g_i(\alpha)$ resides within $W^c_i$. 

As $i \to +\infty$, the angle of $W_i$ at $g_i(x_i)$ increases monotonically to $2\pi$. Consequently, the wedge $W^c_i$ collapses to a geodesic ray from $x_0$, forcing $g_i(\alpha)$ to converge to an ideal point $z \in \partial_{\infty}L$.

Given the density of $\widetilde{\Lambda}^s_F$'s limit set in $I$, leaves of $\widetilde{\Lambda}^s_F$ with ideal points in $I$ must exist. Proposition~\ref{equivalent} establishes that $\widetilde{\Lambda}^s_F$ constitutes an $\mathbb{R}$-covered foliation within $F$. Consequently, any leaf of $\widetilde{\Lambda}^s_F$ containing an ideal point in $I$ necessarily intersects $\alpha$ at least once. Furthermore, these leaves converge to ideal points within the $\partial_{\infty}F$ interval bounded by $b_1$ and $b_2$—which may collapse to a single point when $b_1 = b_2$. 

As $g_i(\alpha)$ converges to $z \in \partial_{\infty}L$, each $\widetilde{\Lambda}^s_F$ leaf intersecting $\alpha$ converges to a $\widetilde{\Lambda}^s_L$ leaf terminating at $z$. Thus, every leaf $L \in \mathcal{\widetilde{F}}^{su}$ forms a weak quasi-geodesic fan.
\end{proof}

We emphasize that Proposition~\ref{weakfan}'s proof remains valid independently of periodic point existence or dynamical coherence requirements, provided the following hold:  
1. The final three equivalences in Proposition~\ref{equivalent};  
2. A leaf of $\widetilde{\mathcal{F}}^{su}$ exhibits a dense limit set in the ideal boundary.


\section{Proof of Theorem \ref{1accessible}}\label{proof}

Let \( f \) be a partially hyperbolic diffeomorphism of a compact 3-manifold \( M \) satisfying \( NW(f) = M \) with no periodic points. After possibly taking a finite lift and iterate, we assume \( M \), along with the bundles \( E^\sigma \) (\(\sigma = c, s, u\)), are orientable, and \( f \) preserves these orientations. For any lift \( g: \widehat{M} \to \widehat{M} \) of a finite iterate in a finite cover \( \widehat{M} \), we observe \( NW(g) = \widehat{M} \) by \( NW(f) = M \) (see \cite{FP_hyperbolic}). Accessibility of \( f \) follows from that of \( g \), ensuring our orientation assumptions do not affect results. 

Assuming \( \pi_1(M) \) is not virtually solvable, Theorem~\ref{maptori} precludes the existence of 2-dimensional tori tangent to \( E^s \oplus E^u \), \( E^c \oplus E^s \), or \( E^c \oplus E^u \). If \( f \) is non-accessible with \( NW(f) = M \), Corollary~\ref{1nocompact} guarantees three invariant foliations \( \mathcal{F}^{cs} \), \( \mathcal{F}^{cu} \), and \( \mathcal{F}^{su} \) devoid of compact leaves. By Lemma~\ref{1minimal}, \( \mathcal{F}^{su} \) is minimal.

By Corollary~\ref{hyperbolicleaf}, the foliation $\mathcal{F}^{su}$ comprises hyperbolic leaves. Each lifted leaf of $\widetilde{\mathcal{F}}^{su}$ is modeled as a Poincaré disk. For any leaf $F \in \widetilde{\mathcal{F}}^{su}$, let $\partial_{\infty}F$ designate its ideal boundary. As established in Proposition~\ref{>1point} and Corollary~\ref{=1point}, every center leaf of $\widetilde{\mathcal{F}}^c$ determines a flow intersecting each leaf of $\widetilde{\mathcal{F}}^{su}$ at precisely one point. Furthermore, the foliations $\widetilde{\mathcal{F}}^{cs}$ and $\widetilde{\mathcal{F}}^{cu}$ generate subfoliations $\widetilde{\Lambda}^s_F$ and $\widetilde{\Lambda}^u_F$ within every leaf $F$.

If the limit set of $\widetilde{\Lambda}^s_F$ fails to be dense in $\partial_{\infty}F$ for some $F \in \mathcal{\widetilde{F}}^{su}$, Proposition~\ref{uniform} implies $\mathcal{F}^{su}$ cannot exhibit uniformity. Under non-uniformity, Proposition~\ref{nonuniform} demonstrates that $\mathcal{F}^{su}$ constitutes the stable foliation of a flow conjugate to the suspension of an Anosov automorphism, with the fundamental group $\pi_1(M)$ being solvable.

If the limit set of $\widetilde{\Lambda}^s_F$ is dense in $\partial_{\infty}F$ for every $F \in \mathcal{\widetilde{F}}^{su}$, Proposition~\ref{twodistinctiidealpoint} ensures each leaf of $\widetilde{\Lambda}^s_F$ possesses two distinct ideal points in $\partial_{\infty}F$. By Proposition~\ref{equivalent}, this is equivalent to all leaves of $\widetilde{\Lambda}^s_F$ being uniform quasi-geodesics. Combining Proposition~\ref{quasi-geodesic} with the minimality of $\mathcal{F}^{su}$, every leaf $F \in \mathcal{\widetilde{F}}^{su}$ constitutes a weak quasi-geodesic fan. Corollary~\ref{su-cylinder} and Theorem~\ref{rosenberg} necessitate the existence of a cylindrical leaf in $\mathcal{F}^{su}$. Consider $F \in \mathcal{\widetilde{F}}^{su}$ projecting to such a cylinder. Let $h$ denote the deck transformation associated with the generator of the cylinder's fundamental group. Define $z \in \partial_{\infty}F$ as the ideal point determined by all leaves of $\widetilde{\Lambda}^s_F$. This $z$ must be fixed by $h$. Observe that any deck transformation preserving a leaf of $\mathcal{\widetilde{F}}^{su}$ acts as a hyperbolic Möbius transformation. Consequently, $h$ fixes another distinct ideal point $w \in \partial_{\infty}F$.

By Proposition~\ref{equivalent}, the leaf space of $\widetilde{\Lambda}^s_F$ in $F$ is Hausdorff. Lemma~\ref{continuity} then ensures continuous variation of $\widetilde{\Lambda}^s_F$'s ideal points. Given the dense limit points of $\widetilde{\Lambda}^s_F$ in $\partial_{\infty}F$, there exists a leaf $l \in \widetilde{\Lambda}^s_F$ containing the ideal point $w$. Consequently, $l$ possesses two distinct ideal points $w$ and $z$. Let $\gamma$ denote the $h$-invariant geodesic connecting $w$ and $z$. The leaf $l$ lies entirely within the $K_0$-neighborhood of $\gamma$, where $K_0$ is defined in Lemma~\ref{quasi-geodesic}. As $\gamma$ is $h$-invariant, the $h$-orbit of $l$ remains confined within $\gamma$'s $K_0$-neighborhood. This configuration implies the existence of an $h$-invariant leaf in the closure of this neighborhood. Such invariance would necessitate a closed stable leaf in $M$—an impossibility under our assumptions.

Hence, we finish the proof of the Theorem \ref{1accessible}.


\section{Ergodicity}\label{pf_ergodic}

In this section, we establish the proof of Theorem~\ref{ergodic}. Our work directly enhances the following foundational result:

\begin{thm}\cite{2020Seifert}\label{compactperiodiccenter}
    Let $f$ be a $C^2$ conservative partially hyperbolic diffeomorphism without periodic points on a compact 3-manifold. If $f$ admits a compact periodic center leaf, then $f$ is ergodic.
\end{thm}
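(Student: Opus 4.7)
The plan is to split the argument according to whether $\pi_1(M)$ is virtually solvable. A few preliminary reductions come first: since $f$ preserves a smooth volume, Poincar\'e recurrence gives $NW(f)=M$, so Theorem \ref{1integrable} yields unique integrability of $E^c$ and dynamical coherence. Because ergodicity is invariant under finite covers and iterates, I will freely pass to the orientable setting adopted throughout Section \ref{top}.

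In the first case, $\pi_1(M)$ not virtually solvable, I would apply Theorem \ref{1accessible} directly to obtain accessibility of $f$. Since $\dim E^c = 1$, center bunching is automatic, so the Burns--Wilkinson theorem \cite{BW10annals} yields ergodicity. In this case Theorem \ref{ergodic} is essentially a one-line consequence of the main technical result of the paper.

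In the second case, $\pi_1(M)$ virtually solvable, I would argue by contradiction. If $f$ were not ergodic, the Gan--Shi resolution of the Strong Ergodicity Conjecture in the virtually solvable case \cite{GS20DA} would produce an embedded $2$-torus $T$ tangent to $E^s\oplus E^u$. The local product structure of the strong foliations forces the $f$-orbit of $T$ to consist of finitely many pairwise disjoint or equal $su$-tori, so some iterate $f^n$ leaves $T$ invariant. The single-leaf sublamination $\{T\}$ is then $f^n$-invariant and tangent to $E^s\oplus E^u$, with $NW(f^n)=M$ still holding, so Proposition \ref{1sublamination} applied to $f^n$ furnishes dense periodic points of $f^n$ on $T$; these are periodic points of $f$, contradicting the hypothesis.

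The main obstacle is of course Theorem \ref{1accessible} itself, which is where all the real work lies. Granting that result, the proof of Theorem \ref{ergodic} is a formal combination of Theorem \ref{1accessible} with Burns--Wilkinson and with the already-established solvable case of the Strong Ergodicity Conjecture, sharpened by using the absence of periodic points to exclude $su$-tori via Proposition \ref{1sublamination}. The only bookkeeping is to verify that the orientability reduction and the passage to the iterate $f^n$ in Case 2 are compatible with the hypotheses of all cited results, which is immediate.
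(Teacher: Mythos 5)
The statement you are asked to prove is a theorem imported from \cite{2020Seifert}; the present paper gives no proof of it and instead uses it as the engine of Proposition \ref{ergodic1}. Your proposal does not engage with the theorem's content: the hypothesis that $f$ has a compact periodic center leaf is never used, and what you actually prove is the strictly stronger Theorem \ref{ergodic}. That inverts the paper's logical architecture --- in the paper, Theorem \ref{compactperiodiccenter} is the input and Theorem \ref{ergodic} the output, with Proposition \ref{ergodic1} devoted precisely to manufacturing a compact periodic center leaf in each virtually solvable case so that Theorem \ref{compactperiodiccenter} can be applied. The genuine proof of Theorem \ref{compactperiodiccenter} in \cite{2020Seifert} is a different argument altogether: the compact periodic center circle carries a $C^2$ circle diffeomorphism without periodic points, and Herman's theory of circle diffeomorphisms is what drives the ergodicity (the paper's remark after Proposition \ref{ergodic1} that the $C^2$ regularity ``is used to apply M.-R. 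Herman's theorem'' points exactly there). So as a reconstruction of the cited proof your proposal is off target.

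Taken on its own terms, your alternative route has two soft spots. First, Case 2 rests entirely on quoting the Gan--Shi resolution of the Strong Ergodicity Conjecture for virtually solvable $\pi_1(M)$ as a black box that is independent of, and at least as strong as, the theorem you are proving; if that external result is available in exactly that form, then ``no periodic points $\Rightarrow$ no $su$-torus $\Rightarrow$ ergodic'' does close the solvable case, but this must be checked against what \cite{GS20DA} and \cite{HamU14CCM} actually establish, since otherwise the argument is circular. Second, the claim that ``local product structure forces the $f$-orbit of $T$ to consist of finitely many tori'' is not right as stated: local product structure (more precisely, joint integrability of $E^s$ and $E^u$ along $T$) gives that two $su$-tori are disjoint or equal, not that there are finitely many of them; finiteness of the set of $su$-tori is a separate theorem. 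The cleaner fix is to apply Proposition \ref{1sublamination} to the closure of the orbit of $T$ inside $\Gamma(f)$, which is automatically an $f$-invariant sublamination tangent to $E^s\oplus E^u$ and whose boundary leaves then carry dense periodic points, giving the desired contradiction without any finiteness claim.
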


We employ several established theorems in our argumentation framework:

\begin{thm}\cite{BI08}\label{phlinearpart}
    For a partially hyperbolic diffeomorphism $f: M \rightarrow M$ on a compact 3-manifold $M$ with abelian fundamental group, the induced linear map $g$ on the first homology group $H_1(M)$ remains partially hyperbolic.
\end{thm}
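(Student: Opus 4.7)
The strategy is to reduce to the case $M = \mathbb{T}^3$ using the classification of closed orientable 3-manifolds with abelian fundamental group, and then to recover $g$ as the linear part of a lifted partially hyperbolic diffeomorphism of $\mathbb{R}^3$ through the asymptotic directions of the lifted strong foliations. A closed orientable 3-manifold with abelian $\pi_1$ is either $S^3$ or a lens space (in which case $\pi_1$ is finite, $H_1(M;\mathbb{R})=0$, and the statement is vacuous), or $S^2\times S^1$ (which one rules out by an elementary foliation/Euler characteristic obstruction on the $S^2$ factor, since an invariant $2$-plane bundle on a $3$-manifold factoring through $S^2$ would force a non-vanishing line field on $S^2$), or $\mathbb{T}^3$ with $\pi_1=\mathbb{Z}^3$.

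Assume therefore $M=\mathbb{T}^3$ and write $g\in GL(3,\mathbb{Z})$ for the action on $\pi_1(\mathbb{T}^3)=H_1(\mathbb{T}^3;\mathbb{Z})$. Lift $f$ to $\widetilde{f}=g+\psi:\mathbb{R}^3\to\mathbb{R}^3$, where $\psi$ is a $\mathbb{Z}^3$-periodic perturbation, and let $\widetilde{\mathcal{F}}^s,\widetilde{\mathcal{F}}^u$ be the lifts of the strong foliations. The key ingredient is Brin's quasi-isometry theorem in the nilpotent cover $\mathbb{R}^3\to\mathbb{T}^3$: the leaves of $\widetilde{\mathcal{F}}^s$ and $\widetilde{\mathcal{F}}^u$ are uniformly quasi-isometric to Euclidean lines. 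Consequently each leaf has two antipodal endpoints at infinity, producing continuous equivariant asymptotic-direction maps $v^s,v^u:\mathbb{R}^3\to\mathbb{RP}^2$.

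The remaining steps are largely formal. The relation $v^\sigma(x+n)=v^\sigma(x)$ for $n\in\mathbb{Z}^3$ makes each $v^\sigma$ factor through $\mathbb{T}^3$; connectedness of $\mathbb{T}^3$ together with the rigidity of affine directions in $\mathbb{R}^3$ forces each $v^\sigma$ to be constant. Combining this with $\widetilde{f}(x+n)=\widetilde{f}(x)+gn$ and with the $\widetilde{f}$-invariance of $\widetilde{\mathcal{F}}^\sigma$, the common value of $v^s$ (resp. $v^u$) becomes a real eigenspace of $g$. Comparing the leafwise contraction/expansion rates of $\widetilde{f}$ with the moduli of $g^n$ along these eigenspaces, one obtains an eigenvalue of modulus strictly less than $1$ on $v^s$ and strictly greater than $1$ on $v^u$. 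The third eigenvalue, corresponding to the remaining $g$-invariant direction, is forced to have modulus strictly between the other two by using the transverse dominance of the center bundle against the leafwise contraction and expansion.

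The main obstacle will be the quasi-isometry step together with its consequences: proving that in $\mathbb{R}^3$ each strong leaf stays at bounded distance from a straight Euclidean line, and then promoting the resulting continuous direction fields $v^s,v^u$ to genuinely constant fields realizing eigenspaces of $g$ with the correct strict eigenvalue inequalities. Everything else reduces to comparing norms of $g^n$ with foliation contraction/expansion, but this step genuinely requires Brin's delicate estimate exploiting the commutativity of the deck group to control the drift of $\widetilde{f}$-orbits against the periodicity of $\psi$.
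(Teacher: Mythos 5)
The paper does not prove this statement: it is imported verbatim from Burago--Ivanov \cite{BI08}, so your proposal must be measured against their argument rather than against anything in this text. For the case $M=\mathbb{T}^3$ your skeleton --- lift $f$ to $\widetilde{f}=g+\psi$, show the strong leaves drift linearly in $\mathbb{R}^3$, extract asymptotic directions, and compare leafwise contraction/expansion with $\|g^{\pm n}\|$ along the resulting invariant lines --- is indeed the route taken there and in the subsequent literature, and you have correctly located the hard analytic step. Two other parts of the proposal, however, contain genuine errors rather than omitted details.

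First, the reduction to $\mathbb{T}^3$. If $\pi_1(M)$ is finite or infinite cyclic then $H_1(M;\mathbb{R})$ has dimension $0$ or $1$, and no linear map on such a space can be ``partially hyperbolic''; the theorem is only consistent because such manifolds admit no partially hyperbolic diffeomorphism at all, and proving that is a principal result of \cite{BI08} (non-existence on $S^3$ and lens spaces requires the branching-foliation construction together with Novikov's theorem). Declaring these cases ``vacuous'' assumes exactly what must be proved. The Euler-characteristic argument offered for $S^2\times S^1$ also fails as stated: every orientable $3$-manifold is parallelizable, so $S^2\times S^1$ carries plenty of invariant-looking $2$-plane fields and line fields, and $E^s\oplus E^c$ does not ``factor through $S^2$'' a priori. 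The genuine obstruction is again foliation-theoretic: Novikov forces the (branching) $cs$-foliation to be the foliation by spheres, and only at that point does one obtain a nonvanishing line field $E^s$ on $S^2$ and a contradiction.

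Second, the constancy of $v^s,v^u$. A continuous $\mathbb{Z}^3$-periodic map $\mathbb{R}^3\to\mathbb{RP}^2$ is just a continuous map $\mathbb{T}^3\to\mathbb{RP}^2$, and such maps are very far from being constant; ``connectedness plus rigidity of affine directions'' proves nothing. What pins down a single asymptotic direction is dynamical: the equivariance $v^\sigma(\widetilde{f}(x))=[g\,v^\sigma(x)]$ makes the closure of the image a compact $g$-invariant set of directions expanded (resp.\ contracted) at a definite rate, and combined with the fact that distinct leaves of a foliation of $\mathbb{R}^3$ cannot cross, this forces the set to be a single eigendirection. Relatedly, the quasi-isometry input cannot simply be quoted for the pointwise notion of partial hyperbolicity used here: the clean statements of Brin and Brin--Burago--Ivanov are proved under absolute partial hyperbolicity, and in \cite{BI08} the linear lower bound on the displacement of strong arcs is itself derived from their Key Lemma and the branching foliations. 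In short, the two ingredients you treat as external black boxes are essentially the content of the theorem.
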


The concept of leaf conjugacy was first introduced in \cite{HPS77}. Given two diffeomorphisms $f, g: M \to M$ with respective invariant foliations $\mathcal{F}$ and $\mathcal{G}$, we say $(f, \mathcal{F})$ and $(g, \mathcal{G})$ are leaf-conjugate if there exists a homeomorphism $h: M \to M$ satisfying: $h(\mathcal{F}(x)) = \mathcal{G}(h(x))$ and $h \circ f(\mathcal{F}(x)) = g \circ h(\mathcal{F}(x))$ for all $x \in M$.

\begin{thm}\cite{Hammerlindl13}\label{on3torus}
    Every partially hyperbolic diffeomorphism on the 3-torus is leaf-conjugate to its linear part.
\end{thm}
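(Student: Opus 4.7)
The plan is to lift everything to the universal cover $\mathbb{R}^3$ of $\mathbb{T}^3$ and compare $f$ with its linear part $g$ leaf by leaf. Let $\tilde f:\mathbb{R}^3\to\mathbb{R}^3$ be a lift of $f$, and let $g$ be the linear automorphism of $\mathbb{Z}^3$ induced by $f$ on $H_1(\mathbb{T}^3)$, extended to $\mathbb{R}^3$. A standard abelian-covering argument gives $\sup_{x\in\mathbb{R}^3}\|\tilde f(x)-g(x)\|<\infty$, so $\tilde f$ and $g$ are at bounded distance at every iterate. By Theorem \ref{phlinearpart}, $g$ is itself partially hyperbolic, so it carries linear foliations $\mathcal{F}^s_g,\mathcal{F}^c_g,\mathcal{F}^u_g$ by parallel affine subspaces of $\mathbb{R}^3$. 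The first step is to use the cone-field characterization of partial hyperbolicity together with the bounded distance above to show that the lifted strong foliations $\widetilde{\mathcal{F}}^s_f,\widetilde{\mathcal{F}}^u_f$ are quasi-isometrically embedded in $\mathbb{R}^3$ and stay within uniformly bounded Hausdorff distance of the corresponding linear leaves of $\mathcal{F}^s_g,\mathcal{F}^u_g$.

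The second step is to address the a priori lack of dynamical coherence. I would invoke the branching foliations of Burago--Ivanov: for a partially hyperbolic diffeomorphism of a 3-manifold with orientable bundles, there exist $f$-invariant branching foliations $\mathcal{W}^{cs}$ and $\mathcal{W}^{cu}$ tangent to $E^s\oplus E^c$ and $E^c\oplus E^u$, each approximable by genuine foliations. Combining the quasi-isometry of step one with a graph-transform argument, one shows that every lifted branching $cs$-leaf is at uniformly bounded Hausdorff distance from a unique linear $cs$-plane of $\widetilde{\mathcal{F}}^{cs}_g$, and symmetrically on the $cu$-side. This gives a natural $\mathbb{Z}^3$-equivariant map from leaves of $\widetilde{\mathcal{W}}^{cs}$ to leaves of $\widetilde{\mathcal{F}}^{cs}_g$, and similarly for $cu$.

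The third step is to intersect $\widetilde{\mathcal{W}}^{cs}$ and $\widetilde{\mathcal{W}}^{cu}$ to obtain a branching center foliation whose leaves are at bounded distance from the linear center lines of $g$. Bounded distance plus transversality forces each branching center leaf to meet every linear strong plane of $g$ exactly once, giving a global product structure. Using this, I would build a candidate leaf conjugacy $\tilde h:\mathbb{R}^3\to\mathbb{R}^3$ by sending each branching center leaf of $\tilde f$ to the unique $g$-center line at bounded distance, parameterized along $\mathcal{F}^{su}_g$ so that $\tilde h$ is continuous, proper, and $\mathbb{Z}^3$-equivariant. Invariance of $\widetilde{\mathcal{W}}^{cs},\widetilde{\mathcal{W}}^{cu}$ under $\tilde f$ translates into $\tilde h\circ\tilde f=g\circ\tilde h$ at the level of center leaves, and $\tilde h$ descends to a homeomorphism of $\mathbb{T}^3$ realizing the desired leaf conjugacy.

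The main obstacle I expect is the potential branching of the center foliation: two center branches could a priori merge, which would destroy the injectivity needed to define $\tilde h$. The resolution comes from the rigidity provided by the linear model. Two distinct branching center leaves at bounded distance from two distinct linear center lines of $g$ cannot merge without creating a curve tangent to $E^c$ which fails to be quasi-isometric to the corresponding linear center line, contradicting the estimates from step one. Hence the branching center foliation is in fact a genuine center foliation, so $f$ is dynamically coherent, and the leaf conjugacy is well defined.
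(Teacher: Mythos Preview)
The paper does not prove this statement at all: Theorem~\ref{on3torus} is quoted verbatim from \cite{Hammerlindl13} and is used as a black box in the proof of Proposition~\ref{ergodic1}. There is therefore no ``paper's own proof'' to compare your proposal against.

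That said, your outline is broadly the correct strategy and is close to what Hammerlindl actually does. A few remarks on the sketch itself. First, the quasi-isometry of the lifted strong foliations in $\mathbb{R}^3$ is not a consequence of the cone-field characterization plus bounded distance alone; it is a nontrivial result of Brin--Burago--Ivanov that uses the abelian structure of $\mathbb{Z}^3$ in an essential way, and you should cite it rather than claim to derive it. Second, your argument for ruling out branching is too soft as stated: saying that two merged center branches would ``fail to be quasi-isometric'' to their linear shadows is not quite the mechanism. The actual argument shows that quasi-isometry of the strong foliations forces each $cs$-branching leaf and each $cu$-branching leaf to be a bounded-distance graph over the corresponding linear plane, and then one checks that two distinct branching leaves cannot have the same linear shadow because the strong leaf through a point of tangency would have to lie in both, forcing them to coincide. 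Third, the construction of $\tilde h$ is more delicate than ``parameterize along $\mathcal{F}^{su}_g$'': one typically uses an averaging or graph-transform construction to get continuity and equivariance simultaneously.

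None of these are fatal to your plan, but each is a genuine step that needs a reference or a real argument rather than a sentence.
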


The following theorem identifies the first class of closed 3-manifolds where all conservative partially hyperbolic diffeomorphisms are ergodic.
\begin{thm}\cite{2008nil}\label{nil}
    If a conservative $C^2$ partially hyperbolic diffeomorphism acts on a compact orientable 3-manifold with (virtually) nilpotent fundamental group, then it is ergodic unless the manifold is $\mathbb{T}^3$.
\end{thm}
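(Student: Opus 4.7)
The plan is to show that under the hypotheses the failure of ergodicity would force the existence of an $su$-torus, and then to observe that such a torus is incompatible with a nilpotent fundamental group on a manifold other than $\mathbb{T}^3$. In particular, the argument needs no further analysis of accessibility once one appeals to the solvable case of the Strong Ergodicity Conjecture mentioned in the introduction.

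First I would note that a nilpotent group is solvable, so $\pi_1(M)$ is solvable. Combined with the hypotheses ($C^2$, conservative, three-dimensional, partially hyperbolic), we are exactly in the setting of the Gan--Shi resolution \cite{GS20DA} of the Strong Ergodicity Conjecture for virtually solvable fundamental groups (as recalled in the introduction). That result asserts: if such an $f$ is non-ergodic, then there exists an embedded $2$-torus $T\subset M$ tangent to $E^s\oplus E^u$. So I would argue by contradiction: assume $f$ is not ergodic and obtain such an $su$-torus.

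Second, I would invoke Theorem \ref{maptori}, which classifies a 3-manifold carrying an $su$-torus as one of (i) $\mathbb{T}^3$; (ii) the mapping torus of $-\mathrm{id}\colon \mathbb{T}^2\to\mathbb{T}^2$; (iii) the mapping torus of a hyperbolic element of $SL(2,\mathbb{Z})$. Case (i) is excluded by hypothesis. To exclude (ii) and (iii) it suffices to check on purely group-theoretic grounds that neither has nilpotent fundamental group. In both cases $\pi_1(M)\cong \mathbb{Z}^2\rtimes_{\varphi}\mathbb{Z}$ with $\varphi\in GL(2,\mathbb{Z})$ having no eigenvalue equal to $1$ (namely $\varphi=-I$ in (ii), $\varphi$ hyperbolic in (iii)). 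Writing $a,b$ for generators of the $\mathbb{Z}^2$ factor and $t$ for the generator of the $\mathbb{Z}$ factor, an iterated commutator of the form $[t,[t,\ldots,[t,a]\ldots]]$ corresponds, under the natural identification, to the vector $(\varphi-I)^n a\in\mathbb{Z}^2$; since $\varphi-I$ is invertible over $\mathbb{Q}$, these vectors are nonzero for every $n$, so the lower central series of $\pi_1(M)$ does not terminate. Hence $\pi_1(M)$ is not nilpotent, contradicting the hypothesis. This contradiction rules out the existence of an $su$-torus, whence $f$ must be ergodic.

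The main obstacle of this plan is a bookkeeping one rather than a geometric one: one must be confident that the Gan--Shi theorem applies verbatim in the virtually nilpotent (hence virtually solvable) case, including the possible need to pass to a finite cover when $\pi_1(M)$ is only virtually nilpotent, and to verify that accessibility classes, $su$-tori, and the partial hyperbolicity data all pull back well under such a cover (as discussed at the beginning of Section \ref{top}). If one wished to avoid appealing to \cite{GS20DA} and produce a self-contained argument, the hard step would be to directly establish accessibility on a non-toral nilmanifold and then conclude ergodicity via Burns--Wilkinson and Hertz--Hertz--Ures (\cite{BW10annals,08invent}); establishing accessibility directly would require ruling out each alternative of Theorem \ref{1su-foliation} by analyzing taut foliations on Heisenberg-type nilmanifolds (viewed as non-trivial $S^1$-bundles over $\mathbb{T}^2$), and the $su$-foliation-without-compact-leaves alternative would be the delicate point.
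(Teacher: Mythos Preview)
The paper does not supply its own proof of this statement; Theorem \ref{nil} is quoted from \cite{2008nil} and used as a black box in the proof of Proposition \ref{ergodic1}. So there is nothing in the present paper to compare your argument against directly.

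Your proposal is nonetheless a valid derivation, but it rests on the Gan--Shi theorem \cite{GS20DA}, which postdates \cite{2008nil} and is strictly stronger (it settles the Strong Ergodicity Conjecture in the full virtually solvable setting). You are therefore proving an earlier result by invoking a later, harder one; this is logically sound but historically inverted, and one should verify that the arguments in \cite{GS20DA} do not themselves appeal to the nilpotent case of \cite{2008nil} to avoid circularity. The original proof in \cite{2008nil} follows the self-contained route you sketch at the end: on a non-toral orientable $3$-nilmanifold (a non-trivial circle bundle over $\mathbb{T}^2$) one rules out each alternative of what is here Theorem \ref{1su-foliation}, using in particular the Euler-class obstruction to the existence of a foliation transverse to the fibers, to conclude accessibility; ergodicity then follows from \cite{08invent, BW10annals}. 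Your group-theoretic check that the two non-toral mapping tori in Theorem \ref{maptori} have non-nilpotent fundamental group (via the non-vanishing of $(\varphi - I)^n$ on $\mathbb{Z}^2$) is correct.
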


We utilize the following classification modulo leaf conjugacy, which comprehensively characterizes partially hyperbolic diffeomorphisms on manifolds with virtually solvable fundamental groups.
\begin{thm}\cite{HP15}\label{sol-nil}
    Let $f: M \rightarrow M$ be a partially hyperbolic diffeomorphism. If $f$ is dynamically coherent and $\pi_1(M)$ is virtually solvable but not virtually nilpotent, then $f$ — up to a finite iterate — is leaf conjugate to the time-one map of a suspension Anosov flow.
\end{thm}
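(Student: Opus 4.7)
The plan is to use the rigidity of Sol geometry to reduce the question to an explicit model. First, because $\pi_1(M)$ is finitely generated, virtually solvable, and not virtually nilpotent, the classification of closed 3-manifold groups (via Thurston's geometrization) forces $M$ to carry a Sol geometric structure, up to finite cover. Passing to that cover and replacing $f$ by a suitable iterate of a lift, we may assume $M$ is itself the mapping torus of a hyperbolic $A \in SL(2,\mathbb{Z})$ acting on $\mathbb{T}^2$, so $\pi_1(M) = \mathbb{Z}^2 \rtimes_A \mathbb{Z}$. The target model is the standard suspension Anosov flow $\phi_t$ on this mapping torus and its time-one map $\phi_1$.

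Second, I would extract and control the invariant foliations. By dynamical coherence we have honest foliations $\mathcal{F}^{cs}$ and $\mathcal{F}^{cu}$, and by Theorem \ref{nocompactleaf} neither has compact leaves, hence both are taut; Theorem \ref{taut} then gives that each lifted leaf in $\widetilde{M} \cong \mathbb{R}^3$ is a properly embedded plane splitting $\widetilde{M}$ into two halves. Using Hammerlindl's adaptation of the Burago--Ivanov branching-foliation machinery in the solvable setting, I would show that the leaf spaces of $\widetilde{\mathcal{F}}^{cs}$ and $\widetilde{\mathcal{F}}^{cu}$ are both homeomorphic to $\mathbb{R}$, that the $\mathbb{Z}^2$-fiber subgroup of $\pi_1(M)$ acts trivially on each transverse direction while the base $\mathbb{Z}$ acts as a hyperbolic translation governed by $A$, and hence that the leaf spaces realise exactly the weak stable and weak unstable transverse structures of the lifted flow $\widetilde{\phi}_t$.

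Third, I would build the leaf conjugacy. Intersecting $\widetilde{\mathcal{F}}^{cs}$ with $\widetilde{\mathcal{F}}^{cu}$ recovers $\widetilde{\mathcal{F}}^c$ as a foliation by curves; the corresponding intersection in the model is the orbit foliation of $\widetilde{\phi}_t$. The global product structure available in the Sol cover (any weak stable leaf and any weak unstable leaf of the model meet transversally in a unique point) lets one define a $\pi_1(M)$-equivariant homeomorphism $\widetilde{h}:\widetilde{M} \to \widetilde{M}$ that sends each leaf of $\widetilde{\mathcal{F}}^c$ to an orbit of $\widetilde{\phi}_t$ and that is chosen to intertwine the first-return maps of $f$ and $\phi_1$ to a fixed complete transversal. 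After possibly a further finite iterate to correct orientations, $\widetilde{h}$ descends to the sought leaf conjugacy on $M$.

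The principal obstacle lies in the second step: forcing the lifted foliations to match those of the model and excluding exotic $\pi_1(M)$-invariant codimension-one foliations on $\widetilde{M}$. This is exactly where the non-nilpotency hypothesis becomes indispensable. In the nilpotent cases (Theorems \ref{on3torus} and \ref{nil}) skew products over hyperbolic toral automorphisms and other non-flow constructions are genuinely available, whereas in the Sol setting the algebraic rigidity of $\mathbb{Z}^2 \rtimes_A \mathbb{Z}$ --- concretely, that its abelianisation has rank one and that the commutator subgroup embeds as $\mathbb{Z}^2$ in the fibre direction --- rules these out and pins the center direction to the flow direction of $\phi_t$, yielding the desired model behaviour.
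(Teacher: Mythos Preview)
This theorem is not proved in the paper; it is quoted from \cite{HP15} (Hammerlindl--Potrie) and used as a black box in the proof of Proposition~\ref{ergodic1}. There is therefore no proof in the paper to compare your proposal against.

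That said, your outline is broadly in the spirit of the actual argument in \cite{HP15}: one reduces to a Sol mapping torus, analyses the lifted $cs$ and $cu$ foliations, and matches them with the weak foliations of the suspension flow to build the leaf conjugacy. Your acknowledgement that the second step is the crux is accurate. A few points where your sketch is loose relative to the genuine proof: the key technical input in \cite{HP15} is not so much the Burago--Ivanov branching machinery (dynamical coherence is assumed, so honest foliations exist) as a quantitative large-scale comparison showing that leaves of $\widetilde{\mathcal{F}}^{cs}$ and $\widetilde{\mathcal{F}}^{cu}$ stay at bounded distance from the corresponding linear foliations of the model; this uses polynomial growth estimates specific to the solvable geometry and is where real work happens. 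Also, the global product structure in $\widetilde{M}$ for the dynamical foliations (not just the model) has to be established rather than assumed, and this is done via the bounded-distance comparison. Your third step as written presupposes this structure without deriving it.
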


We now proceed to establish the proof of Theorem~\ref{ergodic}.

\begin{prop}\label{ergodic1}
    Let $f$ be a $C^2$ conservative partially hyperbolic diffeomorphism lacking periodic points on a compact 3-manifold. If $f$ is non-accessible, then $f$ must be ergodic.
\end{prop}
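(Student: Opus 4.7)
The plan is to assume $f$ is not accessible and reduce to Theorem~\ref{compactperiodiccenter} via a case analysis on $\pi_1(M)$. Since $f$ is conservative one has $NW(f)=M$, so Theorem~\ref{1accessible} immediately forces $\pi_1(M)$ to be virtually solvable (otherwise $f$ would already be accessible). I would then split according to whether $\pi_1(M)$ is virtually nilpotent or only virtually solvable.

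If $\pi_1(M)$ is virtually solvable but not virtually nilpotent, Theorem~\ref{sol-nil} gives that a finite iterate $f^k$ is leaf conjugate to the time-one map $\phi_1$ of a suspension Anosov flow. The Anosov base has infinitely many periodic orbits, each of which gives a compact $\phi_1$-invariant circle in the center foliation. The leaf conjugacy transfers such a circle to a compact $f^k$-invariant center leaf of $f$, and hence to a compact periodic center leaf for $f$. Theorem~\ref{compactperiodiccenter} then yields ergodicity.

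If $\pi_1(M)$ is virtually nilpotent and $M\neq \mathbb{T}^3$, Theorem~\ref{nil} provides ergodicity directly. The remaining case is $M=\mathbb{T}^3$. Here Theorem~\ref{phlinearpart} ensures that the linear part $A\in \mathrm{GL}(3,\mathbb{Z})$ is partially hyperbolic, and Theorem~\ref{on3torus} supplies a leaf conjugacy between $f$ and $A$. The crux is to show $A$ is not Anosov: were $A$ Anosov, every eigenvalue of $A^n$ would avoid $1$, so $\det(I-A^n)\neq 0$ for all $n\geq 1$, giving nonzero Lefschetz number for $A^n$ and (by homotopy invariance) for $f^n$. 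This would force $f$ to have periodic points, a contradiction. Hence the center eigenvalue of $A$ has modulus one; since all three eigenvalues of $A$ must be real (a complex conjugate pair would produce equal moduli, destroying the partially hyperbolic splitting) and integrality of $A$ forces a root of modulus one on the unit circle to be $\pm 1$, we get $\lambda^c=\pm 1$ with $\mathbb{Q}$-rational eigenspace. Consequently the center foliation of $A$ on $\mathbb{T}^3$ consists of compact circles, each $A$-periodic, and these pass through the leaf conjugacy to a compact periodic center leaf of $f$. Theorem~\ref{compactperiodiccenter} closes this case.

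The main obstacle I expect is the $\mathbb{T}^3$ subcase: leaf conjugacy does not transport periodic orbits, so one cannot simply import periodic center leaves from $A$ without first ruling out the Anosov possibility. The Lefschetz/Franks homotopy argument is what makes the linear-algebraic rigidity of $\mathrm{GL}(3,\mathbb{Z})$ bite, converting partial hyperbolicity of $A$ into a rational center eigenspace and so into the compact periodic center leaves needed to invoke Theorem~\ref{compactperiodiccenter}.
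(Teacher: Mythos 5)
Your proposal is correct and follows essentially the same route as the paper: reduce to virtually solvable $\pi_1(M)$ via Theorem~\ref{1accessible}, then split into the $\mathbb{T}^3$, solvable-not-nilpotent, and nilpotent cases, producing a compact periodic center leaf to invoke Theorem~\ref{compactperiodiccenter} (or Theorem~\ref{nil} directly). The only difference is cosmetic: where the paper simply asserts that a DA map on $\mathbb{T}^3$ has periodic points, you spell out the underlying Lefschetz-number argument and the rationality of the center eigenspace, which is a legitimate and welcome filling-in of details.
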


\begin{proof}
    Our proof strategy utilizes Theorem~\ref{compactperiodiccenter} by verifying the existence of a compact periodic center leaf.  As given by Theorem~\ref{1integrable}, the center bundle \( E^c \) of \( f \) is uniquely integrable, therefore admitting a center foliation. If \( f \) is non-accessible, Theorem~\ref{1accessible} establishes that \( \pi_1(M) \) must be virtually solvable. We now categorize our analysis according to three manifold types.

    If the manifold \( M \) is a 3-torus, its fundamental group \( \pi_1(M) = \mathbb{Z}^3 \) is abelian. By Theorem~\ref{phlinearpart}, the linear part \( g \) of \( f \) remains partially hyperbolic. This implies \( g \) possesses eigenvalues \( \lambda_1 \), \( \lambda_2 \), and \( \lambda_3 \) satisfying \( |\lambda_1| < |\lambda_2| < |\lambda_3| \) with \( |\lambda_1| < 1 < |\lambda_3| \). If \( |\lambda_2| \neq 1 \), then \( g \) constitutes a linear Anosov diffeomorphism, rendering \( f \) a DA map (diffeomorphism isotopic to Anosov). This contradicts \( f \)'s possession of periodic points, forcing \( |\lambda_2| = 1 \). The map \( g: \mathbb{T}^2 \times \mathbb{S}^1 \to \mathbb{T}^2 \times \mathbb{S}^1 \) now emerges as a skew product over a 2-toral Anosov automorphism with isometric center. Theorem~\ref{on3torus} ensures both \( g \) and \( f \) exhibit compact invariant center leaves. Thus, \( f \)'s ergodicity follows from Theorem~\ref{compactperiodiccenter}.

    If $\pi_1(M)$ is virtually solvable but not virtually nilpotent, Theorem~\ref{sol-nil} implies that an iterate of $f$ becomes leaf conjugate to the time-one map of a suspension Anosov flow. Such flows possess countably many invariant closed orbits, ensuring — through leaf conjugacy — the existence of a compact periodic center leaf for $f$. Consequently, $f$ is ergodic by Theorem~\ref{compactperiodiccenter}.
    
    The remaining case, where $M$ has a virtually nilpotent fundamental group distinct from $\mathbb{T}^3$, directly follows from Theorem~\ref{nil}.
\end{proof}

We note that the $C^2$ regularity requirement in the preceding proposition enables application of M.-R. Herman's theorem (detailed in \cite{2020Seifert}). The subsequent result however requires reduced regularity $C^{1+\alpha}$, where $\alpha$ denotes a Hölder exponent.

\begin{thm}\cite{08invent,BW10annals}\label{ergodic2}
    For any accessible $C^{1+\alpha}$ conservative partially hyperbolic diffeomorphism with one-dimensional center distribution, the K-property holds, thereby ensuring ergodicity.
\end{thm}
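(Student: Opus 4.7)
\bigskip\noindent\textbf{Proof proposal for Theorem \ref{ergodic2}.}

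The plan is to follow a Hopf-type argument, adapted to the partially hyperbolic setting via the Julienne density basis of Burns--Wilkinson. First, fix any continuous observable $\phi:M\rightarrow \mathbb{R}$ and form its forward and backward Birkhoff averages $\phi^{+}$ and $\phi^{-}$. By the Birkhoff ergodic theorem these are defined $\mu$-a.e.\ and coincide a.e., so it suffices to work with $\phi^{+}$. Since $\phi^{+}\circ f = \phi^{+}$ and $f$ uniformly contracts $\mathcal{F}^{s}$, the function $\phi^{+}$ is constant on every stable leaf; symmetrically, $\phi^{-}=\phi^{+}$ is constant on every unstable leaf. If $\phi^{+}$ were continuous, accessibility would finish the argument immediately; the issue is that $\phi^{+}$ is only $\mu$-measurable.

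The main technical step is to show that $\phi^{+}$ is \emph{essentially continuous} along $su$-paths. For this I would use absolute continuity of $\mathcal{F}^{s}$ and $\mathcal{F}^{u}$ (Pesin-type), together with the fact that $E^{c}$ is one-dimensional. In dimension one, the center-bunching inequality required by Burns--Wilkinson holds automatically: the fluctuations of $\|Df|_{E^{c}}\|$ are controlled trivially because $E^{c}$ is a line. Under this automatic bunching, one constructs the \emph{julienne} sets $J_{n}(x)$ of Burns--Wilkinson --- thin rectangular neighborhoods shrunk exponentially in the stable direction by $f$, in the unstable by $f^{-1}$, and polynomially in the center --- which form a density basis for Lebesgue measure (a Lebesgue differentiation theorem) but are subordinate to the local $su$-structure. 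One then proves that stable and unstable holonomies are \emph{julienne quasi-conformal}, so that a function which is both $s$- and $u$-invariant a.e.\ must have julienne-density-one equal to a continuous function along $su$-paths.

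Once $\phi^{+}$ agrees a.e.\ with a function that is essentially continuous on accessibility classes, accessibility of $f$ implies that $\phi^{+}$ is constant $\mu$-a.e. Varying $\phi$ over a countable dense family of continuous functions gives ergodicity. To upgrade to the K-property, I would replay the entire Hopf/julienne argument with $\phi^{+}$ replaced by an arbitrary measurable function in the Pinsker factor $\Pi(f)$: any element of $\Pi(f)$ is measurable with respect to the unstable partition (up to $\mu$-null sets), by the Ledrappier--Young style description of the Pinsker algebra for smooth systems with nonzero Lyapunov exponents, and symmetrically with respect to the stable partition after using $f^{-1}$. The julienne argument then forces every element of $\Pi(f)$ to be essentially constant on accessibility classes, and accessibility collapses $\Pi(f)$ to the trivial $\sigma$-algebra, giving the K-property.

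The hard part, and the real technical heart, will be the construction and verification of the julienne density basis and the quasi-conformality of stable/unstable holonomies with respect to it; this is where one-dimensionality of $E^{c}$ and the $C^{1+\alpha}$ regularity are both essential (the H\"older exponent enters in the distortion estimates for holonomies). Transferring the julienne density argument from ergodicity to the K-property requires the additional ingredient that Pinsker-measurable functions are leafwise constant along the appropriate foliation, which in turn uses Pesin theory --- a step that is clean in principle but demands care because in our setting the center exponent may vanish.
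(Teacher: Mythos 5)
This theorem is not proved in the paper at all --- it is quoted verbatim as a known result of Hertz--Hertz--Ures \cite{08invent} and Burns--Wilkinson \cite{BW10annals} --- and your sketch faithfully reproduces the architecture of exactly those cited proofs: the Hopf argument run through a julienne density basis, the observation that center bunching is automatic for one-dimensional $E^c$, julienne quasi-conformality of the stable/unstable holonomies, and triviality of the Pinsker algebra via essential bi-saturation to get the K-property. Your outline is correct as a roadmap (the genuine content lies in the julienne estimates and the saturation of Pinsker sets by the \emph{strong} stable/unstable foliations, which sidesteps your worry about a vanishing center exponent), so there is nothing to compare against within this paper itself.
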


The proof of Theorem~\ref{ergodic} follows directly from combining Proposition~\ref{ergodic1} with Theorem~\ref{ergodic2}.


\section*{Acknowledgements}
We extend our gratitude to Sergio R. Fenley for clarifying Thurston's theorem. We further wish to express our appreciation to Federico Rodriguez Hertz for the invitation and the Department of Mathematics at Penn State University for their hospitality during our visit, where portions of this work were written.

\bibliographystyle{alpha}
\bibliography{ref}

\end{document}